\documentclass[12pt, a4paper]{amsart}
\usepackage{amsmath}
\usepackage{geometry,amsthm,graphics,tabularx,amssymb,shapepar}
\usepackage[cmyk]{xcolor}
\usepackage{appendix}
\usepackage{amscd}
\usepackage[all]{xypic}
\usepackage{ulem}
\usepackage{bm}

\makeatletter
\newcommand*{\rom}[1]{\expandafter\@slowromancap\romannumeral #1@}
\makeatother

\newcommand{\BC}{{\mathbb {C}}}

\newcommand{\BN}{{\mathbb {N}}}

\newcommand{\BZ}{{\mathbb {Z}}}

\newcommand{\CC}{{\mathcal {C}}}

\newcommand{\CL}{{\mathcal {L}}}

\newcommand{\CP}{{\mathcal {P}}}

\newcommand{\CR}{{\mathcal {R}}}

\newcommand{\CU}{{\mathcal {U}}}

\newcommand{\rd}{{\mathrm {d}}}

\newcommand{\wt}{\widetilde}
\newcommand{\wh}{\widehat}

\newcommand{\g}{\mathfrak g}

\newcommand{\h}{\mathfrak h}

\renewcommand{\b}{\mathfrak b}

\newcommand{\n}{\mathfrak n}

\newcommand{\f}{\mathfrak f}

\renewcommand{\t}{\mathfrak t}

\newcommand{\fh}{\mathfrak h}
\newcommand{\fn}{\mathfrak n}
\newcommand{\fg}{\mathfrak g}
\newcommand{\te}{\textrm}
\newcommand{\U}{\mathcal{U}}
\newcommand{\ot}{\otimes}

\newcommand{\Z}{\mathbb{Z}}
\newcommand{\C}{\mathbb{C}}

\newcommand{\cf}{\textit{cf}.~}

\newcommand{\be}{\begin {equation}}
\newcommand{\ee}{\end {equation}}
\newcommand{\bee}{\begin {equation*}}
\newcommand{\eee}{\end {equation*}}

\theoremstyle{Theorem}

\theoremstyle{Theorem}

\newtheorem{thm}{Theorem}[section]
\newtheorem{cort}[thm]{Corollary}
\newtheorem{lemt}[thm]{Lemma}
\newtheorem{prpt}[thm]{Proposition}
\newtheorem{thmt}[thm]{Theorem}
\newtheorem{remt}[thm]{Remark}

\theoremstyle{Theorem}

\theoremstyle{Theorem}

\theoremstyle{Plain}

\theoremstyle{remark}

\theoremstyle{remark}

\theoremstyle{Definition}

\newtheorem{dfnt}[thm]{Definition}

\numberwithin{equation}{section}

 \makeatletter

 \renewcommand*{\l@section}{%
   \addvspace{0.3em\@plus\p@}%
  \@tocline{1}{0pt}{1pc}{5pc}{}%
}

 \renewcommand*{\l@subsection}{%
   \@tocline{2}{0.2em}{2pc}{5pc}{}
}
 \makeatother
\begin{document}

\title[Parabolic induction for Takiff algebras]
{Parabolic induction for Takiff algebras and character formulas in Chari's category $\widetilde{\mathcal{O}}$}
\author{Fulin Chen}
\address{School of Mathematical Sciences, Xiamen University,
 Xiamen, China 361005}
  \email{chenf@xmu.edu.cn}

\author{Xin Huang}
\address{School of Mathematics, Hefei University of Technology, Hefei, China 230009}
 \email{xinhuang@hfut.edu.cn}

\author{Ruling Zhou}
\address{School of Mathematical Sciences, Xiamen University,
 Xiamen, China 361005}
  \email{zhouruling@stu.xmu.edu.cn}

\subjclass[2020]{17B67, 17B10} \keywords{affine Lie algebras; Takiff algebras; character formulas; parabolically induced modules; Chari's category $\widetilde{\mathcal{O}}$; Harish-Chandra modules}

\begin{abstract}
Let $\fg$ be a finite-dimensional complex semisimple Lie algebra, and let $\CC_N\fg=\fg\otimes (\C[t]/t^{N+1}\C[t])$ denote the $N$-th Takiff algebra of $\fg$. We introduce a notion of parabolic subalgebras of $\CC_N\fg$ and classify them in terms of $(N+2)$-partitions of a simple root system of $\fg$. We then study the corresponding parabolically induced $\CC_N\fg$-modules and establish a necessary and sufficient criterion for their irreducibility, extending Jantzen's criterion for parabolically induced $\fg$-modules. As an application, we derive character formulas for almost all irreducible Harish-Chandra modules in Chari's category $\widetilde{\mathcal{O}}$ over the (centerless) affine Lie algebra associated with $\fg$. In particular, when $\fg=\mathfrak{sl}_3(\C)$, we obtain explicit character formulas for all irreducible Harish-Chandra modules in this category.
\end{abstract}
\maketitle

 \tableofcontents
\section{Introduction}
In the representation theory of affine Lie algebras, the standard category $\mathcal O$ \cite{K2} and Chari’s category $\widetilde{\mathcal O}$ \cite{C} provide two natural analogues of the BGG category $\mathcal O$ for finite-dimensional semisimple Lie algebras. The irreducible objects of the former are standard highest weight modules, whereas those of the latter are highest weight modules relative to the natural Borel subalgebra, often called $\ell$-highest weight modules \cite{CG}. Both families arise in the study of integrable and unitarizable affine Lie algebra modules \cite{C,CP,CP1,JK1,JK2}. More precisely, Chari proved that every irreducible integrable module with finite-dimensional weight spaces is either a standard  highest/lowest weight module, or an $\ell$-highest weight module \cite{C}. Moreover, in Jakobsen and Kac’s classification of unitarizable highest weight modules for affine Lie algebras without the degree derivation, the nonstandard families consist of highest weight modules relative to natural Borel subalgebras \cite{JK1,JK2}.

Compared with standard highest weight modules, $\ell$-highest weight modules have a more complicated structure and are less well understood. Their weight spaces may be finite- or infinite-dimensional, whereas modules in the standard category $\mathcal O$ have finite-dimensional weight spaces. Weight modules with finite-dimensional weight spaces are commonly called Harish-Chandra modules. The classification of irreducible Harish-Chandra modules at nonzero level is well established \cite{CFM,F,F1,F2,F3,FK,FT}, while the zero level  case is more intricate. An important class at zero level  consists of the irreducible Harish-Chandra modules in $\widetilde{\mathcal O}$, whose classification is known \cite{BZ,CP}. 
Explicit character formulas for these $\ell$-highest weight modules are known in few cases beyond type $A_1^{(1)}$, for which a formula was obtained in \cite{W}. By comparison, the characters of irreducible standard highest weight modules  are given by the Weyl-Kac character formula in the integrable case \cite{K1} and described by Kazhdan-Lusztig theory at noncritical level \cite{KT}.
Determining character formulas for irreducible $\ell$-highest weight modules is therefore a fundamental problem.

The main goal of this paper is to derive character formulas for irreducible Harish-Chandra modules in the category $\widetilde{\mathcal O}$ for affine Lie algebras. Chari and Pressley's loop module construction \cite{CP} reduces this problem to computing the characters of irreducible highest weight modules with finite-dimensional weight spaces over loop algebras. As observed in \cite{CGT,R,W}, the latter problem reduces further to determining the characters of irreducible highest weight modules over Takiff algebras. Takiff algebras were introduced by Takiff \cite{T}, and their representation theory has since been extensively studied \cite{W2,Ch,ChT,MaS,He}.  The category $\mathcal O$, whose simple objects are precisely the irreducible highest weight modules, plays a central role in this theory \cite{Ch,ChT,MaS}. Thus, determining the characters of these modules is also a pressing problem in its own right within the representation theory of Takiff algebras.

Let $\mathfrak g$ be a finite-dimensional complex semisimple Lie algebra, and let $N$ be a nonnegative integer. Recall that the $N$-th Takiff algebra of $\mathfrak g$ is the truncated current Lie algebra
\[
\mathcal C_N\mathfrak g
:=\mathfrak g\otimes \bigl(\mathbb C[t]/t^{N+1}\C[t]\bigr).
\]
For $N=0$, Jantzen's irreducibility criterion \cite{J} is an important tool in the character theory of highest weight modules over $\CC_N\fg=\fg$. It determines when modules induced from one-dimensional representations of parabolic subalgebras are irreducible. Since these induced modules have explicit characters given by the Poincaré-Birkhoff-Witt theorem, the criterion yields character formulas for a broad class of irreducible highest weight $\mathfrak g$-modules. In fact, these formulas apply to almost all irreducible highest weight $\mathfrak g$-modules.
This motivates us to construct a suitable family of parabolically induced $\mathcal C_N\mathfrak g$-modules, and to establish an irreducibility criterion for these induced modules.

 In Section~\ref{sec:3}, we introduce a notion of parabolic subalgebra for $\mathcal C_N\mathfrak g$ (see Definition~\ref{def:para}). Such subalgebras need not be of the form
\[
\mathfrak p\otimes\bigl(\mathbb C[t]/t^{N+1}\C[t]\bigr),
\]
where $\mathfrak p$ is a parabolic subalgebra of $\mathfrak g$. Fix a system $\Pi$ of simple roots of $\mathfrak g$. When $N=0$, the parabolic subalgebras of $\mathcal C_N\mathfrak g=\mathfrak g$ are classified, up to conjugacy, by subsets of $\Pi$  \cite{Ja}. Each subset $I\subseteq\Pi$ determines the decomposition $\Pi=I\sqcup(\Pi\setminus I)$, which we regard as a $2$-partition of $\Pi$. For general $N$, we associate an $(N+2)$-partition of $\Pi$ with each parabolic subalgebra of $\mathcal C_N\mathfrak g$ (see Definition~\ref{def:N+2}). Although a fixed $(N+2)$-partition need not determine a unique parabolic subalgebra, it determines a minimal and a maximal member of the corresponding family. This leads to a classification of parabolic subalgebras of $\mathcal C_N\mathfrak g$ (see Proposition~\ref{clasubset}).

In Section~\ref{sec:4}, we introduce a class of parabolically induced $\mathcal C_N\mathfrak g$-modules. A natural approach to determining their irreducibility is to derive determinant formulas for their Shapovalov forms. This method has proved effective for Verma modules over Takiff algebras \cite{W2} and parabolically induced modules over semisimple Lie algebras \cite{J}. In the Takiff setting, however, the more intricate structure of parabolic subalgebras makes it difficult to obtain analogous determinant formulas by directly adapting the method of \cite{W2}.

Motivated by Jantzen's work \cite{J}, we first establish a sufficient condition for the irreducibility of parabolically induced $\mathcal C_N\mathfrak g$-modules (see Theorem~\ref{th:mainirr}). This condition also restricts the possible irreducible factors of the determinants of the associated Shapovalov forms. In Subsection~\ref{sec:5.3}, we analyze the leading terms of these determinants and derive a determinant formula under suitable hypotheses (see Theorem \ref{th:det}). This formula suffices to establish an irreducibility criterion for parabolically induced $\mathcal C_N\mathfrak g$-modules (see Theorem~\ref{thm:main}). 

Our main results on $\mathcal C_N\mathfrak g$ and its representations are as follows:
\begin{itemize}
\item a classification of its parabolic subalgebras
(Proposition~\ref{clasubset});
\item a necessary and sufficient condition for the irreducibility
of the parabolically induced modules
(Theorem~\ref{thm:main}); and
\item character formulas for almost all irreducible highest weight
modules in the category $\mathcal O$
(Theorem~\ref{th:chartak}).
\end{itemize}


 
In Section~\ref{sec:6}, we study the structure of irreducible highest weight $\mathcal L(\mathfrak g)$-modules with finite-dimensional weight spaces, where $\mathcal L(\mathfrak g):=\mathfrak g\otimes\mathbb C[t,t^{-1}]$ denotes the loop algebra associated with $\mathfrak g$. Such modules are determined by certain exp-polynomial characters and can be realized as tensor products of irreducible highest weight modules over suitable Takiff algebras. Using this decomposition, we derive character formulas for a class of irreducible Harish-Chandra $\mathcal L(\mathfrak g)$-modules  (see Theorem \ref{th:char-loop}). 

In Section~\ref{sec:7}, we combine the preceding results with Chari and Pressley's loop module construction \cite{CP} to obtain character formulas for a large class of irreducible Harish-Chandra modules in Chari's category $\widetilde{\mathcal O}$ (see Theorem \ref{th:cha}). As in the $\mathfrak{sl}_2(\mathbb C)$ case \cite{W}, the resulting formulas are expressed in terms of the characters of irreducible highest weight $\mathcal L(\mathfrak g)$-modules, with coefficients involving Euler's totient function and the Möbius function.

Finally, in Section~\ref{sec:8}, we specialize to $\fg=\mathfrak{sl}_3(\C)$ and determine the maximal proper submodules of reducible parabolically induced $\CC_N\fg$-modules. Combining this description with Theorem~\ref{th:chartak}, we obtain character formulas for all irreducible highest weight $\CC_N\fg$-modules and, consequently, for all irreducible Harish-Chandra modules in $\widetilde{\mathcal O}$ for the corresponding affine Lie algebra.
The analogous character formulas for $\fg=\mathfrak{sl}_2(\C)$ were obtained in \cite{W,W2}. In this rank-one case, every irreducible highest weight $\CC_N\fg$-module is either the pullback of an irreducible Verma module over $\CC_m\fg$ for some $0\leq m\leq N$,
or the pullback of a finite-dimensional irreducible
$\fg$-module (cf.~\cite{W2}). In higher rank, determining the characters of irreducible highest weight modules requires further analysis of the submodule structure of reducible parabolically induced modules.

Throughout this paper, let $\C$, $\C^\times$, $\Z$, $\mathbb{N}$ and $\Z_+$ denote the sets of complex numbers, nonzero complex
numbers, integers, non-negative integers and positive integers, respectively. Let $\mathcal{U}(\cdot)$ denote the universal enveloping algebra of a Lie algebra. For every 
$\Gamma$-graded Lie algebra 
$\mathfrak{l}=\oplus_{\gamma\in \Gamma} \mathfrak{l}_\gamma$, write
\[
\mathrm{supp}_\Gamma(\mathfrak{l}):=\{\gamma\in \Gamma\mid \mathfrak{l}_\gamma\ne 0\}\]
for the support of $\mathfrak{l}$ in $\Gamma$, where 
$\Gamma$ is an abelian (semi-)group.

\section{Preliminaries}\label{sec:2}
In this section, we introduce some basic notation on complex semisimple Lie algebras and Takiff algebras.
\subsection{Semisimple Lie algebras}

Throughout this paper, let $\mathfrak{g}$ be a finite-dimensional complex semisimple Lie algebra and let $\fh$ be a fixed Cartan subalgebra of $\fg$. Write $\Delta\subset \h^*$ and $Q=\text{Span}_\Z\Delta$ for  the root system and the root lattice of $\mathfrak{g}$ (relative to $\mathfrak{h}$), respectively.
Then \[
\fg=\bigoplus_{\alpha\in Q}\fg_\alpha \qquad  (\fg_\alpha:=\{x\in\fg\mid [h,x]=\alpha(h)x\ \text{for all }h\in\fh\})
\]
is a $Q$-grading of $\fg$, with  $\fg_0=\fh$ and $\mathrm{supp}_Q(\fg)=\Delta\cup\{0\}$.

Fix a base $\Pi$ of $\Delta$, and  set
\[
Q^\pm=\textrm{Span}_{\BN}\pm\Pi\quad\textrm{and}\quad \Delta^\pm=\Delta\cap Q^\pm.\]
Then $\fg$ admits a triangular decomposition
\[\fg=\fn^-\oplus\fh\oplus\fn^+,\]
where $\n^\pm:=\sum_{\alpha\in \Delta^\pm}\mathfrak{g}_\alpha$. We also set 
\[\b:=\fh\oplus\fn^+,\] which is a Borel subalgebra of $\fg$.

Let $(\,\cdot\,,\,\cdot\,)$ be the  Killing form of $\fg$. 
For each $\alpha\in\Delta$, write 
\[
\alpha^\vee:=\frac{2t_\alpha}{(t_\alpha,t_\alpha)}\in\mathfrak h
\]
for the coroot of $\alpha$, where
$t_\alpha\in\mathfrak h$ is uniquely determined by 
\[
\alpha(h)=(t_\alpha,h)\quad \text{for all } h\in\mathfrak h.
\]
Fix a Chevalley anti-involution $\omega$ of $\mathfrak{g}$ such that
$\omega|_{\mathfrak{h}}=\operatorname{id}_{\mathfrak{h}}$.
For each $\alpha\in\Delta^+$, choose $x_\alpha\in\mathfrak{g}_\alpha$ such that the triple $(x_\alpha,\alpha^\vee,x_{-\alpha})$ is an $\mathfrak{sl}_2$-triple, where $x_{-\alpha}:=\omega(x_\alpha)$.

\subsection{Takiff algebras}
Let $\mathcal{R}$ be a commutative associative unital $\mathbb C$-algebra. For any Lie subalgebra $\mathfrak{a}$ of $\g$, set
\[
\mathcal{R}\mathfrak{a}:=\mathfrak{a}\otimes \mathcal{R},
\]
endowed with the Lie bracket 
\[
[x\otimes r, y\otimes s]=[x,y]\otimes rs\quad \textrm{for } x,y\in \mathfrak{a} \textrm{ and } r,s\in \mathcal{R}.
\]
Throughout this paper, we identify $\mathfrak{a}$ with the subalgebra
$\mathfrak{a}\otimes 1$ of $\mathcal{R}\mathfrak{a}$ via the embedding $x\mapsto x\otimes 1$. 

Let $N$ be a nonnegative integer, and set 
\[
\CC_N:=\C[t]/t^{N+1}\C[t].
\] 
The Lie algebra $\CC_N\mathfrak{g}$ is  called the $N$-th Takiff algebra of $\mathfrak{g}$. The natural quotient map $\C[t]\to \CC_N$ induces a surjective Lie algebra homomorphism 
\begin{equation}\label{eq:quoT}
\C[t]\mathfrak g\longrightarrow \CC_N\mathfrak g.
\end{equation}
For every $x\in\mathfrak{g}$ and $k\in \BN$, write $x(k)$ for the image of $x\otimes t^k$ under the  map \eqref{eq:quoT}. 
Note that $x(k)=0$ for $k>N$.

The natural $\BN$-grading on $\C[t]$ by degree in $t$  induces an  $\BN$-grading
on $\CC_N$ and hence   on $\CC_N\mathfrak{g}$:
\[
\CC_N\mathfrak{g}=\bigoplus_{n\in\BN} (\CC_N\mathfrak{g})_{n},
\]
where $(\CC_N\mathfrak{g})_{n}:=\{x(n)\mid x\in \mathfrak{g}\}$.
On the other hand, the adjoint action of $\fh$ on $\CC_N\fg$ induces a $Q$-grading on $\CC_N\fg$ such that 
\begin{align*}
    \CC_N\fg=\bigoplus_{\alpha\in Q}(\CC_N\fg)_\alpha,
\end{align*}
where $ (\CC_N\mathfrak{g})_\alpha := \g_{\alpha }\otimes \CC_N$.
These two gradings are compatible and hence give rise to a  $(Q\times \BN)$-grading
\[\CC_N\mathfrak{g}=\bigoplus_{(\alpha,n)\in Q\times \BN} (\CC_N\mathfrak{g})_{(\alpha,n)},\]
where $(\CC_N\mathfrak{g})_{(\alpha,n)}:=(\CC_N\fg)_\alpha\cap (\CC_N\fg)_{n}$. Explicitly, we have 
\begin{align*}
(\CC_N\mathfrak{g})_{(\alpha,n)}=
    \begin{cases}
     \{h(n)\mid h\in\fh\}, &\mbox{ if } \alpha=0,\ 0\le n\le N,\\
     \C x_{\alpha}(n), &\mbox{ if }\alpha\in\Delta,\ 0\le n\le N,\\
     0, &\mbox{ otherwise.}
    \end{cases}
\end{align*}
The support of this $(Q\times\BN)$-grading  is
\[
\widetilde{\Delta}_N:=\mathrm{supp}_{Q\times \BN}(\CC_N\mathfrak{g})=
(\Delta\cup \{0\})\times \{0,1,\dots,N\}.\]

\subsection{Highest weight $\CR\fg$-modules}
Let $V$ be a $\CR\mathfrak{g}$-module.
It is called a weight module  if it admits a weight space decomposition with respect to $\fh$:
\[
V=\bigoplus_{\lambda\in \h^*} V_\lambda\quad (V_\lambda:=\{v\in V\mid h.v=\lambda(h)v ~\text{for all}~h\in \mathfrak{h}\}).
\]
A nonzero vector $v\in V$ is called a weight vector of weight $\lambda$ if $v\in V_\lambda$, in which case we write $\textrm{wt\,}v=\lambda$.
If $V$ is a weight module, then it is called a Harish-Chandra module if  $\dim V_\lambda<\infty$ for all $\lambda\in  \h^*$.
In this case, as usual, the character of  $V$ is defined by
\begin{equation*}
\textrm{char}\,V=\sum_{\lambda\in\fh^{\ast}}\textrm{dim\,}V_{\lambda}\cdot e(\lambda),
\end{equation*}
where $e(\lambda)$ denotes the formal exponential corresponding to $\lambda$.
\begin{dfnt}
    Let $\chi\in (\CR\fh)^{\ast}$. We say that a $\CR\fg$-module $V$ is a highest weight module with highest weight $\chi$ if there exists a nonzero vector $v\in V$, called a highest weight vector, such that 
    \[
    V=\U(\CR\fg).v,\quad \CR\fn^+.v=0,\quad\text{and}\quad h.v=\chi(h)v,\ \forall\ h\in\CR\fh.
    \]
\end{dfnt}
Let $\CR'$ be another commutative associative unital $\C$-algebra and let $\eta:\CR\rightarrow \CR'$ be a unital $
\C$-algebra homomorphism. 
 Then $\eta$ induces a Lie  homomorphism 
\[
\ \CR\fg\rightarrow \CR'\fg,\quad x\otimes r\mapsto x\otimes \eta(r),
\] 
which, by abuse of notation, will also be denoted by $\eta$.
For every $\CR'\fg$-module $W$, the pullback along $\eta$ induces a $\CR\fg$-module structure on $W$ such that 
\[(x\otimes r).w= (x\otimes \eta(r)).w\quad \textrm{for }x\in\fg,~r\in\CR,~w\in W.\]
Denote by $\eta^{-1}(W)$ the resulting $\CR\fg$-module.
Similarly, for $\chi'\in (\mathcal{R}'\fh)^*$, define $\eta^{-1}(\chi')$ by
  \[\left(\eta^{-1}(\chi')\right)(h\otimes r)=\chi'(h\otimes\eta(r))\quad \textrm{for }h\in\fh,~r\in \mathcal{R}.\]

For later use, we record the following straightforward observation. 
  \begin{lemt}\label{pullback}
  Let $W$ be a highest weight $\CR'\fg$-module of highest weight $\chi'$. If $\eta$ is surjective, then $\eta^{-1}(W)$ is a highest weight $\CR\fg$-module with highest weight $\eta^{-1}(\chi')$. Moreover, if $W$ is irreducible, then $\eta^{-1}(W)$ is irreducible.
  \end{lemt}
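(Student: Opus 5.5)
The plan is to check the three defining conditions of a highest weight module directly for $\eta^{-1}(W)$, and then to compare submodules. Let $v\in W$ be a highest weight vector of weight $\chi'$. We use the same $v$, regarded as an element of $\eta^{-1}(W)$, as the candidate highest weight vector.

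\emph{Annihilation and eigenvalues.} For $x\in\fn^+$ and $r\in\CR$ we have $(x\otimes r).v=(x\otimes\eta(r)).v=0$, since $x\otimes\eta(r)\in\CR'\fn^+$. For $h\in\fh$ and $r\in\CR$ we have
\[
(h\otimes r).v=(h\otimes\eta(r)).v=\chi'(h\otimes\eta(r))v=\bigl(\eta^{-1}(\chi')\bigr)(h\otimes r)\,v .
\]
So $v$ is annihilated by $\CR\fn^+$ and is an eigenvector for $\CR\fh$ with eigenvalue $\eta^{-1}(\chi')$. This step does not use surjectivity.

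\emph{Generation.} This is the only place surjectivity of $\eta$ enters, and it is the key step. Because $\eta:\CR\to\CR'$ is surjective, the induced Lie map $\eta:\CR\fg\to\CR'\fg$ is surjective: a general element $\sum_i x_i\otimes r'_i$ of $\CR'\fg$ lifts to $\sum_i x_i\otimes r_i$ with $\eta(r_i)=r'_i$. The map then extends to a surjection $\U(\CR\fg)\to\U(\CR'\fg)$. By construction, the $\U(\CR\fg)$-action on $W$ factors through this surjection. Hence the image of $\U(\CR\fg)$ in $\mathrm{End}(W)$ equals the image of $\U(\CR'\fg)$, and therefore
\[
\U(\CR\fg).v=\U(\CR'\fg).v=W .
\]

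\emph{Irreducibility.} The same equality of images shows that a subspace of $W$ is a $\CR\fg$-submodule exactly when it is a $\CR'\fg$-submodule. If $W$ is irreducible over $\CR'\fg$, then $\eta^{-1}(W)$ has no proper nonzero submodules, so it is irreducible. The argument is routine throughout; the only nontrivial input is the surjectivity of $\U(\CR\fg)\to\U(\CR'\fg)$.
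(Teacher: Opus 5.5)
Your proof is correct and is the direct verification the paper has in mind; the paper records this lemma as a straightforward observation and writes out no proof. You check annihilation by $\CR\fn^+$ and the $\CR\fh$-eigenvalue directly, then use surjectivity of $\U(\CR\fg)\to\U(\CR'\fg)$ to obtain generation and to identify the submodules of $\eta^{-1}(W)$ with those of $W$, which supplies exactly the omitted details.
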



\section{Parabolic subalgebras of $\CC_N\fg$}\label{sec:3}
In this section, we introduce a notion of parabolic subalgebra of $\CC_N\fg$ and 
give a classification of these subalgebras.
\subsection{Parabolic subalgebras, parabolic subsets, and partitions}
Recall that a Lie subalgebra of $\mathfrak{g}$ is called parabolic if it contains a  Borel subalgebra  of $\g$.
Motivated by this notion, we introduce the following definition. 

\begin{dfnt}\label{def:para}
A Lie subalgebra $\mathcal{P}$ of $\CC_N\fg$ is called parabolic if it is $\BN$-graded and there exists a Borel subalgebra $\b'$ of $\g$ such that $\CC_N\mathfrak b' \subseteq \mathcal{P}$.
\end{dfnt}

Since all Borel subalgebras of $\fg$ are conjugate, every parabolic subalgebra of $\CC_N\fg$ is conjugate to one containing $\CC_N \b$. Hence, throughout the rest of the paper, unless otherwise stated, we restrict our attention to parabolic subalgebras containing $\CC_N\b$.

Let $\mathcal P$ be a parabolic subalgebra containing
$\CC_N\b$. 
Then  $\mathcal{P}$ is naturally 
a $(Q\times \BN)$-graded subalgebra of $\CC_N\mathfrak{g}$.
Set \[
\widetilde\Delta_{\mathcal{P}}:=\mathrm{supp}_{Q\times \BN}(\mathcal{P}).\]
It follows immediately that $\widetilde\Delta_{\mathcal{P}}$ is a parabolic subset of $\widetilde{\Delta}_N$ in the following sense: 
\begin{dfnt}\label{defparaset} A subset $S$ of $\widetilde{\Delta}_N$ is called parabolic if 
\begin{itemize}
    \item $ \widetilde{\Delta}_N^+:=
(\Delta^+\cup \{0\})\times \{0,1,\dots,N\}
 \subseteq S$, and 
    \item $S$ is closed in the sense that whenever
$(\alpha,i),(\beta,j)\in S$
and
$(\alpha,i)+(\beta,j)\in\widetilde{\Delta}_N$,
one has
$(\alpha,i)+(\beta,j)\in S$.
\end{itemize}    
\end{dfnt}
Conversely, let $S$ be a parabolic subset of $\widetilde{\Delta}_N$. Set
\begin{align*}
    \mathcal{P}_S:=\bigoplus_{(\alpha,n)\in S} (\CC_N\fg)_{(\alpha,n)}.
\end{align*}
Since $\widetilde{\Delta}_N^+\subseteq S$ and $S$ is closed, it follows that  $\mathcal{P}_S$ is a subalgebra of $\CC_N\g$ containing $\CC_N\b$, and $ \mathcal{P}_S$ is a parabolic subalgebra of $\CC_N\g$ with $\widetilde{\Delta}_{\mathcal{P}_S}=S$. Thus, the assignments above establish a one-to-one correspondence between parabolic subalgebras of $\CC_N\fg$ containing $\CC_N\b$ and parabolic subsets of $\widetilde{\Delta}_N$.
Therefore, the classification of parabolic subalgebras of $\CC_N\fg$ reduces to that of parabolic subsets of $\widetilde{\Delta}_N$.

When $N=0$, the parabolic subsets of $\widetilde{\Delta}_0$ are in one-to-one correspondence with the subsets of $\Pi$ (see \cite{Ja}), or equivalently, with the set of $2$-partitions of $\Pi$ in the sense of Definition \ref{def:N+2} below.

\begin{dfnt}\label{def:N+2}
An $(N+2)$-partition of a set $T$ is an ordered $(N+2)$-tuple
\[
(T_0,T_1,\dots,T_{N+1}),
\]
where each $T_i$ is a (possibly empty) subset of $T$, such that
\[
T=\bigcup_{i=0}^{N+1}T_i\quad \textrm{and}\quad T_j\cap T_k=\emptyset
\qquad\text{for all } 0\leq j\neq k\leq N+1.
\]
\end{dfnt}

We will show that every parabolic subalgebra of  $\CC_N\fg$ has a type determined by an $(N+2)$-partition of $\Pi$. 
Let $S$ be a parabolic subset of $\widetilde{\Delta}_N$. In what follows, we first associate  an $(N+2)$-partition of $\Pi$ to $S$. For every $\alpha\in \Delta^+$, set 
\begin{align*}
     d_S(\alpha)=\begin{cases}
         \min\{0\le n\le N\mid (-\alpha,n)\in S\},&\mbox{if }(\{-\alpha\}\times \{0,1,\dots,N\})\cap S\ne \emptyset,\\
         N+1,&\mbox{otherwise.}
     \end{cases}
\end{align*}
This defines a map 
\[
d_S :\  \Delta^+ \to \{0,1,\dots,N+1\}, \quad \alpha \mapsto d_S(\alpha).
\] 
Note that the map $d_S$ uniquely determines $S$ as 
\begin{align}\label{eq:S}
     S= \widetilde{\Delta}_N^+ \cup 
\{(-\alpha,j)\mid \alpha\in \Delta^+, ~ d_S(\alpha)\le j\le N\}.
\end{align}
The restriction of $d_S$ to $\Pi$ determines an $(N+2)$-partition 
\[\bm\Pi_S=(\Pi_{S,0},\Pi_{S,1},\dots,\Pi_{S,N+1})\]
of $\Pi$, where
\[\Pi_{S,i}=\{\alpha\in \Pi\mid d_S(\alpha)=i\}\quad \textrm{for }i=0,1,\dots,N+1.\]

For a parabolic subalgebra $\mathcal{P}$ of $\CC_N\fg$, set
\[ \bm\Pi_{\mathcal{P}} := \bm\Pi_{\widetilde{\Delta}_{\mathcal{P}}}.  \]
\begin{dfnt}
Let $\bm\Pi=(\Pi_0,\Pi_1,\dots,\Pi_{N+1})$ be an $(N+2)$-partition of $\Pi$. 
   A parabolic subset $S$ of  $\widetilde{\Delta}_N$ is said to be  of type $\bm\Pi$ if  $\bm\Pi_S = \bm\Pi$, and 
   a parabolic subalgebra $\mathcal{P}$ of $\CC_N\fg$  is said to be of type $\bm\Pi$ if  $ \bm\Pi_{\mathcal{P}} = \bm\Pi$.
\end{dfnt}

\subsection{Structure of parabolic subalgebras}
Fix an $(N+2)$-partition 
\[\bm\Pi=(\Pi_0,\Pi_1,\dots,\Pi_{N+1})\] of $\Pi$.
There is a unique homomorphism of abelian groups
\[
d_{\bm\Pi}:\ Q\rightarrow \Z,\quad \alpha\mapsto d_{\bm\Pi}(\alpha)
\]
such that
\[
d_{\bm\Pi}(\alpha)=i
\quad
\text{for every }\alpha\in\Pi_i.
\]
Set 
\begin{align*}
\mathcal{P}^{\bm\Pi}:=\CC_N\mathfrak{b}\oplus\sum_{\alpha\in \Delta^+}\sum_{j=d_{\bm\Pi}(\alpha)}^{N}\C x_{-\alpha}(j).
\end{align*}
Here and below, a sum over an empty index set is understood to be zero. Since $d_{\bm\Pi}$ is additive, one easily verifies that $\mathcal{P}^{\bm\Pi}$ is a parabolic subalgebra of $\CC_N\mathfrak{g}$ of type $\bm\Pi$ with 
\[\widetilde{\Delta}_{\mathcal{P}^{\bm\Pi}}= \widetilde{\Delta}_N^+ \cup  \{(-\alpha,j)\mid \alpha\in \Delta^+,~d_{\bm\Pi}(\alpha)\le j\le N\}. \]

For $0 \le i \le N+1$, let $\Delta_i$ and $\bar{\Delta}_i$ denote the root subsystems of $\Delta$ generated by $\Pi_i$ and $\overline{\Pi}_i:=\cup_{j=0}^i\Pi_j$, respectively. In particular, 
\[\Delta_0=\bar{\Delta}_0\quad\te{and}\quad \bar{\Delta}_{N+1}=\Delta.\]
Note that 
\begin{align*}
    \bar{\Delta}_0\subseteq \bar{\Delta}_1\subseteq\cdots\subseteq \bar\Delta_{N+1},
\end{align*}
hence\[
\hat\Delta_0 := \bar\Delta_0,\qquad
\hat\Delta_i:=\bar\Delta_i\setminus\bar\Delta_{i-1}
\quad (1 \le i\le N+1)
\] 
define an $(N+2)$-partition
\[(\hat\Delta_0,\hat\Delta_1,\dots,\hat\Delta_{N+1})\]
of $\Delta$.
Observe that 
 $\Delta_i\subseteq \hat\Delta_i$. In addition, we set 
\[
\Delta^\pm_i:=\Delta^\pm\cap \Delta_i,\quad \bar\Delta_i^\pm:=\Delta^\pm\cap \bar\Delta_i,\quad\text{and}\quad 
\hat\Delta_i^\pm:=\Delta^\pm\cap \hat\Delta_i.
\] 

Define a map 
 \[
D_{\bm\Pi}: \Delta^{+} \to \{0,1,\ldots,N+1\}\]
by letting 
\[D_{\bm\Pi}(\alpha)=i\quad \textrm{whenever }\alpha\in \hat\Delta_i^+.\]
It is straightforward to see that 
\[\mathcal{P}_{\bm\Pi}=\CC_N\mathfrak{b}\oplus\sum_{\alpha\in \Delta^+}\sum_{j=D_{\bm\Pi}(\alpha)}^{N}\C x_{-\alpha}(j)\]
is a parabolic subalgebra of $\CC_N\fg$ of type $\bm\Pi$ with
\[\widetilde{\Delta}_{\mathcal{P}_{\bm\Pi}}= \widetilde{\Delta}_N^+ \cup  \{(-\alpha,j)\mid \alpha\in \Delta^+,~D_{\bm\Pi}(\alpha)\le j\le N\}.\]

\begin{prpt}\label{clasubset}
Let $\bm\Pi$ be an $(N+2)$-partition of $\Pi$ and let $S$ be a parabolic subset of $\widetilde{\Delta}_N$ of type $\bm\Pi$. 
Then $\widetilde{\Delta}_{\mathcal{P}^{\bm\Pi}}\subseteq S\subseteq \widetilde{\Delta}_{\mathcal{P}_{\bm\Pi}}$. 
In particular, if $\mathcal{P}$ is a parabolic subalgebra of $\CC_N\fg$ of type $\bm\Pi$, then 
    \begin{align*}
        \mathcal{P}^{\bm\Pi}\subseteq \mathcal{P}\subseteq\mathcal{P}_{\bm\Pi}.
    \end{align*}
\end{prpt}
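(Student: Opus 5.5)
The plan is to translate both inclusions into inequalities for the function $d_S$ and then prove each inequality by moving through root strings using the closedness of $S$. By \eqref{eq:S} and the descriptions of $\widetilde{\Delta}_{\mathcal{P}^{\bm\Pi}}$ and $\widetilde{\Delta}_{\mathcal{P}_{\bm\Pi}}$, the claim $\widetilde{\Delta}_{\mathcal{P}^{\bm\Pi}}\subseteq S\subseteq \widetilde{\Delta}_{\mathcal{P}_{\bm\Pi}}$ is equivalent to
\[
D_{\bm\Pi}(\alpha)\le d_S(\alpha)\le d_{\bm\Pi}(\alpha)\qquad\text{for all }\alpha\in\Delta^+ ,
\]
with the convention that the right inequality is vacuous when $d_{\bm\Pi}(\alpha)\ge N+1$. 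The statement about parabolic subalgebras then follows from the bijection $S\leftrightarrow\mathcal{P}_S$. The hypothesis that $S$ has type $\bm\Pi$ says exactly that $d_S(\beta)=d_{\bm\Pi}(\beta)=i$ for $\beta\in\Pi_i$.

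First I record that $S$ is upward closed in the degree. If $(-\alpha,j)\in S$ and $j\le j'\le N$, then $(0,j'-j)\in\widetilde{\Delta}_N^+\subseteq S$. The sum $(-\alpha,j')$ lies in $\widetilde{\Delta}_N$, so it lies in $S$ by closedness.

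For the upper bound, let $\alpha\in\Delta^+$ and assume $d_{\bm\Pi}(\alpha)\le N$. I use the standard fact that $\alpha=\beta_1+\cdots+\beta_k$ with $\beta_i\in\Pi$ and every partial sum $\beta_1+\cdots+\beta_m$ a root. Each summand satisfies $d_{\bm\Pi}(\beta_i)\le d_{\bm\Pi}(\alpha)\le N$, so $(-\beta_i,d_{\bm\Pi}(\beta_i))\in S$. Adding these successively, the partial sums are $(-(\beta_1+\cdots+\beta_m),\sum_{i\le m}d_{\bm\Pi}(\beta_i))$. Each has a root as first entry and degree at most $d_{\bm\Pi}(\alpha)\le N$, so it lies in $\widetilde{\Delta}_N$. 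Closedness therefore gives $(-\alpha,d_{\bm\Pi}(\alpha))\in S$, hence $d_S(\alpha)\le d_{\bm\Pi}(\alpha)$.

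For the lower bound, let $\alpha\in\hat\Delta_i^+$ with $i\ge1$, and suppose $(-\alpha,j)\in S$; I must show $j\ge i$. Since $\alpha\notin\bar\Delta_{i-1}$, its support contains some $\gamma\in\Pi_{i'}$ with $i'\ge i$. It suffices to prove the following claim: if $(-\alpha,j)\in S$ and $\gamma\in\Pi$ lies in the support of $\alpha$, then $(-\gamma,j)\in S$. The claim gives $i'=d_S(\gamma)\le j$, so $j\ge i$.

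I prove the claim by induction on the height of $\alpha$; the case $\alpha=\gamma$ is trivial. Otherwise pick $\beta\in\Pi$ with $\alpha-\beta\in\Delta^+$. Then $(-\alpha,j)+(\beta,0)=(-(\alpha-\beta),j)\in S$ by closedness. If $\gamma$ lies in the support of $\alpha-\beta$, the induction hypothesis applies. If not, then necessarily $\beta=\gamma$, so $\alpha-\gamma$ is a positive root. In that case $(-\alpha,j)+(\alpha-\gamma,0)=(-\gamma,j)$ lies in $S$ directly.

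The only delicate point is this peeling argument, which must retain the chosen simple root $\gamma$ in the support. The case split above is designed to handle that, and everything else is routine bookkeeping.
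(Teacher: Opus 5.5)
Your proof is correct and follows essentially the same route as the paper. Both reduce the statement to $D_{\bm\Pi}(\alpha)\le d_S(\alpha)\le d_{\bm\Pi}(\alpha)$, prove the upper bound with the same partial-sum chain, and prove the lower bound by induction on height using the two closedness moves $(-\alpha,j)+(\beta,0)$ and $(-\alpha,j)+(\alpha-\beta,0)$. The only difference is organizational: you isolate the claim $d_S(\gamma)\le d_S(\alpha)$ for every simple $\gamma$ in the support of $\alpha$ and track a fixed maximizing $\gamma$, whereas the paper tracks $D_{\bm\Pi}(\alpha)$ through a case split on the set $A$, so your version is slightly cleaner.
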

\begin{proof}
Let $\alpha\in\Delta^{+}$. By definition, it suffices to prove the inequalities
\[
d_S(\alpha)\leq d_{\bm\Pi}(\alpha) \quad\te{if}\ d_{\bm\Pi}(\alpha) \le N,\]
and 
\[
D_{\bm\Pi}(\alpha) \leq d_S(\alpha) \quad\te{if}\ d_S(\alpha) \le N.\]

Choose simple roots $
\alpha_1,\alpha_2,\dots,\alpha_k\in\Pi$
such that
\[
\alpha=\alpha_1+\cdots+\alpha_k
\]
and every partial sum
\[
\alpha_1+\alpha_2+\cdots+\alpha_t\in\Delta^+
\quad\textrm{for }
1\le t\le k.
\]
Then 
\begin{align}\label{eq:dpialpha}
  d_{\bm\Pi}(\alpha)=d_{\bm\Pi}(\alpha_1)+d_{\bm\Pi}(\alpha_2)+\dots+d_{\bm\Pi}(\alpha_k).  
\end{align}
For the first inequality, since $S$ is of type $\bm\Pi$, we have  
\begin{align*}
    (-\alpha_j, d_{\bm\Pi}(\alpha_j))=(-\alpha_j, d_{S}(\alpha_j))\in S \quad ( 1 \le j \le k).
\end{align*}
Using the closedness of $S$ successively along the chosen partial sums, we obtain
\begin{align*}
    (-\alpha,d_{\bm\Pi}(\alpha))=(-\alpha_1,d_{\bm\Pi}(\alpha_1))+(-\alpha_2,d_{\bm\Pi}(\alpha_2))+\dots+(-\alpha_k,d_{\bm\Pi}(\alpha_k))\in S.
\end{align*}
This gives $d_S(\alpha)\leq d_{\bm\Pi}(\alpha)$.

It remains to show that $D_{\bm\Pi}(\alpha) \leq   d_S(\alpha)$. We prove it by induction on $k$. The case that $k=1$ is straightforward. Assume that $k>1$.
Set
\begin{align*}
A=\{\alpha_j\mid 1\leq j\leq k,~\alpha-\alpha_j\in \Delta\}.
\end{align*}
Clearly, $A\neq \emptyset$. For every $\beta\in A$, one has \begin{align*}
    (-\beta, d_S(\alpha))=(-\alpha, d_S(\alpha))+(\alpha-\beta,0)\in S.
\end{align*}
It follows that $d_S(\beta)\leq d_S(\alpha)$. Then the assertion holds if 
there exists $\alpha_j\in A$ such that 
$d_S(\alpha_j)=D_{\bm\Pi}(\alpha)$. 

Otherwise, we have
\begin{align}\label{<}
    d_S(\beta)<D_{\bm\Pi} (\alpha)\quad \te{for every }\beta\in A
\end{align}
by noting that 
\begin{align}\label{eqmax}
  D_{\bm\Pi}(\alpha)=\textrm{max}\{D_{\bm\Pi}(\alpha_j)\mid j=1,2,\dots,k\}=\textrm{max}\{d_{S}(\alpha_j)\mid j=1,2,\dots,k\}.  
\end{align}
Fix a $\beta\in A$. Since $S$ is closed,  we obtain
\[(-\alpha+\beta,d_S(\alpha))=(-\alpha,d_S(\alpha))+(\beta,0)\in S.\]
Then $d_{S}(\alpha-\beta)\leq d_S(\alpha)$.
On the other hand, it follows from \eqref{<} and \eqref{eqmax} that 
\[D_{\bm\Pi}(\alpha)=D_{\bm\Pi}(\alpha-\beta).\]
The induction hypothesis yields that $D_{\bm\Pi}(\alpha-\beta)\leq d_S(\alpha-\beta)$, and then the assertion follows immediately.
\end{proof}
\begin{remt}\label{remdec}
For every parabolic subalgebra $\mathcal{P}$ of $\CC_N\fg$ of type $\bm\Pi$ with support $S=\widetilde\Delta_{\mathcal{P}}$, there is a direct sum decomposition of $\CC_N\fg$ into vector subspaces:
\begin{align}\label{decpara}
\CC_N\fg=\fn_{\bm\Pi,S}^-\oplus \fg_{\bm\Pi,S}\oplus \fn_{\bm\Pi,S}^+
\end{align}
with $\mathcal{P}=\fg_{\bm\Pi,S}\oplus \fn_{\bm\Pi,S}^+$, where
\[
\fg_{\bm\Pi,S}
=\CC_N\fh
\oplus
\sum_{\alpha\in\Delta^+}\sum_{j=d_S(\alpha)}^N
\bigl(\C x_\alpha(j)+\C x_{-\alpha}(j)\bigr),\quad \fn_{\bm\Pi,S}^{\pm}
=
\sum_{\alpha\in\Delta^+}\sum_{j=0}^{d_S(\alpha)-1}
\C x_{\pm\alpha}(j).
\]
When $\mathcal{P}=\mathcal{P}^{\bm\Pi}$, the additivity of $d_{\bm\Pi}$ implies that all three summands in \eqref{decpara} are subalgebras of $\CC_N\fg$. On the other hand, when $\mathcal{P}=\mathcal{P}_{\bm\Pi}$, we have $d_S=D_{\bm\Pi}$. In this case, $\fg_{\bm\Pi,S}$ is a subalgebra of $\CC_N\fg$, whereas $\fn_{\bm\Pi,S}^{\pm}$ are not subalgebras in general. Indeed,  if $\alpha\in\hat\Delta_m^\pm$ and $\beta\in\hat\Delta_n^\pm$ satisfy $\alpha+\beta\in\Delta$ for some $m,n>1$ and $m+n\leq N+2$, then
\[
[x_{\alpha}(m-1),x_{\beta}(n-1)]\notin \fn_{\bm\Pi,S}^{\pm}.
\]
\end{remt}


\section{Parabolically induced modules over Takiff algebras}\label{sec:4}
In this section, we fix an $(N+2)$-partition $\bm\Pi$ of $\Pi$ and a parabolic subalgebra $\CP$ of $\CC_N\mathfrak{g}$ of type $\bm\Pi$. We introduce parabolically induced $\CC_N\mathfrak{g}$-modules associated with $\CP$ and study their irreducibility.

\subsection{Induced modules}
  Let $\f$ be a complex Lie algebra and $\t$ a subalgebra of $\f$.
  For a $\t$-module $V$, define the induced $\f$-module by
  \[
\mathrm{Ind}_{\t}^{\f}\, V=\mathcal{U}(\f)\otimes_{\CU(\t)} V.
\]
In this paper, we only consider the case where $V$ is one-dimensional.

A character of $\mathfrak{t}$ is a linear functional $\chi$ on  $\mathfrak{t}$ such that $\chi([\mathfrak{t},\mathfrak{t}])=0$. For every  character $\chi$ of $\t$, there is a one-dimensional $\t$-module \[
\BC_\chi=\BC v_\chi
\] such that 
\[av_\chi=\chi(a) v_\chi\quad \te{ for all } a\in \t.\] 
Conversely,  every one-dimensional $\t$-module is of this form, up to isomorphism.

Consider the following direct sum decomposition of $\CC_N\fh$:
\begin{align}\label{hpi}
    \CC_N\fh=\mathfrak{h}_{\bm\Pi}\oplus \h_{\bm\Pi}^\perp.
\end{align}
 where \[
\mathfrak{h}_{\bm\Pi}:=\sum_{i=1}^{N+1}\sum_{\alpha\in \Pi_i}\sum_{j=0}^{i-1}  \BC \alpha^\vee(j)\quad\te{and}\quad \h_{\bm\Pi}^\perp : =  \sum_{i=0}^{N+1}\sum_{\alpha\in \Pi_i}\sum_{j=i}^{N}  \BC \alpha^\vee(j).\]

\begin{lemt}\label{le:pp}
Let $\CP$ be a parabolic subalgebra of $\CC_N\g$ of type $\bm\Pi$. Then there is a linear isomorphism 
    \[
\mathcal{P}/[\mathcal{P},\mathcal{P}]\cong \mathfrak{h}_{\bm\Pi}.
\]
\end{lemt}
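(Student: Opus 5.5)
The plan is to compute $[\CP,\CP]$ explicitly as a $(Q\times\BN)$-graded subspace and show
\[
[\CP,\CP]=\h_{\bm\Pi}^\perp\oplus\bigoplus_{(\alpha,n)\in S,\ \alpha\neq 0}(\CC_N\fg)_{(\alpha,n)},
\]
where $S=\widetilde\Delta_{\CP}$. Since the simple coroots form a basis of $\fh$, the decomposition \eqref{hpi} is direct and $\CC_N\fh=\h_{\bm\Pi}\oplus\h_{\bm\Pi}^\perp$. Granting the display, $\CP/[\CP,\CP]\cong\CC_N\fh/\h_{\bm\Pi}^\perp\cong\h_{\bm\Pi}$ as vector spaces.

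\emph{Root parts.} For any $(\pm\alpha,j)\in S$ with $\alpha\in\Delta^+$, pick $h\in\fh$ with $\alpha(h)\neq0$. Then $[h(0),x_{\pm\alpha}(j)]=\pm\alpha(h)x_{\pm\alpha}(j)$. Since $h(0)\in\CC_N\b\subseteq\CP$, every root vector of $\CP$ lies in $[\CP,\CP]$.

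\emph{Cartan part.} Since $[\CP,\CP]$ is graded, its degree-$0$ $Q$-component is spanned by brackets of homogeneous elements of opposite $Q$-degree. Brackets of two Cartan elements vanish, since $\CC_N\fh$ is abelian. The remaining brackets are $[x_\alpha(i),x_{-\alpha}(j)]$ with $\alpha\in\Delta^+$, $i\ge0$ and $(-\alpha,j)\in S$, that is, $j\ge d_S(\alpha)$. By the $\mathfrak{sl}_2$-triple normalization this bracket equals $\alpha^\vee(i+j)$. Hence the Cartan part of $[\CP,\CP]$ is
\[
H_S:=\mathrm{span}\{\alpha^\vee(k)\mid \alpha\in\Delta^+,\ d_S(\alpha)\le k\le N\}.
\]
For $\alpha\in\Pi_i$ we have $d_S(\alpha)=i$, so $\h_{\bm\Pi}^\perp\subseteq H_S$.

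\emph{Reverse inclusion.} This is the only nontrivial step. Write $\alpha^\vee=\sum_{\beta\in\Pi}c_\beta\beta^\vee$ with $c_\beta\ge0$, where $c_\beta\neq0$ only for $\beta$ in the support of $\alpha$. By Proposition~\ref{clasubset}, and specifically the inequality $D_{\bm\Pi}(\alpha)\le d_S(\alpha)$ established in its proof, together with \eqref{eqmax}, we get
\[
d_S(\alpha)\ge D_{\bm\Pi}(\alpha)=\max\{d_{\bm\Pi}(\beta)\mid \beta \text{ in the support of }\alpha\}.
\]
Thus for $k\ge d_S(\alpha)$ and every $\beta$ in the support of $\alpha$, we have $k\ge d_{\bm\Pi}(\beta)$. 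Hence each $\beta^\vee(k)$ lies in $\h_{\bm\Pi}^\perp$, and so $\alpha^\vee(k)\in\h_{\bm\Pi}^\perp$. This gives $H_S=\h_{\bm\Pi}^\perp$.

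Together with the root-part step, this establishes the displayed description of $[\CP,\CP]$, and the lemma follows.
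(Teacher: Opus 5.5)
Your proof is correct and follows the same route as the paper. Both arguments observe that every root vector of $\CP$ lies in $[\fh,\CP]\subseteq[\CP,\CP]$, which reduces the lemma to the identity $[\CP,\CP]\cap\CC_N\fh=\mathfrak{h}_{\bm\Pi}^\perp$; the paper merely asserts this identity is easily verified, and your argument supplies that verification. You compute the Cartan part as the span of the $\alpha^\vee(k)$ with $k\ge d_S(\alpha)$, then combine $d_S(\alpha)\ge D_{\bm\Pi}(\alpha)$ (this is just the inclusion $S\subseteq\widetilde\Delta_{\CP_{\bm\Pi}}$ from the statement of Proposition~\ref{clasubset}) with the nonnegative expansion of $\alpha^\vee$ in simple coroots supported on the support of $\alpha$.
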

\begin{proof}
Note that 
\[\CP=\CC_N\fh\oplus \bigoplus_{\substack{(\alpha,n)\in \widetilde\Delta_{P},\\ \alpha\neq 0}}\C x_{\alpha}(n).\]
Then we have 
\[x_{\alpha}(n)\in [\fh, x_{\alpha}(n)]\subseteq [\CP,\CP]\quad \textrm{for }(\alpha,n)\in \widetilde\Delta_{P}\textrm{ with }\alpha\neq 0,\]
 and  it follows that 
\begin{align}\label{pp}
\mathcal{P}/[\CP,\CP]\cong \CC_N\fh/\left([\mathcal{P},\mathcal{P}]\cap \CC_N\fh\right).    
\end{align}
Since $\bm\Pi_{\tilde\Delta_{P}}=\bm\Pi$, one easily verifies that 
\[[\mathcal{P},\mathcal{P}]\cap \CC_N\fh=\h_{\bm\Pi}^\perp.\]
Then the assertion follows from \eqref{hpi} and \eqref{pp}.
\end{proof}

A character on $\CP$ is equivalent to  a linear functional on $\CP/[\CP,\CP]$. In view of Lemma \ref{le:pp}, we shall therefore identify characters of parabolic subalgebras of type $\bm\Pi$ with elements of
$\mathfrak{h}_{\bm\Pi}^*
$
throughout the rest of the paper.

\subsection{Parabolically induced $\CC_N\g$-modules}\label{sec:paramod}
Let $\chi$ be a  character of $\CP$. 
Form the induced $\CC_N\fg$-module 
\[
M_{\CP}(\chi):=\mathrm{Ind}_{\CP}^{\CC_N\g}~\C_\chi.
\]
When $\mathcal{P}=\CC_N\b$ (and hence $\fh_{\bm\Pi}=\CC_N\fh$), it is the usual Verma $\CC_N\fg$-module with highest weight  $\chi$ (\cf\,\cite{W2}).
For every $\lambda\in\fh^*$, denote by
\[
M_{\CP}(\chi)_\lambda
:=
\{v\in M_{\CP}(\chi)\mid h.v=\lambda(h)v
\text{ for all }h\in\fh\}
\]
 the weight space of weight $\lambda$. Then $M_{\CP}(\chi)$ is a $\h$-weight module with weight space decomposition
\[
M_{\CP}(\chi) = 
\bigoplus_{\alpha\in Q^+}
M_{\CP}(\chi)_{\chi|_{\fh}-\alpha},
\]
and $M_{\CP}(\chi)_{\chi|_{\fh}}=\C v_\chi$. Consequently, $M_{\CP}(\chi)$ has a unique irreducible quotient, which we denote by $V_{\CP}(\chi)$.
Since $\CP^{\bm\Pi}\subseteq \CP\subseteq\CP_{\bm\Pi}$, we have surjective homomorphisms
\[
M_{\CP^{\bm\Pi}}(\chi)\rightarrow M_\CP(\chi)\quad \text{and}\quad M_\CP(\chi)\rightarrow M_{\CP_{\bm\Pi}}(\chi),
\]
as well as isomorphisms
\[
V_{\CP_{\bm\Pi}}(\chi)\cong V_\CP(\chi)\cong V_{\CP^{\bm\Pi}}(\chi).
\]
In particular, the parabolically induced $\CC_N\fg$-module is irreducible only when $\CP=\CP_{\bm\Pi}$. 
Thus, we just consider the case that  $\CP=\CP_{\bm\Pi}$ in the rest of this paper. 

For later use, we need the following notion.
\begin{dfnt}
Let $\bm\Pi=(\Pi_0,\Pi_1,\dots,\Pi_{N+1})$ be an $(N+2)$-partition of $\Pi$. We say that
 $\bm\Pi$ is trivial if $\Pi_0=\Pi$, and  $\bm\Pi$ is genuine if $\Pi_{N+1}\neq \emptyset$.
\end{dfnt}
If $\bm\Pi$ is trivial, then
$\CP_{\bm\Pi}=\CC_N\fg$ and $\fh_{\bm\Pi}=0$. Hence the only character is the zero character, and
$M_{\CP_{\bm\Pi}}(0)$ is the one-dimensional trivial
$\CC_N\fg$-module. When $\bm\Pi$ is nontrivial, it suffices to treat the case that $\bm\Pi$ is genuine. Indeed, 
assume that $\bm\Pi$ is not genuine. Denote by $M$ the largest  integer such that 
$\Pi_{M+1}\ne \emptyset$. Then 
\[
\bm\Pi'=(\Pi_0,\Pi_1,\dots,\Pi_{M+1})
\]
is a genuine $(M+2)$-partition of $\Pi$. Note that the natural epimorphism of algebras 
\begin{align}\label{piMN}
    \pi_{N,M}:\ \CC_N\rightarrow \CC_M
\end{align}
induces a natural surjective Lie algebra homomorphism 
\begin{eqnarray*}
    \tilde{\pi}_{N,M}:\ \CC_N\mathfrak{g}\rightarrow \CC_M\mathfrak{g}.
\end{eqnarray*}
We have $ \widetilde{\pi}_{N,M}(\CP_{\bm\Pi}) =\CP_{\bm\Pi'}$,
and the restriction of $\widetilde{\pi}_{N,M}$ gives an isomorphism $\fh_{\bm\Pi}\cong\fh_{\bm\Pi'}$.
This allows us to view $\chi$ as a character of $\mathcal{P}_{{\bm\Pi'}}$ and so one has the $\CC_N\mathfrak{g}$-module isomorphism 
\[
M_{\mathcal{P}_{\bm\Pi}}(\chi)
\cong
\pi_{N,M}^{-1}\bigl(M_{\mathcal{P}_{\bm\Pi'}}(\chi)\bigr).
\]
Then the irreducibility of $\CC_N\fg$-module $M_{\mathcal{P}_{\bm\Pi}}(\chi)$ reduces to the irreducibility of $\CC_M\fg$-module $M_{\mathcal{P}_{\bm\Pi'}}(\chi)$.

In the rest of this section, we assume that $\bm\Pi$ is a genuine $(N+2)$-partition of $\Pi$. 
For $i=1,\dots,N+1$, denote by $\bar{\mathfrak{g}}_i$ the semisimple Lie subalgebra of $\mathfrak{g}$ generated by $\mathfrak{g}_\alpha$ for $\alpha\in\bar{\Delta}_i$. Explicitly,
\begin{align}\label{bargi}
    \bar{\mathfrak{g}}_i=\bar{\mathfrak{h}}_i\oplus \sum_{\alpha\in\bar{\Delta}_i}\mathfrak{g}_\alpha,
\end{align}
where
\[
\bar{\mathfrak{h}}_i:=\sum_{j=0}^{i}\sum_{\alpha\in \Pi_j}\mathbb{C}\alpha^{\vee}
\]
is a Cartan subalgebra of $\bar{\mathfrak{g}}_i$ and $\bar{\Delta}_i$ is the root system of $\bar{\mathfrak{g}}_i$ with respect to $\bar{\mathfrak{h}}_i$. 
In addition, we have a triangular decomposition
\begin{align*}
\bar{\mathfrak{g}}_i=\bar\fn_i^{-}\oplus\bar{\mathfrak{h}}_i\oplus  \bar\fn_i^+
\end{align*}
with respect to the base $\overline{\Pi}_i$ of $\bar\Delta_i$, where $\bar\fn_i^\pm=\bar\fg_i\cap \fn^{\pm}$. 

Consider the induced $(i+1)$-partition
\[
\bm{\Pi}^i=(\Pi_0,\Pi_1,\dots,\Pi_i)
\]
of $\overline{\Pi}_i$. We obtain a parabolic subalgebra $\mathcal{P}_{\bm\Pi^i}$ of $\CC_{i-1}\bar{\mathfrak{g}}_i$ of type  $\bm{\Pi}^i$ with support 
\[
S_{\bm\Pi^i}
=((\bar\Delta_i^+\cup\{0\})\times\{0,1,\dots,i-1\})
\cup \bigcup_{k=0}^{i}\{(-\alpha,j)\mid \alpha\in \hat\Delta_k^+,\, k\le j\le i-1\}.
\]
Explicitly, 
\[
\mathcal{P}_{\bm\Pi^i}
=\CC_{i-1}\bar{\mathfrak{h}}_i\oplus  \CC_{i-1}\bar\fn_i^{+}\oplus \sum_{k=0}^{i}\sum_{\alpha\in \hat\Delta_k^+}\sum_{j=k}^{i-1}\C x_{-\alpha}(j).
\]

For every character $\mu$ of $\mathcal{P}_{\bm\Pi^i}$, 
we have the parabolically induced $\CC_{i-1}\bar{\mathfrak{g}}_i$-module
\[
M_{\bm{\Pi}^i}(\mu)
=\mathrm{Ind}_{\mathcal{P}_{\bm\Pi^i}}^{\CC_{i-1}\bar{\mathfrak{g}}_i}~\C_{\mu}.
\]
By Lemma \ref{le:pp},  characters of $\mathcal{P}_{\bm\Pi^i}$ can be identified with linear functionals on \[
\mathfrak{h}_{\bm\Pi^i}
:=\sum_{k=1}^i\sum_{\alpha\in \Pi_k}\sum_{j=0}^{k-1}\mathbb{C}\alpha^{\vee}(j).
\]
Write $\chi_i$ for the  restriction to $\chi$ on $\fh_{\bm\Pi^i}$ (viewed as a subspace of $\fh_{\bm\Pi}$). 

One easily verifies the following result.
\begin{prpt}\label{giiso}
The $\CC_{N}\bar{\fg}_i$-module  $\pi_{N,i-1}^{-1}(M_{\bm\Pi^i}(\chi_i))$ is isomorphic to the $\CC_{N}\bar{\fg}_i$-submodule of $M_{\mathcal{P}_{\bm\Pi}}(\chi)$ generated by $v_\chi$.
\end{prpt}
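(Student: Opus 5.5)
The plan is to write $W:=\U(\CC_N\bar\fg_i).v_\chi\subseteq M_{\CP_{\bm\Pi}}(\chi)$ and to prove three things in turn. First, $W$ is killed by the kernel of $\tilde\pi_{N,i-1}$ restricted to $\CC_N\bar\fg_i$. Second, $W$ is therefore a quotient of $M_{\bm\Pi^i}(\chi_i)$ by the universal property of induction. Third, this quotient map is injective, which we check with PBW bases.

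\emph{Step 1: the kernel acts trivially.} Let $I:=\sum_{j=i}^{N}\bar\fg_i\otimes t^j$ be the kernel of $\CC_N\bar\fg_i\to\CC_{i-1}\bar\fg_i$; it is an ideal of $\CC_N\bar\fg_i$. We first show $I\subseteq\CP_{\bm\Pi}$ and that $\chi$ vanishes on $I$.
\begin{itemize}
\item If $\alpha\in\bar\Delta_i^+$, then $\alpha\in\hat\Delta_k^+$ for some $k\le i$. Hence $D_{\bm\Pi}(\alpha)\le i\le j$, so $x_{-\alpha}(j)\in\CP_{\bm\Pi}$. Also $x_\alpha(j)\in\CC_N\b$.
\item For $h\in\bar\fh_i$, write $h$ as a combination of $\beta^\vee$ with $\beta\in\Pi_k$, $k\le i$. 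Then $\beta^\vee(j)$ with $j\ge i\ge k$ lies in $\fh_{\bm\Pi}^\perp=[\CP,\CP]\cap\CC_N\fh$.
\end{itemize}
Consequently $\chi$ vanishes on $I$: it kills root vectors (they lie in $[\CP,\CP]$, as shown in the proof of Lemma \ref{le:pp}) and it kills $\fh_{\bm\Pi}^\perp$. So $I.v_\chi=0$. Since $I$ is an ideal of $\CC_N\bar\fg_i$, the space $\{w\in W\mid I.w=0\}$ is a $\CC_N\bar\fg_i$-submodule of $W$ containing $v_\chi$. It therefore equals $W$, and $W$ is a $\CC_{i-1}\bar\fg_i$-module pulled back along $\pi_{N,i-1}$.

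\emph{Step 2: the surjection.} Compare supports. The image of $\CP_{\bm\Pi}\cap\CC_N\bar\fg_i$ in $\CC_{i-1}\bar\fg_i$ is exactly $\CP_{\bm\Pi^i}$, because $D_{\bm\Pi^i}=D_{\bm\Pi}|_{\bar\Delta_i^+}$. This subalgebra acts on $v_\chi$ via $\chi$. Its restriction to $\fh_{\bm\Pi^i}$ is $\chi_i$, and on $\fh_{\bm\Pi^i}^\perp$ it is zero, again since $\chi$ vanishes on $[\CP,\CP]$. The universal property of induction then gives a surjection $M_{\bm\Pi^i}(\chi_i)\to W$ sending $v_{\chi_i}\mapsto v_\chi$, and this map is $\CC_N\bar\fg_i$-linear after pullback.

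\emph{Step 3: injectivity, the main point.} By PBW, $M_{\bm\Pi^i}(\chi_i)$ has a basis of ordered monomials in the elements
\[
x_{-\alpha}(j),\qquad \alpha\in\bar\Delta_i^+,\ 0\le j<D_{\bm\Pi}(\alpha),
\]
applied to $v_{\chi_i}$. These same elements lie in the complement $\fn_{\bm\Pi,S}^-$ of $\CP_{\bm\Pi}$ in \eqref{decpara}. Since $M_{\CP_{\bm\Pi}}(\chi)\cong\U(\CC_N\fg)\otimes_{\U(\CP_{\bm\Pi})}\C_\chi$ has a PBW basis given by ordered monomials in a basis of $\fn^-_{\bm\Pi,S}$ applied to $v_\chi$, the images of our monomials are linearly independent. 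We need to choose the ordering compatibly, but any fixed order works. Hence the map is injective.

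The only delicate points are the identification of $\CP_{\bm\Pi}\cap\CC_N\bar\fg_i$ with the preimage of $\CP_{\bm\Pi^i}$, and the vanishing of $\chi$ on the Cartan part of $I$. Both reduce to the inequality $D_{\bm\Pi}(\alpha)\le i$ for $\alpha\in\bar\Delta_i$ and to the description of $[\CP,\CP]\cap\CC_N\fh$ from Lemma \ref{le:pp}.
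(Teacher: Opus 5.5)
Your proof is correct. The paper gives no argument beyond ``one easily verifies'', and your three steps are exactly the routine verification it leaves to the reader. First, the kernel $\sum_{j\ge i}\bar\fg_i\otimes t^j$ lies in $\CP_{\bm\Pi}$ and in $\ker\chi$, via $[\CP_{\bm\Pi},\CP_{\bm\Pi}]\cap\CC_N\fh=\fh_{\bm\Pi}^\perp$, so $W$ factors through $\CC_{i-1}\bar\fg_i$. Second, $D_{\bm\Pi^i}=D_{\bm\Pi}|_{\bar\Delta_i^+}$ yields the surjection from $M_{\bm\Pi^i}(\chi_i)$. Third, restricting the total order on $\mathcal{B}$ makes the PBW monomials of $M_{\bm\Pi^i}(\chi_i)$ a subset of the basis \eqref{eq:basispara}, which gives injectivity.
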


\subsection{Shapovalov  form on $M_{\CP_{\bm\Pi}}(\chi)$}\label{shap}
In this subsection, we introduce the Shapovalov  form on the parabolically induced $\CC_N\fg$-module $M_{\CP_{\bm\Pi}}(\chi)$, which plays a key role in the study of its irreducibility.

Let $\omega$ be the  anti-involution of $\fg$ determined by 
\[\omega(x_{\alpha})=x_{-\alpha}\quad\te{and}\quad\omega(h)=h\quad\textrm{for }\alpha\in\Delta^+,~h\in\fh.\]
This induces an anti-involution of $\CC_N\fg$, denoted by $\omega_N$, such that 
\[\omega_N(x_{\alpha}(n))=x_{-\alpha}(n)\quad\te{and}\quad\omega_N(h(n))=h(n)\quad\textrm{for }\alpha\in\Delta^+,~h\in\fh,~0\leq n\leq N.\]
Note that $\omega_N$ extends uniquely to an anti-involution of $\mathcal{U}(\CC_N\fg)$, still denoted by $\omega_N$. On the other hand, by the Poincaré-Birkhoff-Witt theorem,  there is a decomposition
\begin{align}\label{dec}
    \mathcal{U}(\CC_N\fg)=\mathcal{U}(\CC_N\fh)\oplus((\CC_N\fn^-)\mathcal{U}(\CC_N\fg)+\mathcal{U}(\CC_N\fg)(\CC_N\fn^+))
\end{align}
as a direct sum of $\mathcal{U}(\CC_N\fh)$-bimodules. 

Denote by
\begin{align*}
    {\bf{q}}:\ \mathcal{U}(\CC_N \fg)\to \mathcal{U}(\CC_N \fh)
\end{align*}
 the projection  map  associated with the decomposition \eqref{dec}. Define a bilinear form 
\[\textbf{F}:\mathcal{U}(\CC_N\fg)\times \mathcal{U}(\CC_N\fg)\to \mathcal{U}(\CC_N\fh)\]
by 
\begin{align}\label{deff}
    \textbf{F}(x,y)=\textbf{q}(\omega_N(x)y)\quad \textrm{for }x,y\in \mathcal{U}(\CC_N\fg).
\end{align}
One easily verifies that $\textbf{F}$ is a symmetric contravariant  bilinear form on $\mathcal{U}(\CC_N\fg)$, which is called the Shapovalov  form on $\mathcal{U}(\CC_N\fg)$ (cf.\,\cite{W2}).

Set $\widetilde{\chi}:=\chi|_{\CC_N\b}$, which is a character on $\CC_N\b$, or equivalently, a linear functional on $\CC_N\fh$. 
We further extend $\widetilde{\chi}$ uniquely to a unital algebra
homomorphism
\[
\widetilde{\chi}:\mathcal U(\CC_N\fh)\longrightarrow\C.
\]
Recall the usual Shapovalov form 
\[\mathbf B_{\widetilde{\chi}}^{\,\b}: M_{\CC_N\b}(\widetilde{\chi})\times M_{\CC_N\b}(\widetilde{\chi})\rightarrow \C\] 
on the Verma $\CC_N\fg$-module $M_{\CC_N\b}(\widetilde{\chi})$, which  is given by
\[
\mathbf B_{\widetilde{\chi}}^{\,\b}
(u_1v_{\widetilde{\chi}},u_2v_{\widetilde{\chi}})
=
\widetilde{\chi}\bigl(\mathbf F(u_1,u_2)\bigr),
\qquad
u_1,u_2\in\mathcal U(\CC_N\fn^-).
\]
The radical  of $\mathbf B_{\widetilde{\chi}}^{\,\b}$
is exactly the maximal proper submodule of
$M_{\CC_N\b}(\widetilde{\chi})$ (see \cite{W2}).
Since $M_{\CP_{\bm\Pi}}(\chi)$ is a quotient of $M_{\CC_N\b}(\widetilde{\chi})$, $\mathbf B_{\widetilde{\chi}}^{\,\b}$ descends uniquely to a
well-defined symmetric contravariant bilinear form
\[
\mathbf B_\chi:
M_{\CP_{\bm\Pi}}(\chi)\times M_{\CP_{\bm\Pi}}(\chi)
\longrightarrow\C,
\]
characterized by
\[
\mathbf B_\chi
(u_1v_\chi,u_2v_\chi)
=
\widetilde{\chi}\bigl(\mathbf F(u_1,u_2)\bigr),
\qquad
u_1,u_2\in\mathcal U(\CC_N\fn^-).
\]
We call $\mathbf B_\chi$ the Shapovalov form on
$M_{\CP_{\bm\Pi}}(\chi)$.
The radical $\operatorname{Rad}\mathbf B_\chi$ of $\mathbf B_\chi$ is also the maximal
proper submodule of $M_{\CP_{\bm\Pi}}(\chi)$. In particular,
$M_{\CP_{\bm\Pi}}(\chi)$ is irreducible if and only if
$\operatorname{Rad}\mathbf B_\chi=\{0\}$.

 

The study of the Shapovalov form on $M_{\CP_{\bm\Pi}}(\chi)$ will  reduce to the study of the restrictions 
\begin{align*}
    (\textbf{B}_{\chi})_{\alpha}: M_{\CP_{\bm\Pi}}(\chi)_{\chi|_{\fh}-\alpha}\times M_{\CP_{\bm\Pi}}(\chi)_{\chi|_{\fh}-\alpha}\to \C
\end{align*}
for all $\alpha\in Q^{+}$. Indeed,  the adjoint action of $\fh$ on $\mathcal{U}({\CC_N\fg})$ induces a $Q$-grading structure on $\mathcal{U}({\CC_N\fg})$ such that 
\begin{align*}
    \mathcal{U}({\CC_N\fg})=\bigoplus_{\alpha\in Q}\mathcal{U}({\CC_N\fg})_{\alpha},
\end{align*}
where \[\mathcal{U}({\CC_N\fg})_{\alpha}=\{x\in \mathcal{U}({\CC_N\fg})\mid [h,x]=\alpha(h)x \quad \te{for all}~ h \in \fh\}.\]
It follows that $\mathcal{U}({\CC_N\fh})\subseteq \mathcal{U}({\CC_N\fg})_{0}$ and the weight spaces
$ M_{\CP_{\bm\Pi}}(\chi)_{\chi|_{\fh}-\alpha} ~\text{and}~$
$M_{\CP_{\bm\Pi}}(\chi)_{\chi|_{\fh}-\beta}$
are orthogonal with respect to $\mathbf B_\chi$ whenever
$\alpha\neq\beta$. This yields 
\begin{align*}
    \mathrm{Rad\,}(\mathbf{B}_\chi)=\bigoplus_{\alpha\in Q^+}\mathrm{Rad\,}((\mathbf{B}_\chi)_{\alpha}), 
\end{align*}
where \[\mathrm{Rad\,}((\mathbf{B}_\chi)_{\alpha}):=\mathrm{Rad\,}\mathbf{B}_\chi\cap M_{\CP_{\bm\Pi}}(\chi)_{\chi|_{\fh}-\alpha}.\]

Consequently, we have 
\begin{lemt}\label{rad}
The $\CC_N\fg$-module $M_{\CP_{\bm\Pi}}(\chi)$ is reducible if and only if $\mathrm{Rad\,}((\mathbf{B}_\chi)_{\alpha})\neq \{0\}$
for some $\alpha\in Q^+$.
\end{lemt}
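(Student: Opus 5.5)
The plan is to combine two facts recorded just before the statement. First, $M_{\CP_{\bm\Pi}}(\chi)$ is irreducible exactly when $\operatorname{Rad}\mathbf B_\chi=\{0\}$. Second, the radical splits as $\bigoplus_{\alpha\in Q^+}\mathrm{Rad}((\mathbf B_\chi)_\alpha)$. I will make each fact precise and then read off the equivalence.

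\emph{Step 1: the radical is the maximal proper submodule.} By contravariance, $\mathbf B_\chi(xu,w)=\mathbf B_\chi(u,\omega_N(x)w)$ for all $x\in\CC_N\fg$ and $u,w\in M_{\CP_{\bm\Pi}}(\chi)$. Hence $\operatorname{Rad}\mathbf B_\chi$ is a submodule. It misses $v_\chi$, since $\mathbf B_\chi(v_\chi,v_\chi)=\widetilde\chi(1)=1$, so it is proper.

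Conversely, let $K$ be a proper submodule and $w\in K$ a weight vector. For any $u\in\mathcal U(\CC_N\fn^-)$, write $\omega_N(u)w=c\,v_\chi$ plus lower weight components. The component $c\,v_\chi$ lies in $K$, and $K$ is proper and weight-graded, so $c=0$. Therefore $\mathbf B_\chi(uv_\chi,w)=c=0$, and $K\subseteq\operatorname{Rad}\mathbf B_\chi$. Since $M_{\CP_{\bm\Pi}}(\chi)=\mathcal U(\CC_N\fn^-)v_\chi$, this shows the module is reducible if and only if $\operatorname{Rad}\mathbf B_\chi\ne\{0\}$.

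\emph{Step 2: the radical splits over weight spaces.} Weight spaces of distinct weights are $\mathbf B_\chi$-orthogonal. This holds because $\mathbf F(u_1,u_2)=\mathbf q(\omega_N(u_1)u_2)$, and $\omega_N(u_1)u_2$ has nonzero $Q$-degree when the weights differ, so its projection to $\mathcal U(\CC_N\fh)\subseteq\mathcal U(\CC_N\fg)_0$ vanishes. The radical is also $\fh$-stable. Consequently a vector lies in the radical if and only if each of its weight components does, and a weight component lies in the radical exactly when it lies in $\mathrm{Rad}((\mathbf B_\chi)_\alpha)$ for its own weight space. This gives the displayed decomposition.

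\emph{Conclusion.} Combining the two steps, $\operatorname{Rad}\mathbf B_\chi\ne\{0\}$ holds if and only if some summand $\mathrm{Rad}((\mathbf B_\chi)_\alpha)$ is nonzero. Together with Step 1 this is the claim. The only step needing care is the inclusion $K\subseteq\operatorname{Rad}\mathbf B_\chi$, which uses that the highest weight space is one-dimensional and spanned by $v_\chi$.
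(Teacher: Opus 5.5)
Your proposal is correct and follows the same route as the paper. The paper also obtains the lemma by combining two facts: $\operatorname{Rad}\mathbf B_\chi$ is the maximal proper submodule, and distinct weight spaces are orthogonal, so that $\operatorname{Rad}\mathbf B_\chi=\bigoplus_{\alpha\in Q^+}\operatorname{Rad}((\mathbf B_\chi)_\alpha)$. You additionally supply the details the paper treats as standard: contravariance, the one-dimensionality of the top weight space $\C v_\chi$, and the $Q$-gradedness of $\mathbf q$.
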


\subsection{Jantzen's irreducibility criterion}
As before, we identify $\bar{\mathfrak{g}}_1$ with $\CC_0\bar{\mathfrak{g}}_1$. In this subsection, we recall the irreducibility criterion for parabolically induced $\bar{\fg}_1$-modules established in \cite{J}.

Let $\mu$ be a linear functional on $\fh_{\bm\Pi^1}$.
Let $\bar\mu\in\bar\fh_1^*$ be the extension of $\mu$
satisfying
\[
\bar\mu|_{\fh_{\bm\Pi^1}}=\mu
\quad\text{and}\quad
\bar\mu(\alpha^\vee)=0
\quad\text{for all }\alpha\in\Pi_0.
\]
Consider the Borel subalgebra 
\[\mathcal{P}_{(\emptyset,\Pi_0\cup\Pi_1)}=\bar{\mathfrak{h}}_1\oplus \sum_{\alpha\in\bar{\Delta}_1^+}\mathfrak{g}_{\alpha}\] of $\bar{\mathfrak{g}}_1$.
The corresponding Verma $\bar\fg_1$-module of
highest weight $\bar\mu$ is
\[M_{\bar{\mathfrak{g}}_1}(\bar\mu):=\te{Ind}_{\CP_{(\emptyset,\Pi_0\cup\Pi_1)}}^{\bar{\mathfrak{g}}_1}\C_{\bar{\mu}}.\]
As usual, the character of  $M_{\bar{\mathfrak{g}}_1}(\bar{\mu})$ is given by
\begin{equation*}
\textrm{char}\,M_{\bar{\mathfrak{g}}_1}(\bar{\mu})=\sum_{\lambda\in\bar{\mathfrak{h}}_1^{\ast}}\dim\, (M_{\bar{\mathfrak{g}}_1}(\bar{\mu}))_{\lambda}\cdot e(\lambda).
\end{equation*} 

Write $\overline W_1$ for the Weyl group of the root system $\bar\Delta_1$ and write $W_{0}\subseteq \overline W_1$ for the subgroup generated by simple reflections $s_{\alpha}$ for $\alpha\in \Pi_0$. Set 
\[\rho:= \frac{1}{2}\sum_{\alpha\in\bar\Delta_1^+}\alpha.\]Recall the dot action of $\overline W_1$ on $(\bar{\mathfrak{h}}_1)^\ast$ is given by
\[w\cdot \gamma=w(\gamma+\rho)-\rho\quad \te{for }w\in \overline W_1,~\gamma\in (\bar{\mathfrak{h}}_1)^\ast. \]
For every $\gamma\in (\bar{\mathfrak{h}}_1)^\ast$, define
\[\theta(\gamma):=\sum_{w\in W_0}(-1)^{\ell(w)}\te{char}\,M_{\bar{\mathfrak{g}}_1}(w\cdot\gamma),\]
where $\ell(w)$ denotes the length of $w$ (see \cite{H}). In addition, we also set 
\[(\bar\Delta_1)_{\gamma}:=\{\alpha\in\hat\Delta_1^+\mid (\gamma+\rho)(\alpha^{\vee})\in\Z_{+}\}.\]
The following irreducibility criterion for $M_{\bm\Pi^1}(\mu)$,
together with its corollary, was established in \cite{J}.
\begin{prpt}\label{pr:0}
Let $\mu$ be a linear functional on $\fh_{\bm\Pi^1}$. Then the  parabolically induced ${\bar{\mathfrak{g}}_1}$-module $M_{\bm{\Pi}^1}(\mu)$ is irreducible if and only if the linear functional $\mu$ satisfies the condition that 
\begin{enumerate}
    \item [(J)] $\quad \sum_{\beta\in(\bar\Delta_1)_{\bar{\mu}}}\theta(s_{\beta}\cdot \bar{\mu})=0.$
\end{enumerate}
\end{prpt}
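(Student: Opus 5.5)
This statement is Jantzen's criterion, so the plan is to reduce it to \cite{J} by identifying $M_{\bm\Pi^1}(\mu)$ with a classical generalized Verma module over $\bar\fg_1$. Since $\CC_0\bar\fg_1=\bar\fg_1$, the subalgebra $\mathcal P_{\bm\Pi^1}$ is $\bar\fh_1\oplus\bar\fn_1^+\oplus\sum_{\alpha\in\hat\Delta_0^+}\C x_{-\alpha}$, the standard parabolic $\mathfrak p_{\Pi_0}$ with Levi factor $\mathfrak l_0$ generated by $\bar\fh_1$ and $\fg_{\pm\alpha}$ for $\alpha\in\Delta_0$. The first step is to check that a character of $\mathfrak p_{\Pi_0}$, identified by Lemma~\ref{le:pp} with $\mu\in\fh_{\bm\Pi^1}^*$, is exactly a weight $\bar\mu\in\bar\fh_1^*$ with $\bar\mu(\alpha^\vee)=0$ for $\alpha\in\Pi_0$. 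This holds because $[\mathfrak p_{\Pi_0},\mathfrak p_{\Pi_0}]\cap\bar\fh_1=\sum_{\alpha\in\Pi_0}\C\alpha^\vee$. Consequently $M_{\bm\Pi^1}(\mu)=\mathcal U(\bar\fg_1)\otimes_{\mathcal U(\mathfrak p_{\Pi_0})}\C_{\bar\mu}$ is the generalized Verma module induced from the one-dimensional $\mathfrak l_0$-module of weight $\bar\mu$, which is finite-dimensional and irreducible since $\bar\mu$ is $\Pi_0$-dominant integral (it is zero on those coroots).

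The second step records the character. The BGG resolution of the one-dimensional $\mathfrak l_0$-module, or Weyl's formula for $\mathfrak l_0$ combined with PBW, gives
\[
\mathrm{char}\,M_{\bm\Pi^1}(\mu)=\sum_{w\in W_0}(-1)^{\ell(w)}\mathrm{char}\,M_{\bar\fg_1}(w\cdot\bar\mu)=\theta(\bar\mu).
\]
This shows that the $\theta$ in condition (J) is exactly the character of the generalized Verma module attached to $s_\beta\cdot\bar\mu$. These weights need not be $\Pi_0$-dominant, in which case $\theta$ is zero or $\pm$ the character of a genuine generalized Verma module, by the usual straightening under $W_0$.

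The third step invokes Jantzen's determinant (sum) formula for the contravariant form on parabolic Verma modules \cite{J}:
\[
\sum_{i>0}\mathrm{char}\,M^i=\sum_{\beta\in(\bar\Delta_1)_{\bar\mu}}\theta(s_\beta\cdot\bar\mu),
\]
where $M^i$ is the Jantzen filtration and $M^1$ is the radical of the Shapovalov form, hence the maximal submodule. The sum on the right runs over $\beta\in\hat\Delta_1^+=\bar\Delta_1^+\setminus\Delta_0$, exactly as in the definition of $(\bar\Delta_1)_{\bar\mu}$. If $M_{\bm\Pi^1}(\mu)$ is irreducible, then $M^1=0$ and the sum vanishes. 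Conversely, if the sum vanishes, then $\sum_i\mathrm{char}\,M^i=0$. Since each character has nonnegative coefficients, $M^1=0$, and so the module is irreducible.

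The main point requiring care is matching normalizations: the paper's coroots, $\rho$, and dot action must agree with those of \cite{J}, and $(\bar\Delta_1)_{\bar\mu}$ must be taken with $\Z_+$ rather than all of $\Z$. Beyond that, the proposition is a citation of Jantzen's theorem, together with the identification of $\mathcal P_{\bm\Pi^1}$ as a standard parabolic of $\bar\fg_1$.
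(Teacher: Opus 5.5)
Your proposal is correct and takes the same route as the paper, which gives no independent argument and simply cites Jantzen's criterion from \cite{J}. Your identification of $\mathcal P_{\bm\Pi^1}$ with the standard parabolic subalgebra of $\bar{\mathfrak g}_1$ attached to $\Pi_0$, of its characters with the weights $\bar\mu$ vanishing on $\alpha^\vee$ for $\alpha\in\Pi_0$, and of $\mathrm{char}\,M_{\bm\Pi^1}(\mu)$ with $\theta(\bar\mu)$ makes explicit the translation the paper leaves implicit; after that, your sum-formula argument is exactly Jantzen's.
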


\begin{cort}\label{corg1}
Let $\mu$ be a linear functional on $\fh_{\bm\Pi^1}$. Assume that
 \[(\bar{\mu}+\rho)(\beta^{\vee})\neq 0\quad\textrm{for all  }\beta\in\hat\Delta_1.\] 
Then $M_{\bm{\Pi}^1}(\mu)$ is irreducible if and only if $(\bar\Delta_1)_{\bar{\mu}}=\emptyset$.
\end{cort}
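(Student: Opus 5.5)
The plan is to deduce the corollary directly from Proposition~\ref{pr:0}. I will show that, under the regularity hypothesis, the sum in condition (J) vanishes exactly when its index set $(\bar\Delta_1)_{\bar\mu}$ is empty. If $(\bar\Delta_1)_{\bar\mu}=\emptyset$, the sum is empty, so (J) holds and $M_{\bm\Pi^1}(\mu)$ is irreducible. The substance is the converse: when $(\bar\Delta_1)_{\bar\mu}\neq\emptyset$, the sum $\sum_{\beta}\theta(s_\beta\cdot\bar\mu)$ must be nonzero.

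Set $\lambda:=\bar\mu+\rho$. First I will check that $\lambda$ is regular for all of $\bar\Delta_1$. Since $\bar\mu(\alpha^\vee)=0$ for $\alpha\in\Pi_0$, we get $\lambda(\alpha^\vee)=\rho(\alpha^\vee)=1$ for $\alpha\in\Pi_0$. Hence $\lambda$ is dominant integral and regular for $\Delta_0$, so $\lambda(\alpha^\vee)\in\Z\setminus\{0\}$ for every $\alpha\in\Delta_0$. On the complement $\hat\Delta_1=\bar\Delta_1\setminus\Delta_0$, regularity is exactly the hypothesis. From regularity I conclude that the stabilizer of $\lambda$ in $\overline W_1$ is trivial. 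For this I use that the stabilizer of a vector in the complexified reflection representation is generated by the reflections it contains; one can see this by intersecting the stabilizers of the real and imaginary parts, each of which is a parabolic subgroup. I regard this as the one step needing care.

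Next I use two standard facts about $\theta$. The first is the alternating identity $\theta(w\cdot\gamma)=(-1)^{\ell(w)}\theta(\gamma)$ for $w\in W_0$. The second is that $\theta(\gamma)=0$ when $\gamma+\rho$ is $W_0$-singular. Moreover, the elements $\theta(\gamma)$, for $\gamma$ with $\gamma+\rho$ strictly $W_0$-dominant, are linearly independent. This holds because such a $\theta(\gamma)$ has the term $e(\gamma)$ with coefficient $1$ as its unique maximal term, so a triangularity argument applies.

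For each $\beta\in(\bar\Delta_1)_{\bar\mu}$, the weight $s_\beta\lambda$ is still $W_0$-regular, since $(s_\beta\lambda)(\alpha^\vee)=\lambda((s_\beta\alpha)^\vee)\neq0$. Hence there is a unique $w_\beta\in W_0$ with $w_\beta s_\beta\lambda$ strictly $W_0$-dominant, and
\[
\theta(s_\beta\cdot\bar\mu)=(-1)^{\ell(w_\beta)}\theta(w_\beta s_\beta\cdot\bar\mu)\neq0 .
\]

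The main obstacle is ruling out cancellation between different $\beta$. I group the summands according to the strictly dominant representative $w_\beta s_\beta\lambda$. Suppose $\beta\neq\beta'$ give the same representative, that is, $s_\beta\lambda=w\,s_{\beta'}\lambda$ with $w\in W_0$. Triviality of the stabilizer then forces $s_\beta=w s_{\beta'}$. Taking determinants gives $\det w=1$, so $\ell(w)$ is even. Consequently $\theta(s_\beta\cdot\bar\mu)=\theta(s_{\beta'}\cdot\bar\mu)$, and all summands in one group carry the same sign.

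The sum therefore becomes $\sum_\nu c_\nu\,\theta(\nu)$, taken over distinct strictly $W_0$-dominant shifted weights $\nu$. Each coefficient $c_\nu$ is a nonzero integer, namely $\pm$ the size of the group. By the linear independence above, this sum is nonzero whenever $(\bar\Delta_1)_{\bar\mu}\neq\emptyset$. Proposition~\ref{pr:0} then gives that $M_{\bm\Pi^1}(\mu)$ is reducible, which completes the converse.
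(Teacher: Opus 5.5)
Your proposal is correct. The paper gives no argument of its own here (it attributes the corollary to \cite{J}), and your deduction from Proposition~\ref{pr:0} is the standard way this corollary is obtained from Jantzen's criterion: you establish full regularity of $\bar\mu+\rho$ on $\bar\Delta_1$, deduce a trivial stabilizer, use the determinant argument to show that coinciding terms $\theta(s_\beta\cdot\bar\mu)$ carry the same sign, and conclude by triangular linear independence of the $\theta(\nu)$. One point you should state explicitly: $s_\beta(\bar\mu+\rho)=(\bar\mu+\rho)-(\bar\mu+\rho)(\beta^\vee)\beta$ with $(\bar\mu+\rho)(\beta^\vee)\in\Z_+$ is $\Delta_0$-integral, and this is what makes ``strictly $W_0$-dominant'' meaningful and the triangularity argument valid.
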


\subsection{Irreducibility of parabolically induced modules}
In this subsection, let $\chi$ be a character of $\CP_{\bm\Pi}$. We now state the main result of this section, which gives an irreducibility criterion for $M_{\CP_{\bm\Pi}}(\chi)$.

\begin{dfnt}
Let $\chi$ be a  character of $\CP_{\bm\Pi}$. We say that 
$\chi$ is generic if either $N=0$, or $N\ge 1$ and \[\chi(\alpha^{\vee}(i-1))\neq 0\]
for all $\alpha\in \hat\Delta_i^+$ and $i=2,3,\dots,N+1$. We say that $\chi$ is nonsingular if it is generic and $\chi_1$ satisfies the condition $(J)$.
\end{dfnt}

The following is the main result of this section, whose proof will be presented in Section \ref{sec:5}.

 \begin{thm}\label{thm:main}
 Let $\bm\Pi$ be a genuine  $(N+2)$-partition of $\Pi$ 
 and let $\chi$ be a character of $\mathcal{P}_{\bm\Pi}$. Then $M_{\CP_{\bm\Pi}}(\chi)$ is irreducible if and only if  $\chi$ is nonsingular.
\end{thm}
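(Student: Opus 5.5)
We prove the two implications separately, following the outline announced in the introduction: sufficiency through a Jantzen-type argument, necessity through the determinant of the Shapovalov form and the explicit construction of radical vectors.

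\emph{Necessity.} First suppose $\chi_1$ fails $(J)$. By Proposition~\ref{pr:0}, $M_{\bm\Pi^1}(\chi_1)$ is reducible, so its Shapovalov form has a nonzero radical vector $u v$ with $u\in\mathcal U(\bar\fn_1^-)$. By Proposition~\ref{giiso}, $\pi_{N,0}^{-1}(M_{\bm\Pi^1}(\chi_1))$ is the $\CC_N\bar\fg_1$-submodule of $M_{\CP_{\bm\Pi}}(\chi)$ generated by $v_\chi$, so $u v_\chi\neq 0$ there. We then check that $u v_\chi$ lies in $\operatorname{Rad}\mathbf B_\chi$. Weight considerations force any $u'\in\mathcal U(\CC_N\fn^-)$ pairing nontrivially with $u v_\chi$ to have weight in $-Q^+(\overline\Pi_1)$. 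Modulo the annihilator, such a $u'$ involves only $x_{-\beta}(0)$ with $\beta\in\hat\Delta_1^+$, because $x_{-\beta}(j)$ with $j\ge1$ lies in $\CP_{\bm\Pi}$.

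Next suppose $\chi$ is not generic: $\chi(\alpha^\vee(i-1))=0$ for some $\alpha\in\hat\Delta_i^+$ with $i\ge2$. Take $v=x_{-\alpha}(i-1)v_\chi$, which is nonzero since $x_{-\alpha}(i-1)\notin\CP_{\bm\Pi}$. For $x_\beta(k)$ with $\beta\in\Delta^+$, compute $x_\beta(k)v$. When $\beta=\alpha$, $k=0$, this gives $\chi(\alpha^\vee(i-1))v_\chi=0$. When $\beta\ne\alpha$ we obtain $x_{\beta-\alpha}(i-1+k)v_\chi$. If $\beta-\alpha\in\Delta^+$ this vanishes. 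If $\alpha-\beta\in\Delta^+$, then $D_{\bm\Pi}(\alpha-\beta)\le i\le i-1+k$ for $k\ge1$, and for $k=0$ we must check $D_{\bm\Pi}(\alpha-\beta)\le i-1$. This last case is delicate. If it fails, we would instead take a root $\alpha$ with $D_{\bm\Pi}(\alpha)=i$ that is minimal (for instance a simple root $\alpha\in\Pi_i$, or the lowest root of $\hat\Delta_i^+$ in the relevant string) and use an argument over the $\mathfrak{sl}_2$-string. Alternatively, we would use the determinant formula of Theorem~\ref{th:det}, whose leading factors $\chi(\alpha^\vee(i-1))$ force vanishing.

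\emph{Sufficiency.} Assume $\chi$ is nonsingular, and argue by induction on $N$; the case $N=0$ is Proposition~\ref{pr:0}. Filter $M_{\CP_{\bm\Pi}}(\chi)$ by the $\CC_N\bar\fg_N$-submodule generated by $v_\chi$, identified via Proposition~\ref{giiso} with $\pi^{-1}_{N,N-1}(M_{\bm\Pi^N}(\chi_N))$, which is irreducible by induction. A PBW-type factorization
\[
M_{\CP_{\bm\Pi}}(\chi)\cong \mathcal U(\mathfrak m)\otimes M_{\bm\Pi^N}(\chi_N),\qquad \mathfrak m=\sum_{\alpha\in\hat\Delta^+_{N+1}}\sum_{j=0}^{N}\C x_{-\alpha}(j),
\]
then reduces the problem to pairing the top-degree elements $x_{-\alpha}(N)$ against $x_{\alpha}(0)$. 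The top-degree part of the Shapovalov form is governed by the values $\chi(\beta^\vee(N))$, which are nonzero by genericity. This is the Takiff analogue of the nondegeneracy of Verma modules over $\CC_N\fg$ at a generic top level \cite{W2}. We make it precise by computing the leading term of $\det(\mathbf B_\chi)_\alpha$ in the variables $\chi(\cdot(N))$, as in Theorem~\ref{th:det}, showing it is a product of powers of $\chi(\beta^\vee(N))$ times determinants from the lower level.

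The main obstacle is controlling the determinant. Since $\fn^\pm_{\bm\Pi,S}$ are not subalgebras, the PBW basis does not split cleanly. We would handle this by ordering monomials by total $t$-degree and showing the Gram matrix is block triangular with respect to this order.
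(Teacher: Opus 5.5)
Your treatment of the case where $\chi_1$ violates (J) is fine. It is a Shapovalov-radical version of the paper's argument, which uses a maximal-weight vector of a proper submodule of $W_\chi(\bar\fg_1)$.

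The genuine gap is that both sufficiency and the non-generic half of necessity rest on the determinant, and what you propose to compute is only its leading component in the variables $\chi(\alpha^\vee(N))$, $\alpha\in\Pi_{N+1}$. Knowing that this component is $c\prod_\alpha p_{\alpha^\vee(N)}^{k_\alpha}$ says nothing about $\det_\gamma$ at a given $\chi$. Lower-order terms could make it vanish when all $\chi(\alpha^\vee(N))\neq0$, or keep it nonzero when one of them is $0$. Exact (block) triangularity of the Gram matrix would close this, but you only assert it, and it is precisely the hard point. The paper's own analysis (Lemma \ref{leql1l2}) gives only degree bounds on off-diagonal entries, not vanishing.

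Your direct attempt at necessity fails, as you feared, whenever $\alpha$ is not minimal in $\hat\Delta_i^+$. You also cannot switch to a minimal root, since the vanishing hypothesis concerns the given $\alpha$. For example, take $\mathfrak{sl}_3(\C)$, $N=1$, $\Pi_2=\Pi$ and $\chi((h_1+h_2)(1))=0\neq\chi(h_1(1))$. Then $E_{1,2}(0)E_{3,1}(1)v_\chi=-E_{3,2}(1)v_\chi\neq0$. The actual singular vector $\bigl(\chi(h_2(1))E_{3,1}(1)-E_{2,1}(1)E_{3,2}(1)\bigr)v_\chi$ is not a monomial, so no $\mathfrak{sl}_2$-string argument will produce it.

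The missing idea is a determinant-free proof of sufficiency, which the paper supplies first (Theorem \ref{th:mainirr}). It proceeds by induction on $i$. For $v\in W_\chi(\bar\fg_i)\setminus W_\chi(\bar\fg_{i-1})$, one takes:
\begin{itemize}
\item the smallest $t$-degree $k$ of a level-$i$ factor;
\item the extremal value $d$ of the resulting grading (largest if $k=0$, smallest if $k>0$);
\item a $\succeq$-minimal root $\alpha_d$.
\end{itemize}
Lemma \ref{fmaj} then shows that the degree-$(d-1)$ component of $x_{-\alpha_d}(i-1-k)v$ is $\chi((-\alpha_d)^\vee(i-1))$ times a nonzero vector (Proposition \ref{prop:destoi-1}). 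Hence every nonzero submodule meets $W_\chi(\bar\fg_{i-1})$.

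Only afterwards does the determinant enter. Sufficiency puts the zero set of $\det_\gamma$ inside $\bigcup_\alpha\{p_{\alpha^\vee(N)}=0\}$. The Nullstellensatz together with unique factorization then forces $\det_\gamma$ to be a monomial in the $p_{\alpha^\vee(N)}$. So it is homogeneous and equal to the leading term you planned to compute (Proposition \ref{det}, Theorem \ref{th:det}).

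Necessity then follows by choosing the minimal level $i$ at which genericity fails. This makes $\chi_{i-1}$ nonsingular, so Theorem \ref{th:det} applies to $M_{\bm\Pi^i}(\chi_i)$ and gives $\det_\alpha(\chi_i)=0$. The resulting singular vector is then lifted to $M_{\CP_{\bm\Pi}}(\chi)$.
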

As an immediate consequence of Corollary \ref{corg1} and Theorem \ref{thm:main}, we have the following result.

\begin{cort}
  Let $\bm\Pi$ be a genuine  $(N+2)$-partition of $\Pi$ and let $\chi$ be a  character of $\mathcal{P}_{\bm\Pi}$. Assume that $\chi$ is generic and 
  \[(\overline{\chi_1}+\rho)(\beta^{\vee})\neq 0\quad \textrm{ for all }\beta\in\hat\Delta_1.\] Then $M_{\CP_{\bm\Pi}}(\chi)$ is irreducible if and only if 
  $(\bar\Delta_1)_{\overline{\chi_1}}=\emptyset$.
\end{cort}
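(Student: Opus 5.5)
The statement to prove is the last corollary. I will derive it by combining Theorem~\ref{thm:main} with Corollary~\ref{corg1} applied to $\mu=\chi_1$; no new ingredient should be needed.

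First, by Theorem~\ref{thm:main}, $M_{\CP_{\bm\Pi}}(\chi)$ is irreducible if and only if $\chi$ is nonsingular. By definition, $\chi$ is nonsingular exactly when it is generic and $\chi_1$ satisfies condition (J). Since genericity is assumed, irreducibility of $M_{\CP_{\bm\Pi}}(\chi)$ is equivalent to $\chi_1$ satisfying (J).

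Next, I invoke Proposition~\ref{pr:0} with $\mu=\chi_1$, a linear functional on $\fh_{\bm\Pi^1}$. It says that (J) for $\chi_1$ is equivalent to irreducibility of the $\bar\fg_1$-module $M_{\bm\Pi^1}(\chi_1)$. The extension $\overline{\chi_1}\in\bar\fh_1^*$ appearing in the hypothesis is exactly the $\bar\mu$ of that subsection. So the assumption $(\overline{\chi_1}+\rho)(\beta^\vee)\neq0$ for all $\beta\in\hat\Delta_1$ is precisely the hypothesis of Corollary~\ref{corg1}. That corollary then gives: $M_{\bm\Pi^1}(\chi_1)$ is irreducible if and only if $(\bar\Delta_1)_{\overline{\chi_1}}=\emptyset$.

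Chaining the three equivalences yields the claim. The only point needing care is bookkeeping: one should check that the $\chi_1$ entering the definition of nonsingularity is the same restriction to $\fh_{\bm\Pi^1}$ used in Proposition~\ref{pr:0} and Corollary~\ref{corg1}, and that $\rho$ and the dot action refer to $\bar\Delta_1$. Both hold by the definitions above, so I expect no real obstacle.
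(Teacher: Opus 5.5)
Your proposal is correct and follows the paper's own argument: the paper presents this corollary as an immediate consequence of Theorem~\ref{thm:main} and Corollary~\ref{corg1}. Proposition~\ref{pr:0} is used implicitly there to identify condition (J) for $\chi_1$ with irreducibility of $M_{\bm\Pi^1}(\chi_1)$, which is exactly how you chain the equivalences.
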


\subsection{Character formulas for irreducible highest weight $\CC_N\fg$-modules}\label{sec:chartak} Let $\chi$ be a linear functional on $\CC_N\fh$.
In this subsection, we give character formulas for almost all irreducible highest weight $\CC_N\fg$-modules $V_{\CC_N\b}(\chi)$. 

 If $\chi=0$, then $V_{\CC_N\b}(\chi)$ is the one-dimensional trivial $\CC_N\fg$-module and hence $\textrm{char}\,V_{\CC_N\b}(\chi)=e(0)$. Henceforth, assume that $\chi\neq 0$.
For every $\alpha\in \Pi$, define
\[\chi_\alpha=\max\{\,i+1\mid 0\le i\le N,\ 
\chi(\alpha^\vee(i))\neq0\,\}\]
with the convention that $\chi_\alpha=0$  if $\chi(\alpha^\vee(j))= 0$ for all $j=0,1,\dots,N$. Set 
\[
M=\max\{\chi_\alpha\mid \alpha\in \Pi\}-1.
\]
Then 
\begin{align}\label{Pichi}
    \bm\Pi(\chi)=(\Pi(\chi)_0,\Pi(\chi)_1,\dots,\Pi(\chi)_{M+1})
\end{align}
is a  genuine $(M+2)$-partition of $\Pi$, where 
\[\Pi(\chi)_{i}=\{\alpha\in\Pi\mid \chi_\alpha=i\}\quad \textrm{for }i=0,1,\dots,M+1.\]

Associated with the $(M+2)$-partition $\bm\Pi(\chi)$ of $\Pi$, one can define 
$\Delta_i(\chi)$, $\bar\Delta_i(\chi)$, $\hat\Delta_i(\chi)$, and $\hat\Delta_i(\chi)^\pm$ for $i=0,1,\dots,M+1$ analogously to  the corresponding objects in Subsection 3.2.
We also obtain the  (maximal) parabolic subalgebra of $\CC_{M}\fg$ of type $\bm\Pi(\chi)$ as follows: 
\[\CP_{\bm\Pi(\chi)}=\CC_M\b\oplus\sum_{j=0}^{M+1}\sum_{\alpha\in\hat\Delta_j(\chi)^+}\sum_{k=j}^{M}\C x_{-\alpha}(k).\]

As usual, write $\chi|_{\fh_{\bm\Pi(\chi)}}$ for the restriction of $\chi$ to \[\fh_{\bm\Pi(\chi)}=\sum_{i=1}^{M+1}\sum_{\alpha\in\Pi(\chi)_i}\sum_{j=0}^{i-1}\C\alpha^\vee(j).\] 

\begin{prpt}\label{pr:mirriso}
    Assume that $\chi|_{\fh_{\bm\Pi(\chi)}}$ is nonsingular. Then 
    \[V_{\CC_N\b}(\chi)\cong \pi_{N,M}^{-1}\left(M_{\CP_{\bm\Pi(\chi)}}(\chi|_{\fh_{\bm\Pi(\chi)}})\right)\]
    as $\CC_N\g$-modules. Consequently,
    \[ \mathrm{char}\,V_{\CC_N\b}(\chi)=\mathrm{char}\,M_{\CP_{\bm\Pi(\chi)}}(\chi|_{\fh_{\bm\Pi(\chi)}}).\]
\end{prpt}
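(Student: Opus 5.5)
The plan is to show that the module $W:=\pi_{N,M}^{-1}\bigl(M_{\CP_{\bm\Pi(\chi)}}(\chi|_{\fh_{\bm\Pi(\chi)}})\bigr)$ is an irreducible highest weight $\CC_N\fg$-module of highest weight $\chi$. Since an irreducible highest weight module is determined by its highest weight, this forces $W\cong V_{\CC_N\b}(\chi)$. The character identity then follows at once, because pulling back along $\pi_{N,M}$ does not change the $\fh$-action (here $\fh=\fh\otimes 1$ is mapped identically).

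The first step is to check that $\bm\Pi(\chi)$ is genuine, that $\chi|_{\fh_{\bm\Pi(\chi)}}$ is a character of $\CP_{\bm\Pi(\chi)}$, and that the highest weight of $M:=M_{\CP_{\bm\Pi(\chi)}}(\chi|_{\fh_{\bm\Pi(\chi)}})$, viewed as a functional on $\CC_M\fh$, is the functional induced by $\chi$. Genuineness holds by the choice of $M$. The generator $v$ of $M$ is annihilated by $\CC_M\fn^+\subseteq\CP_{\bm\Pi(\chi)}$. The algebra $\CC_M\fh$ acts on $v$ through the character of $\CP_{\bm\Pi(\chi)}$, and this character vanishes on $[\CP,\CP]\cap\CC_M\fh=\fh_{\bm\Pi(\chi)}^\perp$, as in the proof of Lemma \ref{le:pp}. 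So I must verify that $\chi(\alpha^\vee(j))=0$ for $\alpha\in\Pi(\chi)_i$ and $i\le j\le M$. This is exactly the definition of $\chi_\alpha=i$, which makes $\chi(\alpha^\vee(j))=0$ for all $j\ge i$. Since the $\alpha^\vee(j)$ span $\CC_M\fh$, the weights agree on all of $\CC_M\fh$.

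For the components $h(j)$ with $M<j\le N$, the pullback acts by $0$. I need $\chi(\alpha^\vee(j))=0$ for these $j$, and this holds because $\chi_\alpha\le M+1$ for every $\alpha\in\Pi$. Hence the pullback is a highest weight $\CC_N\fg$-module whose highest weight is exactly $\chi$, as Lemma \ref{pullback} requires.

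For irreducibility, the hypothesis that $\chi|_{\fh_{\bm\Pi(\chi)}}$ is nonsingular and Theorem \ref{thm:main} (applied with $N$ replaced by $M$ and the genuine partition $\bm\Pi(\chi)$) give that $M$ is irreducible. Lemma \ref{pullback} then gives that $W$ is irreducible, since $\pi_{N,M}$ is surjective.

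The only delicate point is the bookkeeping in the first step, namely the identification of characters of $\CP_{\bm\Pi(\chi)}$ with elements of $\fh_{\bm\Pi(\chi)}^*$ through the complement $\fh^\perp_{\bm\Pi(\chi)}$. There I will make explicit that the extension by zero on $\fh^\perp$ coincides with $\chi$. Everything else is a formal consequence of Theorem \ref{thm:main}.
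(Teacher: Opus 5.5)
Your proposal is correct and is essentially the paper's argument: the paper's proof just cites Lemma \ref{pullback} and Theorem \ref{thm:main}. Your explicit check that the pulled-back highest weight equals $\chi$ (namely that $\chi$ vanishes on $\fh^\perp_{\bm\Pi(\chi)}$ and on $h(j)$ for $j>M$) is the bookkeeping the paper leaves implicit. One minor point: avoid using the letter $M$ for both the integer and the induced module.
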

\begin{proof}
The assertion follows from Lemma \ref{pullback} and Theorem \ref{thm:main}.
\end{proof}

\begin{thmt}\label{th:chartak}
Assume that $\chi|_{\fh_{\bm\Pi(\chi)}}$ is nonsingular. Then 
\begin{align*}
\mathrm{char}\, V_{\CC_N\b}(\chi)= e(\chi|_{\fh})\prod_{i=1}^{M+1}\prod_{\alpha\in\hat\Delta_i(\chi)^-}(1-e(\alpha))^{-i}.
\end{align*}
\end{thmt}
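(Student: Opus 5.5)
By Proposition~\ref{pr:mirriso}, under the nonsingularity hypothesis we already have
\[
\mathrm{char}\,V_{\CC_N\b}(\chi)=\mathrm{char}\,M_{\CP_{\bm\Pi(\chi)}}\bigl(\chi|_{\fh_{\bm\Pi(\chi)}}\bigr),
\]
because pulling back along $\pi_{N,M}$ does not change the $\fh$-weight spaces. The theorem therefore comes down to computing the character of the parabolically induced $\CC_M\fg$-module $M_{\CP_{\bm\Pi(\chi)}}(\psi)$, where $\psi=\chi|_{\fh_{\bm\Pi(\chi)}}$. We do this with the Poincaré--Birkhoff--Witt theorem.

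First, Remark~\ref{remdec} applied to $\CP=\CP_{\bm\Pi(\chi)}$ (so $d_S=D_{\bm\Pi(\chi)}$) gives a vector space complement
\[
\fn^-_{\bm\Pi(\chi),S}=\sum_{i=1}^{M+1}\sum_{\alpha\in\hat\Delta_i(\chi)^+}\sum_{j=0}^{i-1}\C x_{-\alpha}(j)
\]
to $\CP_{\bm\Pi(\chi)}$ in $\CC_M\fg$. Summands with $i=0$ vanish since the inner range is empty. Elements with $j>M$ do not occur, since $i-1\le M$.

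This complement need not be a subalgebra. Even so, PBW for the pair $\CP\subseteq\CC_M\fg$ gives a linear isomorphism
\[
\CU(\CC_M\fg)\cong S(\fn^-_{\bm\Pi(\chi),S})\otimes\CU(\CP),
\]
taking ordered monomials in a fixed basis of the complement times $\CU(\CP)$. This map is compatible with the $\fh$-adjoint grading. Hence
\[
M_{\CP}(\psi)\cong S(\fn^-_{\bm\Pi(\chi),S})\otimes\C v_\psi
\]
as $\fh$-modules, with $v_\psi$ of weight $\chi|_\fh$. Here $\psi$ and $\chi$ agree on $\fh$, since $\fh\subseteq \fh_{\bm\Pi(\chi)}\oplus\fh^\perp_{\bm\Pi(\chi)}$ and $\chi$ vanishes on $\alpha^\vee(j)$ for $j\ge\chi_\alpha$. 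More precisely, $\chi|_\fh$ means the weight $h\mapsto\chi(h(0))$.

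Next, count weights. Each root vector $x_{-\alpha}(j)$ has $\fh$-weight $-\alpha$. For $\alpha\in\hat\Delta_i(\chi)^+$ there are exactly $i$ such vectors, with $j=0,\dots,i-1$. Each contributes a factor $(1-e(-\alpha))^{-1}$ to the character of the symmetric algebra. Writing $\hat\Delta_i(\chi)^-=-\hat\Delta_i(\chi)^+$, we get
\[
\mathrm{char}\,M_{\CP}(\psi)=e(\chi|_\fh)\prod_{i=1}^{M+1}\prod_{\alpha\in\hat\Delta_i(\chi)^-}(1-e(\alpha))^{-i},
\]
which is the stated formula.

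The only step needing care is the PBW factorization with a non-subalgebra complement. The weight spaces are finite-dimensional because all weights lie in $-Q^+\setminus\{0\}$, so the product is a well-defined formal series. I would justify the factorization by ordering a basis of $\CC_M\fg$ so that basis elements of the complement precede those of $\CP$, and then applying PBW directly. Everything else is bookkeeping.
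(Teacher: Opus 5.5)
Your proposal is correct and follows essentially the paper's route. You reduce via Proposition~\ref{pr:mirriso} to the induced module and compute its character by PBW over the complement $\fn^-_{\bm\Pi(\chi),S}$ of $\CP_{\bm\Pi(\chi)}$; the paper does the same, writing the symmetric algebra as $\bigotimes_{i,j} S(\fg_{i,j})$, and your check that the character $\chi|_{\fh_{\bm\Pi(\chi)}}$ of $\CP_{\bm\Pi(\chi)}$ agrees with $\chi$ on $\fh$ makes explicit a point the paper leaves implicit.
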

\begin{proof}
By Proposition \ref{pr:mirriso}, it suffices to compute the character formula of the parabolically induced  $\CC_M\fg$-module $M_{\CP_{\bm\Pi(\chi)}}(\chi|_{\fh_{\bm\Pi(\chi)}})$.
For $i=1,2,\dots,M+1$ and $j=0,1,\dots,i-1$, set
\begin{align*}
    \fg_{i,j}=\sum_{\alpha\in\hat\Delta_i(\chi)^-}\C x_{\alpha}(j) \subseteq \CC_M\fg.
\end{align*}
Since $\fg_{i,j}$ is stable under the adjoint action of $\fh$, its symmetric algebra $S(\fg_{i,j})$ is naturally an $\fh$-module.  By the Poincaré-Birkhoff-Witt theorem, there is an $\fh$-module isomorphism
\begin{align}\label{eq:chartak}
    M_{\CP_{\bm\Pi(\chi)}}(\chi|_{\fh_{\bm\Pi(\chi)}})\cong \C_{\chi|_\fh}\otimes(\bigotimes_{i=1}^{M+1}\bigotimes_{j=0}^{i-1}~S(\fg_{i,j})).
\end{align}
Hence
\begin{align*}
   \textrm{char}\, M_{\CP_{\bm\Pi(\chi)}}(\chi|_{\fh_{\bm\Pi(\chi)}})=e(\chi|_{\fh})\prod_{i=1}^{M+1}\prod_{j=0}^{i-1}\textrm{char}\,S(\fg_{i,j}).
\end{align*} 

For each $i=1,\dots,M+1$ and $j=0,\dots,i-1$, the
$\fh$-weight of $x_\alpha(j)$ is $\alpha$. Therefore,
\[
\operatorname{char}S(\fg_{i,j})
=
\prod_{\alpha\in\hat\Delta_i(\chi)^-}
(1-e(\alpha))^{-1}.
\]
It follows that
\begin{align*}
\operatorname{char}V_{\CC_N\b}(\chi)
&=
e(\chi|_\fh)
\prod_{i=1}^{M+1}
\prod_{j=0}^{i-1}
\prod_{\alpha\in\hat\Delta_i(\chi)^-}
(1-e(\alpha))^{-1}\\
&=
e(\chi|_\fh)
\prod_{i=1}^{M+1}
\prod_{\alpha\in\hat\Delta_i(\chi)^-}
(1-e(\alpha))^{-i}.
\end{align*}
\end{proof}

\section{Proof of Theorem \ref{thm:main}: the irreducibility criterion}\label{sec:5}
In this section, let $\bm\Pi$ be a genuine  $(N+2)$-partition of $\Pi$
and let $\chi$ be a  character of $\mathcal{P}_{\bm\Pi}$, as in Theorem \ref{thm:main}.
\subsection{A basis of $M_{\CP_{\bm\Pi}}(\chi)$} 
In this subsection, we construct a basis of $M_{\CP_{\bm\Pi}}(\chi)$. Recall from Remark \ref{remdec} that 
\[\CC_N\fg=\fn_{\bm\Pi}^-\oplus\CP_{\bm\Pi},\]
where \begin{align*}
\fn_{\bm\Pi}^-:=\fn_{\bm\Pi,\widetilde{\Delta}_{\CP_{\bm\Pi}}}^-=\sum_{i=1}^{N+1}\sum_{\alpha\in \hat\Delta_i^-}\sum_{j=0}^{i-1}\C x_{\alpha}(j).
\end{align*}
For convenience, we introduce the  notation
\[
\tilde\Delta^-:=\bigcup_{i =1}^{N+1}\tilde\Delta_i^-\qquad (\tilde\Delta_i^-:=\hat\Delta_i^-\times \{0,1,\dots,i-1\}).
\]
Then the set
\[\mathcal{B}:=\{x_{\alpha}(j)\in \CC_N\fg\mid (\alpha,j)\in \tilde\Delta^-\}\] forms a basis of $\fn_{\bm\Pi}^-$.

To apply the Poincaré-Birkhoff-Witt theorem, we  introduce a total order on this basis $\mathcal{B}$.
Recall that there is a partial order $``\succeq"$ on $\Delta$ such that
\begin{align*}
    \alpha\succeq\beta \iff \alpha-\beta\in Q^+.
\end{align*}
Fix a total order $\geq_\Delta$ on $\Delta$ extending $``\succeq"$. We define a total order $``\geq "$ on $\mathcal{B}$ by declaring that  $x_{\alpha_1}(j_1)>x_{\alpha_2}(j_2)$, where  $(\alpha_1,j_1)\in \tilde\Delta_{i_1}^-,~(\alpha_2,j_2)\in \tilde\Delta_{i_2}^-$,  if  one of the following conditions holds:
\begin{enumerate}
    \item $i_1>i_2$;
    \item $i_1=i_2$ and $j_1>j_2$; and
    \item $i_1=i_2$, $j_1=j_2$, and $\alpha_1>_{\Delta}\alpha_2$.
\end{enumerate}

Let \[\mathcal{M}=\{\{m_{\alpha,d}\}_{(\alpha,d)\in \tilde\Delta^-}\mid m_{\alpha,d}\in \BN \text{ for }(\alpha,d)\in \tilde\Delta^-\}.\]
For every $M=\{m_{\alpha,d}\}_{(\alpha,d)\in \tilde\Delta^-}\in\mathcal{M}$, we write
\begin{align}\label{eq}
    X_M:=\prod_{(\alpha,d)\in\tilde\Delta^-}x_{\alpha}(d)^{m_{\alpha,d}},
\end{align}
where the product is taken according to the total order  $\geq$ on $\mathcal{B}$. That is, if 
\[
x_{\alpha_1}(d_1)>x_{\alpha_2}(d_2)>\cdots>x_{\alpha_k}(d_k)
\]
are the elements of $\mathcal{B}$ appearing with nonzero multiplicities in
$M$, then
\[
X_M=
x_{\alpha_1}(d_1)^{m_{\alpha_1,d_1}}
x_{\alpha_2}(d_2)^{m_{\alpha_2,d_2}}
\cdots
x_{\alpha_k}(d_k)^{m_{\alpha_k,d_k}}.
\]
By the Poincar\'e-Birkhoff-Witt theorem and the vector space
decomposition $\CC_N\fg=\fn_{\bm\Pi}^-\oplus\CP_{\bm\Pi},$
the set
\begin{align}\label{eq:basispara}
\{X_Mv_\chi\mid M\in\mathcal M\}
\end{align}
forms a basis of $M_{\CP_{\bm\Pi}}(\chi)$.

For later use, we introduce some notation.  For every $M=\{m_{\alpha,d}\}_{(\alpha,d)\in \tilde\Delta^-}\in\mathcal{M}$, we set
\begin{align*}
    \Delta(M)=\{\alpha\in\Delta^-
\mid
m_{\alpha,d}\neq0\text{ for some }d\}\quad \textrm{and}\quad |M|=\sum_{(\alpha,d)\in \tilde\Delta^-}m_{\alpha,d}\in\BN.
\end{align*}
 In addition,  for every $(\beta,d')\in \tilde\Delta^-$ with $m_{\beta,d'}>0$, define
\[
M^{(\beta,d')} = \{m_{\alpha,d}^{(\beta,d')}\}_{(\alpha,d)\in\tilde\Delta^-}
\in\mathcal M
\]
by
\[
m_{\alpha,d}^{(\beta,d')}=m_{\alpha,d}-\delta_{\alpha,\beta}\delta_{d,d'}.
\]
Thus $M^{(\beta,d')}$ is obtained from $M$ by removing one copy of
$(\beta,d')$.
For $i=1,2,\dots,N+1$, set 
\[ \mathcal{M}_i=\{\{m_{\alpha,d}\}_{(\alpha,d)\in \tilde\Delta^-}\in \mathcal{M}\mid  m_{\alpha,d}=0\textrm{ unless }(\alpha,d)\in\tilde{\Delta}_i^-\}.\]
For every
\[
M=\{m_{\alpha,d}\}_{(\alpha,d)\in\tilde\Delta^-}\in\mathcal M
\]
and $i=1,2,\dots,N+1$, let $M_i\in\mathcal M_i$ be defined by 
\[
(M_i)_{\alpha,d} = \begin{cases}
m_{\alpha,d},
&(\alpha,d)\in\tilde\Delta_i^-,\\
0,&(\alpha,d)\notin\tilde\Delta_i^-.
\end{cases}
\]
By the definition of the total order on $\mathcal B$, we then have
\[
X_M = X_{M_{N+1}}X_{M_N}\cdots X_{M_1}.
\]

\subsection{Sufficient condition for irreducibility of $M_{\CP_{\bm\Pi}}(\chi)$}
In this subsection, we fix an integer $i=1,2,\dots,N+1$. Recall the subalgebra $\bar{\mathfrak{g}}_i$ of $\fg$ (see \eqref{bargi}). By taking restriction, $M_{\CP_{\bm\Pi}}(\chi)$ is a $\CC_N\bar{\mathfrak{g}}_i$-module, and we 
write
\[W_\chi(\bar{\mathfrak{g}}_i):=\mathcal{U}(\CC_N\bar{\mathfrak{g}}_i).v_{\chi}\]
for the $\CC_N\bar{\mathfrak{g}}_i$-submodule 
of $M_{\CP_{\bm\Pi}}(\chi)$ generated by $v_{\chi}$. 

Recall that $\chi_i$ is the restriction of $\chi$ to $\fh_{\bm\Pi^i}=\sum_{k=1}^i\sum_{\alpha\in \Pi_k}\sum_{j=0}^{k-1}\mathbb{C}\alpha^{\vee}(j)$. 
The following result concerns the irreducibility of the $\CC_N\bar{\mathfrak{g}}_i$-module $W_\chi(\bar{\mathfrak{g}}_i)$. 
\begin{thmt}\label{th:mainirr}
    The $\CC_N\bar{\mathfrak{g}}_i$-module $W_\chi(\bar{\mathfrak{g}}_i)$ is irreducible if $\chi_i$ is nonsingular. In particular, the $\CC_N\g$-module $M_{\CP_{\bm\Pi}}(\chi)$ is irreducible if $\chi$ is nonsingular.
 \end{thmt}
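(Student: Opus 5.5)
We argue by induction on $i$. For $i=1$, Proposition~\ref{giiso} identifies $W_\chi(\bar\fg_1)$ with the pullback $\pi_{N,0}^{-1}(M_{\bm\Pi^1}(\chi_1))$. By Proposition~\ref{pr:0}, condition (J) on $\chi_1$ makes $M_{\bm\Pi^1}(\chi_1)$ irreducible, and Lemma~\ref{pullback} then shows that the pullback is irreducible. Genericity is vacuous at this stage.

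For the inductive step, assume $W_\chi(\bar\fg_{i-1})$ is irreducible, and let $w\neq 0$ lie in a $\CC_N\bar\fg_i$-submodule of $W_\chi(\bar\fg_i)$. By Proposition~\ref{giiso} and the PBW basis \eqref{eq:basispara} (restricted to $\bar\fg_i$), write $w=\sum_M c_M X_{M_i}X_{M'}v_\chi$. Here $M_i\in\mathcal M_i$ and $X_{M'}$ only involves indices in $\tilde\Delta_k^-$ with $k<i$, so $X_{M'}v_\chi\in W_\chi(\bar\fg_{i-1})$. The aim is to show that $v_\chi\in\CU(\CC_N\bar\fg_i)w$, which gives irreducibility.

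The first reduction removes the level-$i$ part. Choose $M$ with $c_M\ne0$ such that $|M_i|$ is maximal, and among these, the $M_i$ that is extremal for the total order $\geq$. Act on $w$ by the ``dual'' positive elements $x_{-\alpha}(i-1-d)$ for $(\alpha,d)\in\tilde\Delta_i^-$, where $-\alpha\in\hat\Delta_i^+$. The key computation is
\[
[x_{-\alpha}(i-1-d),x_\alpha(d)]=\alpha^\vee(i-1)\cdot(\text{nonzero constant}),
\]
and $\alpha^\vee(i-1)$ acts on $v_\chi$ by $\chi(\alpha^\vee(i-1))\ne0$ by genericity. Any other bracket $[x_{-\alpha}(i-1-d),x_\beta(d')]$ has $t$-degree at least $i-1$ or produces an element of $\CP_{\bm\Pi}$ that kills or rescales the vectors in a triangular way. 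Elements of degree $\geq i$ coming from roots in $\bar\Delta_i$ lie in $\CP_{\bm\Pi}$ and kill $v_\chi$ modulo lower terms. After applying the product $\prod x_{-\alpha}(i-1-d)^{m_{\alpha,d}}$ dual to $M_i$, we therefore obtain a nonzero multiple of $\prod\chi(\alpha^\vee(i-1))^{m_{\alpha,d}}\,X_{M'}v_\chi$ plus terms whose level-$i$ part is strictly smaller, and the latter vanish by maximality. This is exactly the determinant-style leading term argument of \cite{W2} for Verma modules over Takiff algebras, carried out relative to the level-$i$ root block $\hat\Delta_i$. Once the level-$i$ part has been stripped off, we obtain a nonzero vector of $W_\chi(\bar\fg_{i-1})$, and the induction hypothesis gives $v_\chi$.

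The main obstacle is making the triangularity precise. We must define a filtration or order on $\mathcal M_i$, for instance by total $t$-degree together with the order $\geq$, under which applying $x_{-\alpha}(i-1-d)$ strictly lowers all non-leading terms. We must also check that commutators with the $X_{M'}$ factors, which involve $\bar\fg_{i-1}$ at degrees below $i-1$, do not regenerate level-$i$ monomials with a larger leading term. For the latter we would use that $d_{\bm\Pi}$-type additivity keeps $[\hat\Delta_i,\bar\Delta_{i-1}]\subseteq\hat\Delta_i$, together with a grading by the level-$i$ content of the weight, namely the $\Pi_i$-coefficients. Grading by the $\Pi_i$-height and then by $t$-degree should give the needed triangularity. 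The final claim then follows by taking $i=N+1$, since $W_\chi(\bar\fg_{N+1})=M_{\CP_{\bm\Pi}}(\chi)$.
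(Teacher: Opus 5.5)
Your base case and your closing step (a nonzero vector of the submodule lying in $W_\chi(\bar{\mathfrak g}_{i-1})$ regenerates $v_\chi$ by induction) are fine. The inductive step, however, has a genuine gap. The triangularity you defer as ``the main obstacle'' is the whole content of that step, and the mechanism you sketch for it is false.

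\emph{Cross-degree brackets.} Suppose $x_{-\alpha}(i-1-d)$, dual to a level-$i$ factor $x_\alpha(d)$, meets a level-$i$ factor $x_\beta(d')$ with $d'<d$. The bracket has $t$-degree $i-1-d+d'<i-1$. It can be $x_{\beta-\alpha}(i-1-d+d')$ with $\beta-\alpha\in\hat\Delta_i^-$, which lies in $\fn^-_{\bm\Pi}$, not in $\CP_{\bm\Pi}$. For $\beta=\alpha$ it is a Cartan element $(-\alpha)^\vee(i-1-d+d')$, which need not act by a scalar on $W_\chi(\bar{\mathfrak g}_{i-1})$.

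\emph{The diagonal term.} Even $(-\alpha)^\vee(i-1)$ is a scalar only on $W_\chi(\bar{\mathfrak g}_{i-1})$ (Lemma~\ref{le:basiccomm}(3)). When it is created to the left of other factors $x_\beta(0)$, it turns them into new level-$i$ factors $x_\beta(i-1)$.

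All such terms use one raising operator and at most one level-$i$ factor, exactly like your leading term. So they are not lower in $|M_i|$, and maximality of $|M_i|$ cannot dispose of them. Take $\mathfrak g=\mathfrak{sl}_2$, $N=1$, $\bm\Pi=(\emptyset,\emptyset,\{\gamma\})$, which gives a Verma module with $i=2$. Then
\[
x_\gamma(1)x_{-\gamma}(0)^m v_\chi=m\,\chi(\gamma^\vee(1))\,x_{-\gamma}(0)^{m-1}v_\chi-m(m-1)\,x_{-\gamma}(1)x_{-\gamma}(0)^{m-2}v_\chi ,
\]
and both terms have $m-1$ factors.

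Your suggested refinements do not help. For a weight vector $w$, the $\Pi_i$-content of $X_{M_i}$ is the same for every PBW term, since $X_{M'}$ has none. The $t$-degree is not a grading of the module, because $\chi(\alpha^\vee(i-1))\neq0$. Finally, after the full dual product every PBW term of $w$ lands in $W_\chi(\bar{\mathfrak g}_{i-1})$, so you would have to rule out cancellation among all these contributions, and nothing in the sketch does that.

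The paper avoids both problems.
\begin{itemize}
\item It never applies a whole dual product. It takes a weight vector $v$ of maximal weight in the submodule, so it only needs one fact (Proposition~\ref{prop:destoi-1}): some \emph{single} $x_\alpha(k)$ with $\alpha\in\hat\Delta_i^+$ fails to annihilate any $v\notin W_\chi(\bar{\mathfrak g}_{i-1})$.
\item It works one $t$-degree at a time. Here $k$ is the smallest degree carrying a level-$i$ factor, and the raising operator is $x_{-\alpha_d}(i-1-k)$. Brackets with degree-$k$ factors then land in degree exactly $i-1$. There Lemma~\ref{le:basiccomm} kills the $\bar\Delta_{i-1}$- and $\hat\Delta_i^+$-components and makes the Cartan part the scalar $\chi((-\alpha_d)^\vee(i-1))$. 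Brackets with higher-degree factors land in degree $\ge i$ and vanish.
\item The filtration is by the number $|M^k|$ of degree-$k$ factors. The top component is used when $k=0$, which absorbs the remainder in Lemma~\ref{fmaj}; the second term in the display above is such a remainder. The bottom component is used when $k>0$.
\item The root $\alpha_d$ is chosen $\succeq$-minimal, so that no terms $x_{\beta-\alpha_d}(i-1)X_{M^{(\beta,k)}}$ occur.
\end{itemize}
Some degree-by-degree ordering of this kind is what your argument is missing; as written, the inductive step is not established.
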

The rest of this subsection is devoted to a proof of Theorem \ref{th:mainirr}. The case $i=1$ follows from Lemma \ref{pullback} (with $\eta=\pi_{N,0}$ and $V=M_{\bm\Pi^1}(\chi_1)$), Proposition \ref{giiso}, and Proposition \ref{pr:0}. 
Now we assume that $i\geq 2$.  For $0 \le s \le n$, set
\begin{align*}
    \mathbb{I}_{n,s}=\{\bm{j}=(j_1,j_2,\dots ,j_s)\in\mathbb{N}^s\mid 1\leq j_1<j_2<\cdots< j_s\leq n\},
\end{align*}
where $\mathbb{I}_{n,0}:=\{\emptyset\}.$
For every $\bm{j}=(j_1,\dots ,j_s)\in\mathbb{I}_{n,s}$, write $\hat{\bm{j}}=(j_1',\dots,j_{n-s}')\in \mathbb{I}_{n,n-s}$ such that
\[\{j_1,j_2,\dots,j_s,j_1',j_2',\dots, j_{n-s}'\}=\{1,2,\dots,n\}.\]

 We shall need the following technical fact (see \cite[Eq. (4.6)]{CGLW}).
\begin{lemt}\label{commu}
Let $\mathfrak{l}$ be a Lie algebra and let $u,u'\in \mathcal{U}{(\mathfrak{l})}$. Suppose that 
\begin{align*}
   u'=y_1y_2\cdots y_n
\end{align*}
with each $y_j\in \mathfrak{l}$, then one has that 
\begin{align*}
[u,u']=\sum_{s=1}^n(-1)^s\sum_{\substack{\bm{j}=(j_1,j_2,\dots,j_s)\in\mathbb{I}_{n,s}}}
    y_{j_1'}\cdots y_{j_{n-s}'}\mathrm{ad\,}y_{j_s}\cdots \mathrm{ad\,}y_{j_1}(u).
\end{align*}
\end{lemt}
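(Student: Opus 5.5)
The statement to prove is Lemma~\ref{commu}, the commutator formula in $\mathcal U(\mathfrak l)$. I will argue by induction on $n$, using only that $\mathrm{ad}\,y$ acts on $\mathcal U(\mathfrak l)$ as the derivation $v\mapsto yv-vy$.

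The key identity is that, for $y\in\mathfrak l$ and $v\in\mathcal U(\mathfrak l)$,
\[
vy=yv-\mathrm{ad}\,y(v).
\]
By repeated use of this identity I will move the letters $y_1,\dots,y_n$ of $u'$ from the right of $u$ to the left of $u$. The claim to establish first is the closed form
\[
uy_1\cdots y_n=\sum_{s=0}^n(-1)^s\sum_{\bm j\in\mathbb I_{n,s}} y_{j_1'}\cdots y_{j_{n-s}'}\,\mathrm{ad}\,y_{j_s}\cdots\mathrm{ad}\,y_{j_1}(u).
\]
The $s=0$ term is $y_1\cdots y_nu=u'u$. Subtracting it from both sides gives exactly the stated formula for $[u,u']=uu'-u'u$.

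I will prove the closed form by induction on $n$.

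\emph{Base case.} For $n=1$ it reads $uy_1=y_1u-\mathrm{ad}\,y_1(u)$, which is the key identity.

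\emph{Inductive step.} Assume the closed form for $y_1,\dots,y_{n-1}$, and multiply it on the right by $y_n$. Each resulting term has the shape
\[
y_{j_1'}\cdots y_{j'_{n-1-s}}\,v\,y_n,\qquad v=\mathrm{ad}\,y_{j_s}\cdots\mathrm{ad}\,y_{j_1}(u).
\]
Rewrite $vy_n=y_nv-\mathrm{ad}\,y_n(v)$. This splits each term in two:
\begin{enumerate}
\item the first piece puts $n$ into the complement $\hat{\bm j}$; since $n$ exceeds every other index, the left word becomes $y_{j_1'}\cdots y_{j'_{n-1-s}}y_n$, which is correctly ordered;
\item the second piece puts $n$ into $\bm j$ as its largest entry $j_{s+1}=n$, producing $-\mathrm{ad}\,y_n\,\mathrm{ad}\,y_{j_s}\cdots\mathrm{ad}\,y_{j_1}(u)$; the sign becomes $(-1)^{s+1}$ and the ad-operators remain in decreasing order, outermost being the largest index.
\end{enumerate}

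Every $\bm j\in\mathbb I_{n,s}$ arises exactly once in this way: according to whether $n\in\bm j$, it comes from a unique element of $\mathbb I_{n-1,s-1}$ or of $\mathbb I_{n-1,s}$. This bijection of index sets completes the induction.

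I expect no real obstacle here. The only care needed is in the bookkeeping of orderings: the unchanged $y$'s must stay in increasing order on the left, and the ad-operators must be applied with the largest index outermost, matching the stated expression $\mathrm{ad}\,y_{j_s}\cdots\mathrm{ad}\,y_{j_1}(u)$.
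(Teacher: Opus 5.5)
Your proof is correct and complete. The closed form for $uy_1\cdots y_n$, including the $s=0$ term $u'u$, follows by induction exactly as you describe. The rewriting $vy_n=y_nv-\mathrm{ad}\,y_n(v)$ leaves the left word $y_{j_1'}\cdots y_{j'_{n-1-s}}$ untouched. Splitting $\mathbb{I}_{n,s}$ according to whether $n\in\bm j$ then gives the required bijection, with the correct signs and orderings.

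The paper gives no proof of its own; it simply quotes the identity from \cite[Eq.~(4.6)]{CGLW}. Your induction is the standard self-contained verification of that cited formula.
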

 Set 
 \[\CC_N\bar{\g}_{\geq i}=\textrm{Span}_{\BC}\{x(n)\mid x\in \bar{\mathfrak{g}}_i,~n\ge i\},\]
 a subalgebra of $\CC_N\fg$.

\begin{lemt}\label{le:basiccomm}
Let $v\in W_\chi(\bar{\mathfrak{g}}_{i-1})$ and let $\alpha\in\hat\Delta_i^+$. Then we have 
\begin{enumerate}
    \item $\CC_N\bar\g_{\geq i-1}.v=0$, 
    \item $\CC_N\fg_{\alpha}.v=0$, and 
    \item $\alpha^{\vee}(i-1).v=\chi(\alpha^\vee(i-1))v$.
\end{enumerate}
\end{lemt}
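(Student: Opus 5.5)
Since $W_\chi(\bar{\mathfrak{g}}_{i-1})=\mathcal U(\CC_N\bar{\mathfrak g}_{i-1}).v_\chi$, I will reduce all three statements to the generator $v_\chi$. For each statement I show that a suitable subspace $\mathfrak a\subseteq\CC_N\fg$ kills (or acts by a scalar on) $v_\chi$, and that $[\mathfrak a,\CC_N\bar{\mathfrak g}_{i-1}]$ lies in a space that again annihilates $v_\chi$ (or in $\mathfrak a$ itself). An induction on the PBW length of $u\in\mathcal U(\CC_N\bar{\mathfrak g}_{i-1})$ then gives the claim for $v=u.v_\chi$. Alternatively, Lemma \ref{commu} can be used directly with $u'$ a PBW monomial in $\CC_N\bar{\mathfrak g}_{i-1}$. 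Throughout I use only the support of $\CP_{\bm\Pi}=\CP$ from Subsection 3.2, namely
\[
\widetilde\Delta_{\CP_{\bm\Pi}}=\widetilde\Delta_N^+\cup\{(-\beta,j)\mid \beta\in\Delta^+,\ D_{\bm\Pi}(\beta)\le j\le N\},
\]
together with the fact that $\chi$ vanishes on $[\CP,\CP]\cap\CC_N\fh=\h_{\bm\Pi}^\perp$ (Lemma \ref{le:pp}).

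For (1), the first task is to show that $\CC_N\bar{\mathfrak g}_{\ge i-1}\subseteq\CP$ acts on $v_\chi$ by $\chi$, and that this value is zero. The root vectors $x_{\pm\beta}(n)$ with $\beta\in\bar\Delta_{i-1}^+$ and $n\ge i-1\ge D_{\bm\Pi}(\beta)$ lie in $\CP$ and in $[\CP,\CP]$, so they kill $v_\chi$. The Cartan part $\alpha^\vee(n)$ with $\alpha\in\overline\Pi_{i-1}$, $n\ge i-1$ lies in $\h_{\bm\Pi}^\perp$, because $\alpha\in\Pi_k$ with $k\le i-1\le n$; hence it also kills $v_\chi$. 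Next, $\CC_N\bar{\mathfrak g}_{\ge i-1}$ is an ideal of $\CC_N\bar{\mathfrak g}_{i-1}$, since degrees only increase under the bracket. Therefore $\mathcal U(\CC_N\bar{\mathfrak g}_{i-1})\,\CC_N\bar{\mathfrak g}_{\ge i-1}\subseteq \CC_N\bar{\mathfrak g}_{\ge i-1}\,\mathcal U(\CC_N\bar{\mathfrak g}_{i-1})$ only after commuting in the opposite direction. Concretely, I will write $x\,u=u\,x+[x,u]$ with $[x,u]\in \mathcal U(\CC_N\bar{\mathfrak g}_{i-1})\CC_N\bar{\mathfrak g}_{\ge i-1}$ and induct on the length of $u$.

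For (2), take $\alpha\in\hat\Delta_i^+$. Then $\CC_N\fg_\alpha\subseteq\CC_N\fn^+$ kills $v_\chi$. The adjoint action of $\CC_N\bar{\mathfrak g}_{i-1}$ sends $\CC_N\fg_\alpha$ into the span of $\CC_N\fg_{\alpha+\gamma}$ with $\gamma\in\bar\Delta_{i-1}\cup\{0\}$. Since $\alpha$ has a positive coefficient on some root of $\Pi_i$, the root $\alpha+\gamma$ again has a positive coefficient there, so it is again in $\hat\Delta_i^+$ or is not a root. Hence $\mathfrak a=\sum_{\beta\in\hat\Delta_i^+}\CC_N\fg_\beta$ is stable under $\mathrm{ad}\,\CC_N\bar{\mathfrak g}_{i-1}$ and annihilates $v_\chi$, and the same induction applies.

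For (3), $\alpha^\vee(i-1)$ acts on $v_\chi$ by $\chi(\alpha^\vee(i-1))$, since $\alpha^\vee(i-1)\in\CC_N\fh\subseteq\CP$. It remains to check that $[\alpha^\vee(i-1),y(n)]=\gamma(\alpha^\vee)\,y(n+i-1)$ kills $v_\chi$ for $y\in\bar{\mathfrak g}_{i-1}$ of root $\gamma$; this element lies in $\CC_N\bar{\mathfrak g}_{\ge i-1}$. Hence
\[
\alpha^\vee(i-1)\,u.v_\chi=u\,\alpha^\vee(i-1).v_\chi+[\alpha^\vee(i-1),u].v_\chi ,
\]
and the commutator term, expanded via Lemma \ref{commu}, is a sum of terms $\mathcal U(\CC_N\bar{\mathfrak g}_{i-1})\cdot z$ with $z\in\CC_N\bar{\mathfrak g}_{\ge i-1}$ applied to $v_\chi$. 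Combined with the induction, these terms vanish. I expect the main obstacle to be bookkeeping the ordering in these commutator expansions, so that the annihilating elements always end up adjacent to $v_\chi$; the ideal property in (1) and the $\mathrm{ad}$-stability in (2) should handle this.
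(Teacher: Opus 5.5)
Your proposal is correct and follows essentially the same route as the paper. You reduce to the generator $v_\chi$ and, for (1), use that $\CC_N\bar\g_{\geq i-1}$ is an ideal of $\CC_N\bar\g_{i-1}$ annihilating $v_\chi$. For (2), you use that $\mathrm{ad}\,\CC_N\bar\g_{i-1}$ keeps $\CC_N\fg_\alpha$ inside $\sum_{\beta\in\hat\Delta_i^+}\CC_N\fg_\beta$, and you deduce (3) from (1) via the commutator expansion of Lemma \ref{commu}. Your explicit check that $\CC_N\bar\g_{\geq i-1}$ kills $v_\chi$ fills in a step the paper only asserts: root vectors lie in $[\CP,\CP]$, and Cartan elements lie in $\h_{\bm\Pi}^\perp$.
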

\begin{proof}
By linearity, it suffices to consider $v=u'v_\chi$, where 
\[u'=X_{M_{i-1}}X_{M_{i-2}}\cdots X_{M_1}\] for some $M_j\in\mathcal{M}_j$, $1\leq j\leq i-1$. 
Note that the assertion $(3)$ follows from assertion $(1)$ and Lemma $\ref{commu}$ with $u=\alpha^\vee(i-1)$. Then it suffices to prove the first two assertions.
For the first assertion, 
since $[\CC_N\bar\g_{\geq i-1},\CC_N\bar\g_{i-1}]\subseteq \CC_N\bar\g_{\geq i-1}$ and $\CC_N\bar\g_{\geq i-1}$ acts trivially on $v_\chi$, assertion $(1)$ follows immediately from Lemma \ref{commu} (with $u\in \CC_N\bar\g_{\geq i-1}$).

Every nonzero iterated commutator of $x_\alpha(k)$ with root
vectors occurring in $u'$ is again a root vector, and its root
belongs to $\hat\Delta_i^+$. Hence it annihilates $v_\chi$. Lemma \ref{commu}
therefore gives
\[
[x_\alpha(k),u']v_\chi=0.
\]
Since $x_\alpha(k)v_\chi=0$, we obtain
\[
x_\alpha(k)v=0.
\]
\end{proof}

For every $k=0,1,\dots,i-1$, we introduce the set
\begin{align*}
\mathcal{M}_{i,k}=\{\{m_{\alpha,d}\}_{(\alpha,d)\in \tilde\Delta^-}\in \mathcal{M}_i\mid  m_{\alpha,d}=0\textrm{ unless }d=k\}.
\end{align*}
With this notation, we have that $W_\chi(\bar{\mathfrak{g}}_i)$ is spanned by the vectors of the form
\begin{align}\label{spanele}
    X_{M^{i-1}}X_{M^{i-2}}\cdots X_{M^{0}}.w
\end{align}
for $w\in W_\chi(\bar{\mathfrak{g}}_{i-1})$, $M^{k}\in\mathcal{M}_{i,k}$ with $k=0,1,\dots,i-1$.
For a fixed $k=0,1,\dots,i-1$, one obtains an $\BN$-grading structure on $W_\chi(\bar{\mathfrak{g}}_i)$ given by $|M^k|$. That is,
\begin{align}\label{gradj}
W_\chi(\bar{\mathfrak{g}}_i)=\bigoplus_{n\in\BN}W_\chi(\bar{\mathfrak{g}}_i)_{(k,n)},
\end{align}
where $W_\chi(\bar{\mathfrak{g}}_i)_{(k,n)}$ is spanned by the elements 
\eqref{spanele} with $|M^k|=n$. 

\begin{lemt}\label{fmaj}
    Let $M\in\mathcal{M}_{i,k}$,  $w\in W_\chi(\bar{\mathfrak{g}}_{i-1})$, and  $-\alpha\in \Delta(M)$. Then we have 
    \begin{align*}
        x_{\alpha}(i-1-k)X_M.w=\ &m_{-\alpha,k}\chi(\alpha^\vee(i-1))X_{M^{(-\alpha,k)}}.w\notag\\
        &+\sum_{\substack{\beta\in \Delta(M),\\ \alpha+\beta\in \hat\Delta_i^-}}c_{\beta,\alpha}x_{\alpha+\beta}(i-1)X_{M^{(\beta,k)}}.w+\delta_{k,0}w',
    \end{align*}
     where $c_{\beta,\alpha}\in\C$ and $w'\in \sum_{n=0}^{|M|-2}W_\chi(\bar{\mathfrak{g}}_i)_{(k,n)}$.
\end{lemt}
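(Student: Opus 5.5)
We plan to commute $x_\alpha(i-1-k)$ past the ordered monomial $X_M$ using Lemma \ref{commu}, with $u=x_\alpha(i-1-k)$ and $u'=X_M=y_1\cdots y_n$, where $n=|M|$ and each $y_j=x_{\beta_j}(k)$ with $\beta_j\in\hat\Delta_i^-$. Since $\alpha\in\hat\Delta_i^+$, Lemma \ref{le:basiccomm}(2) gives $x_\alpha(i-1-k).w=0$. Therefore
\[
x_\alpha(i-1-k)X_M.w=\sum_{s=1}^n(-1)^s\sum_{\bm j\in\mathbb I_{n,s}}y_{j_1'}\cdots y_{j_{n-s}'}\,\mathrm{ad}\,y_{j_s}\cdots\mathrm{ad}\,y_{j_1}\bigl(x_\alpha(i-1-k)\bigr).w .
\]
The $\mathrm{ad}$-word has degree $i-1-k+sk$ and root $\alpha+\beta_{j_1}+\cdots+\beta_{j_s}$. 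We sort the terms according to $s$.

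For $s=1$, the iterated bracket is $[x_{\beta}(k),x_\alpha(i-1-k)]$, which has degree $i-1$. If $\beta=-\alpha$, it equals $-\alpha^\vee(i-1)$ up to the normalisation of the $\mathfrak{sl}_2$-triple. We then move $\alpha^\vee(i-1)$ to the right past the remaining factors $y$. The commutators $[\alpha^\vee(i-1),y]$ have degree $i-1+k\ge i-1$ and root in $\hat\Delta_i^-$. We must show these extra pieces either vanish or land in the stated error terms; for $k\geq 1$ they have degree $\ge i$. Then Lemma \ref{le:basiccomm}(3) gives the eigenvalue $\chi(\alpha^\vee(i-1))$. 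Collecting the $m_{-\alpha,k}$ equal factors, using the signs from Lemma \ref{commu} and the fact that all factors $x_\beta(k)$ with the same $k$ mutually commute up to higher-degree terms, yields the leading term $m_{-\alpha,k}\chi(\alpha^\vee(i-1))X_{M^{(-\alpha,k)}}.w$.

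If $\beta\ne-\alpha$ and $\alpha+\beta\in\Delta$, the root $\alpha+\beta$ must be checked to lie in $\hat\Delta_i^-$ or in $\hat\Delta_i^+\cup\bar\Delta_{i-1}$. In the latter cases, $x_{\alpha+\beta}(i-1)$ kills $w$ by Lemma \ref{le:basiccomm}(1),(2) after reordering. In the former case we obtain the term $c_{\beta,\alpha}x_{\alpha+\beta}(i-1)X_{M^{(\beta,k)}}.w$. Here we use that $\Delta(M)$ is the support and that reordering $x_{\alpha+\beta}(i-1)$ to the front only produces brackets of degree $\ge i-1+k$.

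For $s\ge 2$, the degree is $i-1+(s-1)k$. When $k\ge1$ this exceeds $i-1$, and the main claim is that all such terms vanish on $w$. The resulting root vector has root in $\hat\Delta_i^{\pm}$ or lies in $\CC_N\bar\g_{\ge i-1}$. Vectors of degree $\ge i$ with negative root in $\hat\Delta_i^-$ are zero in $M_{\CP_{\bm\Pi}}(\chi)$ modulo $\CP_{\bm\Pi}$, since $x_{\gamma}(d)\in\CP_{\bm\Pi}$ for $d\ge D_{\bm\Pi}(-\gamma)=i$. So we expect these terms to reduce to the action of $\CP_{\bm\Pi}$-elements that are pushed right and kill $w$ by Lemma \ref{le:basiccomm}, or that act by scalars only in degree $\le i-1$. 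This is why the error term $\delta_{k,0}w'$ appears only for $k=0$. When $k=0$, the terms with $s\ge2$ keep degree $i-1$ but remove at least two $y$'s, and hence lie in $\sum_{n\le|M|-2}W_\chi(\bar\g_i)_{(0,n)}$.

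The main obstacle is the bookkeeping for $k\ge1$: we must show rigorously that every leftover element of degree $\ge i$ (including Cartan elements $h(d)$ with $d\ge i$, which lie in $\h^\perp_{\bm\Pi}$ for roots in $\bar\Delta_i$ and act by zero on $v_\chi$) annihilates $w$ after commuting it past the remaining $y$'s. We would handle this by a downward induction on degree. Any element of $\CC_N\bar\g_i$ of degree $\ge i$ kills $W_\chi(\bar\g_i)$-type vectors built from $w$: the analogue of Lemma \ref{le:basiccomm}(1) at level $i$ holds because brackets only increase degree and $\CC_N\bar\g_{\ge i}$ kills $v_\chi$.
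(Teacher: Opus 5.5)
Your proposal follows the paper's proof essentially step for step: expand $x_\alpha(i-1-k)X_M.w$ by Lemma~\ref{commu}, discard $X_Mx_\alpha(i-1-k).w$ by Lemma~\ref{le:basiccomm}(2), and take the leading term and the $x_{\alpha+\beta}(i-1)$ terms from $s=1$. For $s\ge2$ you note that when $k\ge1$ the ad-word lies in $\CC_N\bar\g_{\geq i}$ and so kills $w$, and that when $k=0$ at least two degree-$0$ factors are used up; your level-$i$ analogue of Lemma~\ref{le:basiccomm}(1) is exactly what the paper's citation of part (1) tacitly needs.

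One simplification: in Lemma~\ref{commu} the ad-word already sits directly next to $w$ and the surviving factors keep their order. So nothing has to be commuted past the remaining $y$'s, and for $\beta=-\alpha$ the prefix is exactly $X_{M^{(-\alpha,k)}}$. This matters because your side remark that same-degree factors commute up to higher-degree terms is false for $k=0$, and it is not needed.
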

\begin{proof}
For convenience, set $n=|M|$ and write $X_M=x_{\alpha_1}(k)x_{\alpha_2}(k)\cdots x_{\alpha_n}(k)$ for some $\alpha_1,\alpha_2,\dots,\alpha_n\in\hat\Delta_i^-$ with $\alpha_1\geq_\Delta \alpha_2\geq_\Delta\cdots\geq_\Delta \alpha_n$. In view of Lemma \ref{commu} and the second assertion of Lemma \ref{le:basiccomm}, we have 
\begin{align}\label{xMf}
    x_{\alpha}(i-1-k)X_M.w=\sum_{s=1}^n(-1)^s\sum_{\substack{\bm{j}=(j_1,j_2,\dots,j_s)\in\mathbb{I}_{n,s}}}f(\bm{j}).w,
\end{align}
where 
\begin{align*}
    f(\bm{j})= x_{\alpha_{j_1'}}(k)\cdots x_{\alpha_{j_{n-s}'}}(k)\mathrm{ad\,}x_{\alpha_{j_s}}(k)\cdots \mathrm{ad\,}x_{\alpha_{j_1}}(k)(x_{\alpha}(i-1-k)).
\end{align*}
Assume that $\bm{j}\in \mathbb{I}_{n,s}$ for $s\geq 2$. It is obvious  that 
\begin{align}\label{eqs2}
    f(\bm{j}).w\in \sum_{n=0}^{|M|-2}W_\chi(\bar{\mathfrak{g}}_i)_{(k,n)}.
\end{align}
 In particular, when $k>0$, one has \[\mathrm{ad\,}x_{\alpha_{j_s}}(k)\cdots \mathrm{ad\,}x_{\alpha_{j_1}}(k)(x_{\alpha}(i-1-k))\in \CC_N\bar\fg_{\geq i}\] and hence
\begin{align}\label{eqs20}
    f(\bm{j}).w=0
\end{align}
by Lemma \ref{le:basiccomm} (1).

It remains to consider the case where  $\bm{j}=j_1\in \mathbb{I}_{n,1}$. In this case, we have
\begin{align}\label{eqexp=}
    f(\bm{j}).w=\begin{cases}
        0,&\mbox{if }\alpha_{j_1}+\alpha\notin \hat\Delta_i^-\cup\{0\},\\
        -\chi(\alpha^\vee(i-1))X_{M^{(-\alpha,k)}}.w, &\mbox{if }\alpha_{j_1}+\alpha=0
    \end{cases}
\end{align}
by Lemma \ref{le:basiccomm}. When $\alpha_{j_1}+\alpha\in \hat\Delta_i^-$, we obtain 
\begin{align}\label{eqs1}
    f(\bm{j}).w=\ &c_{\alpha_{j_1}}X_{M^{(\alpha_{j_1},k)}}x_{\alpha+\alpha_{j_1}}(i-1).w\notag\\
    =\ &c_{\alpha_{j_1}}x_{\alpha+\alpha_{j_1}}(i-1)X_{M^{(\alpha_{j_1},k)}}.w+\delta_{k,0}w'',
\end{align}
where $w''\in \sum_{n=0}^{|M|-2}W_\chi(\bar{\mathfrak{g}}_i)_{(k,n)}$ and $c_{\alpha_{j_1}}\in\mathbb{C}^{\times}$. Here, we have used Lemma \ref{le:basiccomm} (1) in the last equality. Therefore, combining \eqref{eqs2}-\eqref{eqs1}, the lemma follows immediately.
\end{proof}

\begin{prpt}\label{prop:destoi-1}
    Let $i>1$ and let $v\in W_\chi(\bar{\mathfrak{g}}_i)\backslash W_\chi(\bar{\mathfrak{g}}_{i-1})$. Assume that $\chi_i$ is nonsingular. Then there exists $(-\alpha,k)\in \tilde\Delta_i^-$ such that 
    \[x_{\alpha}(k).v\neq 0.\]
\end{prpt}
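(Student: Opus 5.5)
We expand $v$ in the spanning set \eqref{spanele}: write
\[
v=\sum X_{M^{i-1}}X_{M^{i-2}}\cdots X_{M^{0}}.w_{(M^{i-1},\dots,M^0)},\qquad w_{(\cdot)}\in W_\chi(\bar{\mathfrak g}_{i-1}),
\]
where we may assume, by PBW and Proposition~\ref{giiso}, that the vectors $X_{M^{i-1}}\cdots X_{M^0}.w$ with $w$ running over a basis of $W_\chi(\bar{\mathfrak g}_{i-1})$ are linearly independent. Since $v\notin W_\chi(\bar{\mathfrak g}_{i-1})$, some term has $(M^{i-1},\dots,M^0)\neq(0,\dots,0)$. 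Let $k$ be the \emph{largest} index such that some term with nonzero coefficient has $M^k\neq 0$. Among those terms, let $n$ be the maximal value of $|M^k|$, so the top component of $v$ in the grading \eqref{gradj} has degree $n\ge1$. Pick $-\alpha\in\Delta(M^k)$ for such a top term, choosing $\alpha$ extremal (say $-\alpha$ minimal for $\geq_\Delta$ among all roots occurring in the top-degree $M^k$'s, so that $\alpha$ is maximal). We aim to show that $x_\alpha(i-1-k).v\neq0$. Note that $(-\alpha,i-1-k)\in\tilde\Delta_i^-$ because $0\le i-1-k\le i-1$.

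First, $x_\alpha(i-1-k)$ has to be moved past $X_{M^{i-1}}\cdots X_{M^{k+1}}$. In the top terms these are trivial, since $M^{j}=0$ for $j>k$. For the lower terms, $k$ is maximal, so no such factors appear at all. Hence every term has the form $X_{M^k}X_{M^{k-1}}\cdots X_{M^0}.w$. The vector $X_{M^{k-1}}\cdots X_{M^0}.w$ is not in $W_\chi(\bar{\mathfrak g}_{i-1})$, so Lemma~\ref{fmaj} cannot be applied verbatim. Instead we apply its proof, i.e.\ Lemma~\ref{commu} together with Lemma~\ref{le:basiccomm}. Commutators of $x_\alpha(i-1-k)$ with $x_\beta(k')$ for $k'<k$ land in degree $i-1-k+k'<i-1$, giving root vectors of $\hat\Delta_i^{\pm}$ or degree-$(<i-1)$ Cartan elements. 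These terms either decrease $|M^k|$ or do not produce a $\chi(\alpha^\vee(i-1))$ factor. The only contribution in $k$-degree $n-1$ that is not of the form $x_{\alpha+\beta}(i-1)X_{\cdots}$ is
\[
m_{-\alpha,k}\,\chi(\alpha^\vee(i-1))\,X_{(M^k)^{(-\alpha,k)}}X_{M^{k-1}}\cdots X_{M^0}.w .
\]

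Next we isolate this term. We project $x_\alpha(i-1-k).v$ onto the graded component of $k$-degree $n-1$, and then onto PBW monomials whose $i$-th block in degree $i-1$ is not increased. The correction terms $c_{\beta,\alpha}x_{\alpha+\beta}(i-1)X_{M^{(\beta,k)}}$ contain the factor $x_{\alpha+\beta}(i-1)$. When $k<i-1$, this factor lies in a different PBW block (degree $i-1$ rather than $k$), so it is linearly independent from the principal terms. When $k=i-1$, we use the extremal choice of $\alpha$: here $\alpha+\beta$ with $\beta\in\Delta(M)$ strictly raises the root relative to the ordering, so the resulting monomials $X_{M'}$ differ from every $X_{(M^k)^{(-\alpha,k)}}$ arising from top terms. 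Hence the principal part
\[
\chi(\alpha^\vee(i-1))\sum m_{-\alpha,k}\,X_{(M^k)^{(-\alpha,k)}}\cdots .w
\]
is a nonzero combination of independent vectors. It is nonzero because $\chi$ is generic, which gives $\chi(\alpha^\vee(i-1))\neq0$ for $\alpha\in\hat\Delta_i^+$ when $i\ge2$, and because distinct $M^k$ containing $-\alpha$ give distinct $(M^k)^{(-\alpha,k)}$.

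The main obstacle is this independence bookkeeping: showing that neither the correction terms $x_{\alpha+\beta}(i-1)X_{M^{(\beta,k)}}$ nor the $\delta_{k,0}$ lower-order terms can cancel the principal term. The first thing to try is a lexicographic leading-term argument on PBW monomials, with the order on $\mathcal B$ refined by the choice of extremal $\alpha$ and maximal $k$. If direct cancellation control fails, the fallback is an induction on $n$ that applies several $x_\alpha$'s in succession.
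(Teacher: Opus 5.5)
There is a genuine gap. You flag the independence bookkeeping as the main obstacle and leave it open, but with your choices it cannot be closed, because the isolation claim it rests on is false. You take $k$ \emph{largest}, so $x_\alpha(i-1-k)$ must pass the lower-degree factors $X_{M^{k-1}}\cdots X_{M^0}$. The resulting commutators (Cartan elements $\alpha^\vee(j)$ with $j<i-1$, or low-degree root vectors of $\hat\Delta_i^{\pm}$), applied to terms of $v$ with $|M^k|=n-1$, land in exactly the $k$-degree-$(n-1)$ component where your principal term lives. That they ``do not produce a factor $\chi(\alpha^\vee(i-1))$'' does not prevent cancellation.

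Concretely, let $\fg=\mathfrak{sl}_2$, $\Pi=\{\alpha\}$, $N=1$ and $\bm\Pi=(\emptyset,\emptyset,\{\alpha\})$. Then $\CP_{\bm\Pi}=\CC_1\b$, $i=2$, and $\chi$ is nonsingular iff $\chi(\alpha^\vee(1))\neq0$. Write $f_j=x_{-\alpha}(j)$, $h_j=\alpha^\vee(j)$ and $v=f_1^2v_\chi+bf_1f_0v_\chi+cf_0^2v_\chi$. Your recipe gives $k=1$, $n=2$ and the operator $x_\alpha(0)$, and
\[
x_\alpha(0)v=\bigl(2\chi(h_1)+b(\chi(h_0)-2)\bigr)f_1v_\chi+\bigl(b\chi(h_1)+c(2\chi(h_0)-2)\bigr)f_0v_\chi .
\]
This is $0$ for $\chi(h_0)=0$, $\chi(h_1)=1$, $b=1$, $c=\tfrac12$, whereas $x_\alpha(1)v=f_0v_\chi\neq0$. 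The offending summand $b(\chi(h_0)-2)f_1v_\chi$ comes from $[x_\alpha(0),x_{-\alpha}(0)]=h_0$ acting on the non-top term $bf_1f_0v_\chi$.

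Your treatment of $k=i-1$ fails too, even with no lower-degree factors. Take $\fg=\mathfrak{sl}_3$, $N=1$, $\bm\Pi=(\emptyset,\emptyset,\Pi)$ and $v=x_{-\alpha_2}(1)x_{-\alpha_1}(1)v_\chi+b\,x_{-\alpha_1-\alpha_2}(1)v_\chi$, with $-\alpha_1<_\Delta-\alpha_2$ so that you pick $\alpha=\alpha_1$. Then $x_{\alpha_1}(0)v=\bigl(\chi(\alpha_1^\vee(1))+bc\bigr)x_{-\alpha_2}(1)v_\chi$ with $c\neq0$, which vanishes for suitable $b$. When $k=i-1$, the correction terms of Lemma~\ref{fmaj} coming from the layer $|M^k|=n-1$ stay in $k$-degree $n-1$, and extremality of $\alpha$ among top-layer roots says nothing about them.

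What is missing are the two choices made in the paper.
\begin{itemize}
\item Take $k$ \emph{smallest}. Then $v=\sum_M u_MX_Mw_M$ with $M\in\mathcal M_{i,k}$, $w_M\in W_\chi(\bar\fg_{i-1})$, and $u_M$ built only from factors of degree $>k$. Every commutator of $x_\alpha(i-1-k)$ with $u_M$ has degree $\ge i$ and acts by zero, so Lemma~\ref{fmaj} applies verbatim to $X_Mw_M$.
\item Take the layer $d=\max D$ if $k=0$, since the remainder $\delta_{k,0}w'$ has degree $\le|M|-2$. Take $d=\min D$ if $k>0$, so that no other layer reaches degree $d-1$.
\end{itemize}
Then choose $-\alpha$ minimal for $\succeq$ within that layer, which kills its correction terms. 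The degree-$(d-1)$ component of $x_\alpha(i-1-k)v$ is then $\chi(\alpha^\vee(i-1))\sum_{|M|=d}m_{-\alpha,k}u_MX_{M^{(-\alpha,k)}}w_M\neq0$.
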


\begin{proof}
Choose the smallest integer $k\in\{0,1,\dots,i-1\}$ such that
\[
v=\sum_{M\in\mathcal A}u_MX_Mw_M,
\]
where $\mathcal A\subseteq\mathcal M_{i,k}$ is finite with
$|M|>0$ for some $M\in\mathcal A$, $0\neq w_M\in W_\chi(\bar{\mathfrak g}_{i-1})$, and
\[
0\neq u_M\in
\operatorname{span}_{\C}
\left\{
X_{M^{i-1}}\cdots X_{M^{k+1}}
\mid
M^j\in\mathcal M_{i,j},\ k<j\le i-1
\right\}.
\]

Consider the $\BN$-grading of $W_{\chi}(\bar{\fg}_i)$ defined in
\eqref{gradj} with respect to the above choice of $k$. For
$u\in W_\chi(\bar{\mathfrak{g}}_i)$ and $n\in\BN$, denote by
$u_{(n)}$ its projection onto
$W_\chi(\bar{\mathfrak{g}}_i)_{(k,n)}$. Set
\[
D=\{n>0\mid v_{(n)}\neq0\},
\]
and define
\[
d=
\begin{cases}
\max D,& k=0,\\
\min D,& k>0.
\end{cases}
\]

Choose
\[
\alpha_d\in
\{\beta\in\Delta(M)\mid M\in\mathcal A\te{ and }|M|=d\}
\]
minimal with respect to the partial order $``\succeq"$. If
\[
-\alpha_d+\beta\in\hat\Delta_i^-,
\]
then
\[
\alpha_d-\beta\in Q^+\setminus\{0\},
\]
so that $\alpha_d\succ\beta$, contradicting the minimality of $\alpha_d$. Since every root vector occurring in $u_M$ has $t$-degree strictly greater than $k$, every nonzero commutator term arising from $ [x_{-\alpha_d}(i-1-k),u_M] $
has $t$-degree at least $i$. Hence, by Lemma \ref{le:basiccomm}(1), \[
[x_{-\alpha_d}(i-1-k),u_M]X_Mw_M=0.
\]
Therefore,
\[
x_{-\alpha_d}(i-1-k)v = \sum_{M\in\mathcal A}
u_Mx_{-\alpha_d}(i-1-k)X_Mw_M.
\]
When $k=0$, we have $d=\max D$, and the remainder term in
Lemma \ref{fmaj} has degree at most $d-2$. When $k>0$, the
remainder term vanishes, and since $d=\min D$, components of degree greater than $d$ cannot contribute to degree $d-1$. Therefore,
\begin{align}\label{d-1}
(x_{-\alpha_d}(i-1-k)v)_{(d-1)}= \sum_{\substack{M\in\mathcal A\\|M|=d}}
m_{\alpha_d,k}\chi((-\alpha_d)^\vee(i-1))
u_MX_{M^{(\alpha_d,k)}}w_M.
\end{align}

Since $\chi_i$ is nonsingular and
$-\alpha_d\in\hat\Delta_i^+$,
\[
\chi((-\alpha_d)^\vee(i-1))\neq0.
\]
By the choice of $\alpha_d$, we have $m_{\alpha_d,k}>0$ for some
$M\in\mathcal A$ with $|M|=d$. Moreover, the map
\[
M\longmapsto M^{(\alpha_d,k)}
\]
is injective on such $M$, and the PBW decomposition in
Subsection 5.1 shows that the corresponding summands cannot cancel.
Hence the right-hand side of \eqref{d-1} is nonzero, and consequently
\[
x_{-\alpha_d}(i-1-k).v\neq0.
\]
Since
\[
(\alpha_d,i-1-k)\in\tilde\Delta_i^-,
\]
the assertion follows.
\end{proof}
\noindent\textbf{Proof of  Theorem \ref{th:mainirr}:} We now complete the proof of Theorem \ref{th:mainirr} by induction on
$i$. The case $i=1$ has already been proved. Let $i>1$, assume that
$\chi_i$ is nonsingular, and let $S$ be a nonzero
$\CC_N\bar{\mathfrak g}_i$-submodule of
$W_\chi(\bar{\mathfrak g}_i)$. Choose a nonzero weight vector
$v\in S$ such that $\mathrm{wt\,}v'-\mathrm{wt\,}v\notin Q^+$ for all  nonzero weight vectors $v'\in S$ with $\mathrm{wt\,}v'\neq \mathrm{wt\,}v$.
If $v\notin W_\chi(\bar{\mathfrak g}_{i-1})$, 
then by Proposition \ref{prop:destoi-1} there exists $(-\alpha,k)\in\tilde\Delta_i^-$ such that
\[
x_\alpha(k)v\neq0.
\]
But $\mathrm{wt\,}(x_\alpha(k)v)-\mathrm{wt\,}v=\alpha\in Q^+$, 
a contradiction. Hence
\[
0\neq v\in
S\cap W_\chi(\bar{\mathfrak g}_{i-1}).
\]
Since $\chi_i$ is nonsingular, so is $\chi_{i-1}$. By the induction
hypothesis, $W_\chi(\bar{\mathfrak g}_{i-1})$ is irreducible.
Therefore,
\[
W_\chi(\bar{\mathfrak g}_{i-1})\subseteq S,
\]
and in particular $v_\chi\in S$, as required.

\subsection{Determinant of the Shapovalov form}\label{sec:5.3}
In this subsection, assume that $N\ge1$ and that $\chi_N$ is
nonsingular. Hence the $\bar\fg_N$-module $W_\chi(\bar\fg_N)$ is irreducible by Theorem \ref{th:mainirr}. By Lemma \ref{rad}, the restriction of  $\mathbf B_{\chi}$ to $W_{\chi}(\bar\fg_{N})$ is  nondegenerate. 
Therefore,  there exists a basis
\[
\{v_j:=u_jv_{\chi}\}_{j\in I}
\]
of $W_{\chi}(\bar\fg_N)$ consisting of weight vectors such that
\begin{equation*}
\mathbf B_{\chi}(v_{j_1},v_{j_2})
=\delta_{j_1,j_2},\quad j_1,j_2\in I.
\end{equation*}
Here, $I$ is an index set and 
each $u_j$ 
is a $\C$-linear combination of the elements $X_M$, where $M=\{m_{\alpha,d}\}_{(\alpha,d)\in\tilde\Delta^-}\in\mathcal{M}$ satisfies  $m_{\alpha,d}=0$ for all $(\alpha,d)\in\tilde\Delta_{N+1}^-$.

Note that $\fh_{\bm\Pi}=\fh_{N+1}\oplus\fh_{\bm\Pi^N}$, where \[\fh_{N+1}:=\sum_{d=0}^{N}\sum_{\alpha\in\Pi_{N+1}}\C\alpha^{\vee}(d).\]
 For every  $\mu\in \fh_{N+1}^\ast$,
let $\tilde\mu\in\fh_{\bm\Pi}^*$ be the unique linear functional
such that
\[
\tilde\mu|_{\fh_{N+1}}=\mu
\qquad\text{and}\qquad
\tilde\mu|_{\fh_{\bm\Pi^N}}
=\chi|_{\fh_{\bm\Pi^N}}.
\]
Then  $W_{\tilde\mu}(\bar\fg_{N})$ is an irreducible $\CC_N\bar\fg_N$-module isomorphic to $W_\chi(\bar\fg_N)$, and therefore 
\begin{equation}\label{eq:deltajij2}
\mathbf B_{\tilde\mu}(u_{j_1}v_{\tilde{\mu}},u_{j_2}v_{\tilde{\mu}})
=\delta_{j_1,j_2}.
\end{equation}


Recall from \cite{GW} the notion of polynomial functions on a finite-dimensional complex vector space. %
We extend $\tilde\mu$ uniquely to an algebra homomorphism
$\mathcal U(\CC_N\fh)\to\C$, still denoted by $\tilde\mu$. For every $h\in \mathcal{U}(\CC_N\fh)$, define the polynomial function $p_h:\fh_{N+1}^*\to\C$
by
\begin{align*}
    p_h(\mu)=\tilde\mu(h)\quad \textrm{for all }\mu\in \fh_{N+1}^\ast.
\end{align*}
It follows that 
\[\mathcal{A}(\fh_{N+1}^{\ast}):=\C[p_{\alpha^{\vee}(k)}\mid \alpha\in\Pi_{N+1},~0\leq k\leq N]\]
is precisely the algebra of all polynomial functions on $\fh_{N+1}^\ast$. In what follows, we will view the determinant of the Shapovalov form as a polynomial function on $\fh_{N+1}^\ast$ and express it in terms of $p_{\alpha^{\vee}(k)}$ for $\alpha\in\Pi_{N+1}$ and $0\leq k\leq N$. 

Fix $\gamma \in Q^{+}$ and set 
\[\tilde{I}_\gamma:=\{(M,i)\in \mathcal{M}_{N+1}\times I\mid \mathrm{wt\,}(X_Mu_iv_{\chi})=\chi|_{\fh}-\gamma\}.\]
Fix a total ordering on $\tilde{I}_\gamma$. For every $\mu\in \fh_{N+1}^\ast$, the ordered set
\[
\{\tilde{X}_M^i:=X_Mu_iv_{\tilde\mu}\mid (M,i)\in \tilde{I}_\gamma\},
\]
indexed according to this ordering, forms an ordered basis of
$M_{\CP_{\bm\Pi}}(\tilde{\mu})_{\tilde{\mu}|_{\fh}-\gamma}$. 
Let  $\mathrm{det}_{\gamma}(\mu)$ denote the determinant of $(\mathbf{B}_{\tilde{\mu}})_\gamma$ with respect to this basis. Then we obtain a polynomial function
\[\mathrm{det}_{\gamma}:\ \fh_{N+1}^\ast\to\C,\quad \mu\mapsto \mathrm{det}_{\gamma}(\mu).\]

The following result expresses $\mathrm{det}_\gamma$ in terms of the generators of $\mathcal{A}(\fh_{N+1}^\ast)$.

\begin{prpt}\label{det}
    One has that 
    \begin{align*}
\mathrm{det}_\gamma=k\prod_{\alpha\in\hat\Delta_{N+1}^+}p_{\alpha^{\vee}(N)}^{k_{\alpha}},
    \end{align*}
    where $k\in\C^{\times}$ and $k_{\alpha}\in\BN$.
\end{prpt}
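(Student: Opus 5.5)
The plan is to show that $\mathrm{det}_\gamma$ can vanish only on the union of the hyperplanes $\{p_{\alpha^\vee(N)}=0\}$ for $\alpha\in\hat\Delta_{N+1}^+$, and then to use the Nullstellensatz. Each $p_{\alpha^\vee(N)}$ is a nonzero linear form on $\fh_{N+1}^*$. Indeed, for $\alpha\in\hat\Delta_{N+1}^+$, writing $\alpha^\vee$ as a combination of simple coroots, some coefficient on a root of $\Pi_{N+1}$ is nonzero. The coroots of $\Pi_{\le N}$ contribute the constants $\chi(\beta^\vee(N))$; these are fixed, and they vanish because $\beta^\vee(N)$ for $\beta\in\Pi_j$ with $j\le N$ lies in $\fh_{\bm\Pi}^\perp$, so $\tilde\mu$ is $0$ there. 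Hence each $p_{\alpha^\vee(N)}$ is a nonzero linear, and therefore irreducible, element of $\mathcal A(\fh_{N+1}^*)$. Since $\mathcal A(\fh_{N+1}^*)$ is a UFD, it suffices to prove the following vanishing property: if $\mu\in\fh_{N+1}^*$ and $\mathrm{det}_\gamma(\mu)=0$, then $p_{\alpha^\vee(N)}(\mu)=0$ for some $\alpha\in\hat\Delta_{N+1}^+$. Granting this, every irreducible factor of $\mathrm{det}_\gamma$ vanishes on the zero set of $\prod_\alpha p_{\alpha^\vee(N)}$. Such a factor therefore cuts out one of these hyperplanes and is associate to some $p_{\alpha^\vee(N)}$, which gives the stated form of $\mathrm{det}_\gamma$ up to a constant $k$. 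We must also have $k\neq0$, i.e. $\mathrm{det}_\gamma\not\equiv 0$. This follows by exhibiting one $\mu$ with $\mathrm{det}_\gamma(\mu)\ne0$, namely any $\mu$ for which $\tilde\mu$ is nonsingular, using Theorem \ref{th:mainirr} and Lemma \ref{rad}. Such $\mu$ exists: genericity at level $N+1$ requires only $p_{\alpha^\vee(N)}(\mu)\neq0$, a nonempty Zariski open condition. Condition (J) and genericity at levels $\le N$ concern only $\chi_N$, which is already nonsingular.

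For the vanishing property, suppose $p_{\alpha^\vee(N)}(\mu)\ne0$ for all $\alpha\in\hat\Delta_{N+1}^+$. Then $\tilde\mu$ is generic, since the conditions at levels $i\le N$ are inherited from $\chi$ unchanged. Moreover, $(\tilde\mu)_1=\chi_1$ still satisfies (J), so $\tilde\mu$ is nonsingular. By Theorem \ref{th:mainirr}, $M_{\CP_{\bm\Pi}}(\tilde\mu)$ is irreducible, and by Lemma \ref{rad} the form $(\mathbf B_{\tilde\mu})_\gamma$ is nondegenerate. The vectors $\tilde X^i_M=X_Mu_iv_{\tilde\mu}$ form a basis of the $\gamma$-weight space, by the PBW basis \eqref{eq:basispara} together with the fact that $\{u_iv_{\tilde\mu}\}$ is a basis of $W_{\tilde\mu}(\bar\fg_N)\cong W_\chi(\bar\fg_N)$. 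Hence $\mathrm{det}_\gamma(\mu)\ne0$, and the contrapositive is exactly the vanishing property.

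It remains to check that $\mathrm{det}_\gamma$ is a polynomial function, i.e. lies in $\mathcal A(\fh_{N+1}^*)$. The entries of the Gram matrix are $\tilde\mu(\mathbf F(X_{M}u_i,X_{M'}u_{i'}))$. The $u_i$ are fixed and independent of $\mu$, and \eqref{eq:deltajij2} guarantees that they remain orthonormal for every $\mu$. Each entry is $\tilde\mu$ applied to a fixed element of $\mathcal U(\CC_N\fh)$. This equals a polynomial in the $p_{\alpha^\vee(k)}$ for $\alpha\in\Pi_{N+1}$, plus constants coming from $\chi|_{\fh_{\bm\Pi^N}}$ and zeros on $\fh_{\bm\Pi}^\perp$.

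The main obstacle I expect is the bookkeeping that identifies $\tilde\mu$ on a general element $h(k)$ of $\CC_N\fh$. One must check that $\tilde\mu$ vanishes on the $\fh_{\bm\Pi}^\perp$ component, so that $p_{\alpha^\vee(N)}$ is exactly linear in the $\Pi_{N+1}$ coordinates with nonzero coefficients and has no constant term. Without this, the factors could be affine rather than the homogeneous linear forms appearing in the statement. I would verify this using the description of $\fh_{\bm\Pi}^\perp$ in \eqref{hpi}, noting that $N\ge i$ for every $\beta\in\Pi_i$ with $i\le N$.
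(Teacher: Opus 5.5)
Your proposal is correct and follows essentially the same route as the paper. Off the union of the hyperplanes $p_{\alpha^\vee(N)}=0$ the functional $\tilde\mu$ is nonsingular, so $\mathrm{det}_\gamma(\mu)\neq0$ by Theorem~\ref{th:mainirr} and Lemma~\ref{rad}; the Nullstellensatz and unique factorization in $\mathcal{A}(\fh_{N+1}^\ast)$ then give the stated form, since each $p_{\alpha^\vee(N)}$ is a nonzero linear form without constant term (which you usefully verify, along with $\mathrm{det}_\gamma\not\equiv 0$). One wording slip: the vanishing property gives that the zero set of each irreducible factor $f$ of $\mathrm{det}_\gamma$ lies in $\bigcup_{\alpha}\{p_{\alpha^\vee(N)}=0\}$, not that $f$ vanishes on the zero set of the product; read this way, your conclusion that $f$ is associate to some $p_{\alpha^\vee(N)}$ is right.
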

\begin{proof}
Assume that
\[
p_{\alpha^\vee(N)}(\mu)=\tilde\mu(\alpha^\vee(N))
\neq0
\qquad
\text{for all }\alpha\in\hat\Delta_{N+1}^+.
\]
Since the restriction of $\tilde\mu$ to $\fh_{\bm\Pi^N}$ equals $\chi_N$, which is nonsingular, it follows that $\tilde\mu$ is
nonsingular. Hence Theorem \ref{th:mainirr} and Lemma \ref{rad} give $\mathrm{det}_\gamma(\mu)\neq 0$.
Write $V$ for the zero set of $\mathrm{det}_\gamma$. This yields
\[V\subseteq\{\mu\in \fh_{N+1}^{\ast}\mid  p_{\alpha^\vee(N)}(\mu)=0 \textrm{ for some  }\alpha\in\hat\Delta_{N+1}^+\}\]
and hence
\begin{align}\label{eqzerov}
\prod_{\alpha\in\hat\Delta_{N+1}^+}p_{\alpha^{\vee}(N)}\in I(V):=\{g\in \mathcal{A}(\fh_{N+1}^\ast)\mid g|_{V}=0\}.
\end{align}

 By Hilbert's Nullstellensatz, we have
    \[I(V)=\sqrt{(\mathrm{det}_\gamma)},\]
    where $(\det_\gamma)$ denotes the principal ideal generated by
$\det_\gamma$ and $\sqrt{(\mathrm{det}_\gamma)}$ denotes its radical. Therefore, 
\eqref{eqzerov} implies that 
\[\mathrm{det}_\gamma\quad\textrm{divides}\quad\left(\prod_{\alpha\in\hat\Delta_{N+1}^+}p_{\alpha^{\vee}(N)}\right)^m\]
for some $m\in\Z_{+}$. Note that $\mathcal{A}(\fh_{N+1}^\ast)$ is a unique factorization domain
and  each
$p_{\alpha^\vee(N)}$
is irreducible. Then the assertion follows.

\end{proof}

Next, we  determine the exponents of $p_{\alpha^\vee(N)}$ in $\det_\gamma$. To this end, we introduce  $\BN$-graded algebra structures on  $\mathcal{A}(\fh_{N+1}^{\ast})$ and $\mathcal{U}(\CC_N\fh)$.
Equip  $\mathcal{A}(\fh_{N+1}^{\ast})$ with an $\BN$-graded algebra structure 
\begin{align}\label{gradpoly}
    \mathcal{A}(\fh_{N+1}^{\ast})=\bigoplus_{n\in\BN}\mathcal{A}(\fh_{N+1}^{\ast})_{(n)}
\end{align}
determined by 
\begin{align*}
    p_{\alpha^{\vee}(N)}\in \mathcal{A}(\fh_{N+1}^{\ast})_{(1)}\quad \textrm{and}\quad p_{\alpha^{\vee}(k)}\in \mathcal{A}(\fh_{N+1}^{\ast})_{(0)}
\end{align*}
for all $\alpha\in\Pi_{N+1}, 0\leq k\leq N-1$. Since 
    $\tilde{\mu}(\beta^\vee(N))=\chi(\beta^\vee(N))=0$ for all $\beta\in\overline{\Pi}_N$,
it follows that 
\begin{align}\label{homo}
     p_{\alpha^{\vee}(N)}\in \mathcal{A}(\fh_{N+1}^{\ast})_{(1)}\quad \textrm{for all }\alpha\in\hat\Delta_{N+1}^+.
\end{align}
For $f\in  \mathcal{A}(\fh_{N+1}^{\ast})$, we write $\deg_N f$ for its degree with respect to the grading \eqref{gradpoly}. 

Similarly, equip $\mathcal{U}(\CC_N\fh)$ with the $\BN$-graded algebra structure 
\begin{align}\label{eq:gradh}
    \mathcal{U}(\CC_N\fh)=\bigoplus_{n\in\BN} \mathcal{U}(\CC_N\fh)_{(n)}
\end{align}
determined by
\begin{align*}
    \alpha^{\vee}(N)\in \mathcal{U}(\CC_N\fh)_{(1)},\quad\beta^{\vee}(N) \in \mathcal{U}(\CC_N\fh)_{(0)},\quad\textrm{and}\quad h(k)\in \mathcal{U}(\CC_N\fh)_{(0)}
\end{align*}
for all  $\alpha\in\Pi_{N+1},\beta\in\overline{\Pi}_{N},~h\in\fh$, and $0\leq k\leq N-1$. For
$u\in\mathcal{U}(\CC_N\fh)$, we write $\deg_Nu$ for its degree with respect to this grading. By convention, $\deg_N0=-1$ in both $\mathcal{U}(\CC_N\fh)$ and
$\mathcal{A}(\fh_{N+1}^\ast)$.

The following result is elementary, and we omit the proof.
\begin{lemt}\label{le:tri}
Let $(-\alpha,d)\in\tilde\Delta^-$, $u\in\mathcal{U}(\CC_N\bar\fn_N^-)$ and let $u'\in \mathcal{U}(\CC_N\fg)$. 
\begin{enumerate}
    \item Assume that $\alpha\in \hat\Delta_{N+1}^+$. Then  
    \[\deg_N \mathbf{q}(u'x_{\alpha}(N-d)u)=-1.\]
    \item Assume that $\mathbf{q}(u')\neq 0$. Then 
        \[\deg_N \mathbf{q}(u'\alpha^\vee(d))\leq \deg_N \mathbf{q}(u')+1\]
        and the equality holds if and only if $\alpha\in\hat\Delta_{N+1}^+$ and $d=N$. 
        In addition, we have 
    \[\deg_N \mathbf{q}(\alpha^\vee(d)u')\leq \max\{\deg_N \mathbf{q}([\alpha^\vee(d),u']),~\deg_N \mathbf{q}(u'\alpha^\vee(d))\}.\]
\end{enumerate}\qed
\end{lemt}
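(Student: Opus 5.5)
Part (1) follows from showing that the argument of $\mathbf q$ lies in $\mathcal U(\CC_N\fg)(\CC_N\fn^+)$. Part (2) follows from showing that right multiplication by an element of $\CC_N\fh$ commutes with $\mathbf q$, combined with a leading-term argument in the polynomial ring $\mathcal U(\CC_N\fh)$. Neither step needs more than the PBW decomposition \eqref{dec}. Throughout, $\alpha\in\Delta^+$, since $(-\alpha,d)\in\tilde\Delta^-$.

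For (1), I would move $x_\alpha(N-d)$ to the right through $u$. By linearity, take $u=y_1\cdots y_n$ with each $y_j$ a root vector $x_{\beta_j}(e_j)$, $\beta_j\in\bar\Delta_N^-$. Lemma \ref{commu} then expresses $x_\alpha(N-d)u$ as $u\,x_\alpha(N-d)$ plus terms of the form $y_{j'_1}\cdots y_{j'_{n-s}}\,\mathrm{ad}\,y_{j_s}\cdots\mathrm{ad}\,y_{j_1}(x_\alpha(N-d))$.

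Each iterated commutator is either zero or a root vector $x_{\alpha+\beta_{j_1}+\cdots+\beta_{j_s}}(m)$. Since $\alpha\in\hat\Delta_{N+1}^+$, the root $\alpha$ has a positive coefficient on some simple root in $\Pi_{N+1}$. Adding roots of $\bar\Delta_N$ does not change the coefficients on $\Pi_{N+1}$. So any root obtained this way still lies in $\hat\Delta_{N+1}^+$; in particular it is never $0$ and never negative.

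Hence $x_\alpha(N-d)u\in\mathcal U(\CC_N\fg)(\CC_N\fn^+)$, and therefore so is $u'x_\alpha(N-d)u$. This lies in $\ker\mathbf q$, giving degree $-1$. Vanishing of high $t$-degrees ($x(m)=0$ for $m>N$) only helps.

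For (2), write $h=\alpha^\vee(d)$. I would first show $\mathbf q(u'h)=\mathbf q(u')h$. Decompose $u'$ according to \eqref{dec}. The $\mathcal U(\CC_N\fh)$-component is multiplied by $h$, and the $(\CC_N\fn^-)\mathcal U(\CC_N\fg)$-component stays in that space after right multiplication. For a term $u''x$ with $x\in\CC_N\fn^+$, we have $u''xh=u''hx-u''[h,x]$ with $[h,x]\in\CC_N\fn^+$, so the term stays in $\mathcal U(\CC_N\fg)(\CC_N\fn^+)$.

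Now $\mathcal U(\CC_N\fh)$ is a polynomial ring, and $\deg_N$ is the top degree for a grading by variables. Leading homogeneous parts therefore multiply without cancellation, so $\deg_N(ab)=\deg_N a+\deg_N b$ for nonzero $a,b$.

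It remains to compute $\deg_N h$. Expanding $\alpha^\vee$ in simple coroots, $h$ has degree $1$ exactly when $d=N$ and $\alpha^\vee$ involves some $\beta^\vee$ with $\beta\in\Pi_{N+1}$, that is, when $\alpha\in\hat\Delta_{N+1}^+$. Otherwise $h$ has degree $0$. This gives the inequality together with its equality case.

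Finally, $\alpha^\vee(d)u'=[\alpha^\vee(d),u']+u'\alpha^\vee(d)$. Linearity of $\mathbf q$ and the fact that the degree of a sum is at most the maximum of the degrees give the last inequality.

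The only point requiring care is the root bookkeeping in (1): one must check that iterated brackets with $\bar\Delta_N^-$ root vectors can never reach a negative root or $0$. The $\Pi_{N+1}$-coefficient argument settles this.
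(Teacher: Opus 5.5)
Your proof is correct. The paper omits this proof as elementary, and your argument is the verification it leaves to the reader: the PBW decomposition \eqref{dec}, the fact that brackets with root vectors from $\bar\Delta_N^-$ never change the $\Pi_{N+1}$-coefficients of a root in $\hat\Delta_{N+1}^+$, and additivity of top $\deg_N$-degree in the polynomial ring $\mathcal{U}(\CC_N\fh)$. A small sharpening: \eqref{dec} is a decomposition of $\mathcal{U}(\CC_N\fh)$-bimodules and $\mathcal{U}(\CC_N\fh)$ is commutative, so $\mathbf{q}(\alpha^\vee(d)u')=\alpha^\vee(d)\mathbf{q}(u')=\mathbf{q}(u'\alpha^\vee(d))$; hence $\mathbf{q}([\alpha^\vee(d),u'])=0$ and the final inequality in (2) is in fact an equality.
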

In the rest of this subsection, let
\[P=\{p_{\alpha,d}\}_{(\alpha,d)\in\tilde\Delta^-},\quad R=\{r_{\alpha,d}\}_{(\alpha,d)\in\tilde\Delta^-}\in\mathcal{M}_{N+1},\]
and let $i,j\in I$  such that $(P,i),~(R,j)\in \tilde{I}_\gamma$.  
Define \[P^\ast=\{p^\ast_{\alpha,d}\}_{(\alpha,d)\in\tilde\Delta^-}\in\mathcal{M}_{N+1}\] by letting
\begin{align*}
    p^\ast_{\alpha,d}=p_{\alpha,N-d}.
\end{align*}
By Proposition \ref{det}, the polynomial $\det_\gamma$ is homogeneous with
respect to the grading \eqref{gradpoly}. Hence, $\det_\gamma$ coincides with
its leading homogeneous component. Since the map $P\mapsto P^\ast$ induces a permutation of 
$\tilde I_\gamma$, replacing $P$ by $P^\ast$ in the row index 
changes the determinant only by a sign. Therefore, in order to 
determine $\det_\gamma$ up to a nonzero scalar, it suffices to consider 
the matrix
\begin{align}\label{matrix}
 \left(\mathbf{F}(X_{P^\ast}u_i,X_Ru_j)\right)_{(P,i),(R,j)\in\tilde{I}_\gamma}   
\end{align}
with respect to the grading \eqref{eq:gradh}.

The following result determines the degree of each entry of the matrix \eqref{matrix}.

 \begin{lemt}\label{leql1l2}
One has
\begin{align*}
    \deg_N \mathbf{F}(X_{P^\ast}u_i,X_Ru_j)\leq \mathrm{min}\{|P|,|R|\}.
\end{align*}
\end{lemt}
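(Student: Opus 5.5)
The bound is obtained by reducing everything to a single $\mathbf q$-computation and inducting on $\min\{|P|,|R|\}$. By symmetry of $\mathbf F$ (and since $|P^\ast|=|P|$), it suffices to prove
\[
\deg_N\mathbf F(X_{P^\ast}u_i,X_Ru_j)\le |P|
\]
and then apply the same argument with the roles of the two sides exchanged. So I will induct on $|P|$ and show that each letter of $X_{P^\ast}$ contributes at most one unit of $\deg_N$.

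First I write
\[
\mathbf F(X_{P^\ast}u_i,X_Ru_j)=\mathbf q\bigl(\omega_N(u_i)\,\omega_N(X_{P^\ast})\,X_Ru_j\bigr).
\]
Here $\omega_N(X_{P^\ast})$ is a product of elements $x_{\alpha}(N-d)$ with $\alpha\in\hat\Delta_{N+1}^+$, one for each copy of $(-\alpha,d)$ in $P$. The element $\omega_N(u_i)$ lies in $\mathcal U(\CC_N\bar\fn_N^+)$, and $u_j\in\mathcal U(\CC_N\bar\fn_N^-)$.

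\textbf{Base case $|P|=0$.} We have $\deg_N\mathbf q(\omega_N(u_i)X_Ru_j)\le 0$. If $|R|>0$, write $X_R=x_{-\beta}(d)X'$ with $\beta\in\hat\Delta_{N+1}^+$. Applying $\omega_N$, or equivalently moving the leftmost factor of $X_R$ past $\omega_N(u_i)$, shows the entry vanishes unless the weights match. Weights do not match when $|R|>0$ and $|P|=0$, since the $\hat\Delta_{N+1}$-components of weights must agree. In fact the weight condition $(P,i),(R,j)\in\tilde I_\gamma$ forces $P=0\iff R=0$ at the level of the $\Pi_{N+1}$-coefficients. For $P=R=0$ the entry is $\delta_{ij}$-type, of degree $0$, by \eqref{eq:deltajij2} and because $\tilde\mu$ only sees $\fh_{\bm\Pi^N}$ there.

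\textbf{Inductive step.} Let $y=x_\alpha(N-d)$ be the rightmost factor of $\omega_N(X_{P^\ast})$, and push it to the right through $X_Ru_j$ using Lemma~\ref{commu}. The term where $y$ reaches the far right is killed: $\mathbf q(\cdots y)$ has $y\in\CC_N\fn^+$. The remaining terms are of three kinds.

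1. Terms where $y$ is fully absorbed into a factor $x_{-\beta}(d')$ of $X_R$ with $\alpha=\beta$. These produce $\alpha^\vee(N-d+d')$, which is nonzero only if $N-d+d'\le N$. Lemma~\ref{le:tri}(2) then says that inserting this Cartan element raises $\deg_N$ by at most one. The extra Cartan element can be moved to the right end at no cost in degree by the second inequality of Lemma~\ref{le:tri}(2), where $[\alpha^\vee(e),\cdot]$ only rescales root vectors.

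2. Terms where $y$ combines into another root vector $x_{\alpha-\beta}(\cdot)$, or hits $u_j$. By Lemma~\ref{le:tri}(1), any leftover factor $x_\alpha(N-d)$ with $\alpha\in\hat\Delta_{N+1}^+$ that reaches $u$ gives degree $-1$. Any root vector in $\bar\fn_N$, or any element of $t$-degree exceeding $N$, either vanishes or is $\deg_N$-neutral.

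3. In each surviving term, $|P|$ has dropped by one. If $|R|$ also dropped, the induction hypothesis applies directly. If only the $\bar\fg_N$-part changed, I would rewrite the term in PBW form relative to the basis $\{X_Mu_k\}$ and apply induction, with the number of $\hat\Delta_{N+1}$-letters on the left decreased.

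This gives $\deg_N\le 1+(|P|-1)=|P|$.

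\textbf{Main obstacle.} The hard step is controlling the nested commutators in Lemma~\ref{commu}, where $y$ brackets with several letters of $X_R$ at once. Such iterated brackets could in principle create a Cartan element $\alpha^\vee(N)$ with $\alpha\in\hat\Delta_{N+1}^+$ while also consuming more than one letter of $X_R$, and the bookkeeping must show this never gains more than one unit of degree per letter of $P$.

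To handle this, I would introduce an auxiliary count: the number of letters from $\hat\Delta_{N+1}$, with positive ones counted from $\omega_N(X_{P^\ast})$ and negative ones from $X_R$. I would then show that $\deg_N$ of any $\mathbf q$-value is at most the number of positive such letters. This count is monotone under commutation, because brackets of two $\hat\Delta_{N+1}$-root vectors never land in $\Delta_{N+1}$-degree zero except through a single Cartan element. I would prove this stronger statement by induction on the total length of the word; it yields the lemma immediately.
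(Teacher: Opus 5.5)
Your counting invariant from the last paragraph does prove the lemma after one correction, and it is a genuinely different route from the paper's. The letter-peeling induction that precedes it, however, does not close as written.

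\textbf{The letter-peeling induction.} In item~3 you propose to rewrite terms ``relative to the basis $\{X_Mu_k\}$''. But the $u_k$ were chosen only so that the vectors $u_kv_\chi$ form a basis of $W_\chi(\bar\fg_N)$. They do not span $\mathcal U(\CC_N\bar\fn_N^-)$ in general, and $\deg_N$ concerns elements of $\mathcal U(\CC_N\fh)$ before evaluation at $\tilde\mu$. So once a commutator has turned $u_j$ into a general element of $\mathcal U(\CC_N\bar\fg_N)$ (for instance, a letter of $\CC_N\bar\fn_N^+$ hitting $u_j$), you cannot re-expand it in the $u_k$. Your base case has the same conflation: the correct reason is $\mathbf F(u_i,u_j)\in\mathcal U(\CC_N\bar\fh_N)\subseteq\mathcal U(\CC_N\fh)_{(0)}$, not \eqref{eq:deltajij2}. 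The paper avoids this by proving the bound for arbitrary $u,v\in\mathcal U(\CC_N\bar\fg_N)$. Also, in item~1, $[\alpha^\vee(e),x_\beta(k)]=\beta(\alpha^\vee)x_\beta(k+e)$ shifts the loop degree rather than rescaling; this is harmless for counting.

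\textbf{The correction to the invariant.} The claim that $\deg_N\mathbf q(w)$ is at most the number of $\hat\Delta_{N+1}^+$-letters is false once $w$ contains Cartan letters: take $w=\alpha^\vee(N)$ with $\alpha\in\Pi_{N+1}$. Commuting creates such letters, so the induction must run on
\[
c(w)=\operatorname{Card}\{\text{letters }x_\beta(k)\text{ of }w\text{ with }\beta\in\hat\Delta_{N+1}^+\}+\sum_{\text{Cartan letters }h\text{ of }w}\deg_N h .
\]
Then $c([a,b])\le c(a)+c(b)$ for any two letters $a,b$, for three reasons. First, the $\Pi_{N+1}$-coefficients of a root are all $\ge0$ or all $\le0$, so $\beta+\beta'\in\hat\Delta_{N+1}^+$ forces $\beta$ or $\beta'$ into $\hat\Delta_{N+1}^+$. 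Second, $[x_\beta(k),x_{-\beta}(l)]\in\C\beta^\vee(k+l)$ has positive degree only if $\beta\in\hat\Delta_{N+1}$ and $k+l=N$, in which case one of $x_{\pm\beta}$ already counts. Third, $[h(m),x_\beta(k)]\in\C x_\beta(m+k)$. Now sort any word into the order $\CC_N\fn^-$, $\CC_N\fh$, $\CC_N\fn^+$ by adjacent swaps, inducting on length. A sorted word has zero $\mathbf q$-value unless it is purely Cartan, and there $\deg_N$ is subadditive. This gives $\deg_N\mathbf q(w)\le c(w)$. Applying it to $\omega_N(u_i)\omega_N(X_{P^\ast})X_Ru_j$ gives the bound $|P|$, and applying it to the swapped product (using symmetry of $\mathbf F$) gives $|R|$.

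\textbf{Comparison with the paper.} The paper peels the top letter off $X_{P^\ast}$, expands $[x_{-\alpha}(d),X_R]$ by Lemma~\ref{commu}, and bounds each class of terms using Lemma~\ref{le:tri} and induction on $|P|$. This forces the extension to arbitrary $u,v$ and a separate argument for $g\in\CC_N\bar\fn_N^+$. Your invariant is shorter and more robust: there is no case split, no Lemma~\ref{le:tri}, and nested commutators are absorbed automatically. What the paper's route buys is the explicit term decomposition \eqref{eq:le1}--\eqref{eq:le3}, which is reused in Proposition~\ref{pr:l1=l2} to identify exactly which terms reach degree $m$. Your invariant yields only the upper bound, so for that proposition you would need a finer analysis, for example by combining it with the mirror count of $\hat\Delta_{N+1}^-$-letters.
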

\begin{proof}
We prove the slightly stronger assertion that
\[
\deg_N\mathbf F(X_{P^\ast}u,X_Rv)
\leq \min\{|P|,|R|\}
\]
for arbitrary $u,v\in\mathcal U(\CC_N\bar{\mathfrak g}_N)$.
The desired statement follows by taking $u=u_i$ and $v=u_j$.
Since the bilinear form $\mathbf{F}$ is symmetric and $|P^\ast|=|P|$, we may assume that $|P|\leq |R|$. The proof proceeds by induction on $|P|$.  If $|P|=0$, then 
\[\mathbf{F}(X_{P^\ast}u,X_Rv)=\mathbf{F}(u,X_Rv)=0\]
unless $|R|=0$, in which case $\deg_N  \mathbf{F}(u,v)\leq 0$. Then the assertion follows.
Now assume that $|P|>0$. Let $x_\alpha(d)$ be the maximal element, with respect to the total order $``\geq "$, among the elements of $\mathcal B$  occurring in $X_{P^\ast}$ with nonzero
multiplicity. Then
\[
X_{P^\ast}
=
x_\alpha(d)X_{(P^\ast)^{(\alpha,d)}}.
\]
By the definition of $\mathbf F$ (see \eqref{deff}), we have
\[
\mathbf F(X_{P^\ast}u,X_Rv)
= \mathbf q\bigl(
\omega(X_{(P^\ast)^{(\alpha,d)}}u)
[x_{-\alpha}(d),X_R]v
\bigr)+
\mathbf q\bigl(
\omega(X_{(P^\ast)^{(\alpha,d)}}u)
X_Rx_{-\alpha}(d)v
\bigr).
\]
By Lemma \ref{le:tri}(1),the second term has degree $-1$. Therefore, it suffices to estimate the first term.

Fix a basis $\mathcal G$ of $\CC_N\fg$ consisting of root vectors and
Cartan elements. By Lemma \ref{commu}, we have
\begin{align*}
[x_{-\alpha}(d),X_R]
={}&
\sum_{\substack{0\leq s\leq N-d\\ r_{\alpha,s}\geq1}}
r_{\alpha,s}X_{R^{(\alpha,s)}}(-\alpha)^\vee(d+s) +
\sum_{\substack{g\in\mathcal G,\ R'\in\mathcal M_{N+1}\\
|R'|\leq |R|-1}}
c_{R',g}X_{R'}g,
\end{align*}
where only finitely many $c_{R',g}\in\C$ are nonzero, and $ c_{R',g}=0 $
whenever $|R'|=|R|-1$ and $g\in\CC_N\fh$.

Applying the induction hypothesis and Lemma \ref{le:tri}, we obtain
\begin{align}
\label{eq:le1}
&\deg_N\mathbf F
\bigl(
X_{(P^\ast)^{(\alpha,d)}}u,
X_{R^{(\alpha,s)}}(-\alpha)^\vee(d+s)v
\bigr)
\leq |P|,\\
\label{eq:le2}
&\deg_N\mathbf F
\bigl(
X_{(P^\ast)^{(\alpha,d)}}u,
X_{R'}gv
\bigr)
\leq
\min\{|P|-1,|R'|+1\},
\end{align}
whenever $g\in\CC_N\fh\cup\CC_N\fn^-$, or
$g=x_\beta(s)$ with
$(-\beta,s)\in\tilde\Delta_{N+1}^-$.

It remains to consider the case
$g\in\CC_N\bar\fn_N^+$.
Note that both
$X_{(P^\ast)^{(\alpha,d)}}$ and $X_{R'}$
are monomials in root vectors $x_{\alpha'}(d')$ with
$\alpha'\in\hat\Delta_{N+1}^-$ and $0\leq d'\leq N$.
For $\alpha'\in\hat\Delta_{N+1}^-$ and
$\beta\in\bar\Delta_N^+$, either
$\pm\alpha'+\beta$ is not a root or
$\pm\alpha'+\beta\in\hat\Delta_{N+1}^{\mp}$.
It follows that
\[
[\omega(X_{(P^\ast)^{(\alpha,d)}})X_{R'},g]
\]
is a $\C$-linear combination of elements
$\omega(X_{P'})X_{R''}$ with
\[
|P'|\leq |P|-1,
\qquad
|R''|\leq |R'|.
\]
By the induction hypothesis,
\begin{align*}
&\deg_N\mathbf F(X_{P'}u,X_{R''}gv)
\leq |P'|
\leq |P|-1,\\
&\deg_N\mathbf F
\bigl(
X_{(P^\ast)^{(\alpha,d)}}\omega(g)u,
X_{R'}v
\bigr)
\leq |P|-1.
\end{align*}
Hence
\begin{align}
\deg_N\mathbf F
\bigl(
X_{(P^\ast)^{(\alpha,d)}}u,
X_{R'}gv
\bigr)
\leq |P|-1.
\label{eq:le3}
\end{align}
This completes the proof.
\end{proof}

For every $u\in\mathcal{U}(\CC_N\fh)$  and every $n\in\BN$, write $u_{(n)}$ for the projection of $u$ onto $\mathcal{U}(\CC_N\fh)_{(n)}$.
 The following result characterizes the leading terms of the diagonal entries in the matrix \eqref{matrix}.
\begin{prpt}\label{pr:l1=l2}
 Assume that $|P|=|R|=m\in\BN$. Then 
\begin{align*}
    \deg_N \mathbf{F}(X_{P^\ast}u_i,X_Ru_j)=m\quad \textrm{if and only if}\quad P=R,~\mathbf{F}(u_i,u_j)\neq 0,
\end{align*}
in which case the leading term  
\begin{align}\label{leadh}
(\mathbf{F}(X_{P^\ast}u_i,X_Ru_j))_{(m)}=\mathbf{F}(u_i,u_j)\prod_{\alpha\in\hat\Delta_{N+1}^-}\prod_{d=0}^{N}p_{\alpha,d}!(-\alpha)^{\vee}(N)^{p_{\alpha,d}}.
\end{align}
\end{prpt}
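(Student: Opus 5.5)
We argue by induction on $m$, refining the recursion used in the proof of Lemma \ref{leql1l2} so that it also keeps track of the degree-$m$ component. For $m=0$ we have $P=R=0$, and the claim says $\deg_N\mathbf F(u_i,u_j)=0$ exactly when $\mathbf F(u_i,u_j)\neq0$. This holds because $u_i,u_j\in\mathcal U(\CC_N\bar\fn_N^-)$. Indeed, $\mathbf F(u_i,u_j)$ is obtained by commuting elements of $\CC_N\bar\fg_N$, so it lies in $\mathcal U(\CC_N\bar\fh_N)$, which sits in degree $0$; nonzero elements there have degree exactly $0$.

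For $m>0$, let $x_\alpha(d)$ be the maximal element of $\mathcal B$ occurring in $X_{P^\ast}$, so that $X_{P^\ast}=x_\alpha(d)X_{(P^\ast)^{(\alpha,d)}}$. As in the proof of Lemma \ref{leql1l2}, the term $\mathbf q(\cdots X_Rx_{-\alpha}(d)u_j)$ has degree $-1$ by Lemma \ref{le:tri}(1). Bounds \eqref{eq:le2} and \eqref{eq:le3} show that every term of $[x_{-\alpha}(d),X_R]$ except those of the form $r_{\alpha,s}X_{R^{(\alpha,s)}}(-\alpha)^\vee(d+s)$ contributes degree at most $m-1$. So the degree-$m$ part equals
\[
\sum_{s:\ r_{\alpha,s}\ge1} r_{\alpha,s}\,\mathbf F\bigl(X_{(P^\ast)^{(\alpha,d)}}u_i,\,X_{R^{(\alpha,s)}}(-\alpha)^\vee(d+s)u_j\bigr)_{(m)} .
\]
Next we move $(-\alpha)^\vee(d+s)$ to the right past $u_j$ and use Lemma \ref{le:tri}(2). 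The commutator with $u_j$ produces elements of $\mathcal U(\CC_N\bar\fn_N^-)$ again, and by Lemma \ref{leql1l2} these do not raise the degree beyond $m-1$. Multiplying by $(-\alpha)^\vee(d+s)$ raises the degree by one only when $d+s=N$, that is, $s=N-d$. Since $-\alpha\in\hat\Delta_{N+1}^+$, this element acts through $\chi$ as a factor of degree $1$ (compare \eqref{homo}). By the induction hypothesis applied to the pair $((P^\ast)^{(\alpha,d)})^\ast=P^{(\alpha,N-d)}$ and $R^{(\alpha,N-d)}$, the degree-$m-1$ part survives if and only if $P^{(\alpha,N-d)}=R^{(\alpha,N-d)}$ and $\mathbf F(u_i,u_j)\neq0$. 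Moreover, the surviving part then equals the leading term in \eqref{leadh} for the smaller multi-index, multiplied by $r_{\alpha,N-d}(-\alpha)^\vee(N)$.

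Two points remain. The first is the case $r_{\alpha,N-d}=0$. Then no term has degree $m$, and $P\neq R$ because $p_{\alpha,N-d}=p^\ast_{\alpha,d}\ge1$; this gives the "only if" direction. The second is that whenever the term survives, $P^{(\alpha,N-d)}=R^{(\alpha,N-d)}$ forces $P=R$. In that case $r_{\alpha,N-d}=p_{\alpha,N-d}$, so repeated peeling of $x_\alpha(d)$ accumulates the factor $p_{\alpha,N-d}!$ together with $(-\alpha)^\vee(N)^{p_{\alpha,N-d}}$. Since $u_i,u_j$ carry degree $0$, the order of factors in $\mathcal U(\CC_N\fh)$ is harmless modulo lower-degree terms, and we arrive at \eqref{leadh}.

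The main obstacle is the degree bookkeeping in the step where $(-\alpha)^\vee(d+s)$ is moved past $u_j$. We need that $[(-\alpha)^\vee(N),u_j]$ lands in $\mathcal U(\CC_N\fg)$ terms of $t$-degree at least $N$ in $\bar\fg_N$, and that these do not contribute in degree $m$. We would try to settle this first by using Lemma \ref{le:basiccomm}(1) together with the fact that $\CC_N\bar\fg_{\ge N}$ kills $v_\chi$ inside $W_\chi(\bar\fg_N)$. This should let us treat $\mathbf F(X_{P'}u_i, X_{R'}\,h\,u_j)$ as $\mathbf F(X_{P'}u_i,X_{R'}u_j)\,h$ up to degree $\le m-1$.
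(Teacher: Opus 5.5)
Your proposal is correct and follows the paper's proof essentially step for step: induction on $m$, peeling off the maximal factor $x_\alpha(d)$ of $X_{P^\ast}$, discarding every term other than $r_{\alpha,s}X_{R^{(\alpha,s)}}(-\alpha)^\vee(d+s)$ via \eqref{eq:le1}--\eqref{eq:le3}, isolating $s=N-d$ with Lemma~\ref{le:tri}(2), and applying the induction hypothesis to $(P^\ast)^{(\alpha,d)}=(P^{(\alpha,N-d)})^\ast$ and $R^{(\alpha,N-d)}$. The ``main obstacle'' you raise at the end is already settled by your own earlier remark: $[(-\alpha)^\vee(d+s),u_j]\in\mathcal U(\CC_N\bar\fn_N^-)$, and the strengthened form of Lemma~\ref{leql1l2} (proved there for arbitrary $u,v\in\mathcal U(\CC_N\bar\fg_N)$) bounds that term by $m-1$, so the detour through Lemma~\ref{le:basiccomm}(1) and $v_\chi$ is unnecessary (that detour concerns vectors in the module rather than the $\mathcal U(\CC_N\fh)$-valued form).
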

\begin{proof}
We shall prove the proposition by induction on $m$. There is nothing to prove when $m=0$. Now assume that $m>0$. For convenience, we keep the notation from the proof of Lemma~\ref{leql1l2}. 

Combining \eqref{eq:le1}-\eqref{eq:le3}, we obtain that $ \deg_N \mathbf{F}(X_{P^\ast}u_i,X_Ru_j)=m$ if and only if 
\begin{align}\label{eq:pqm}
    \deg_N \mathbf{F}(X_{(P^\ast)^{(\alpha,d)}}u_i,\sum_{\substack{0\leq s\leq N-d,\\ r_{\alpha,s}\geq 1}}r_{\alpha,s}X_{R^{(\alpha,s)}}(-\alpha)^\vee(d+s)u_j)=m.
\end{align}
By Lemma  \ref{le:tri} (2), this is equivalent to 
\begin{align}\label{equ1}
 r_{\alpha,N-d}\geq 1\quad\textrm{and}\quad    \deg_N \mathbf{F}(X_{(P^\ast)^{(\alpha,d)}}u_i,X_{R^{(\alpha,N-d)}}u_j)=m-1.
\end{align}
Observe that $(P^\ast)^{(\alpha,d)}=(P^{(\alpha,N-d)})^\ast$. It follows from the induction hypothesis that 
\begin{align*}
    \deg_N \mathbf{F}(X_{(P^\ast)^{(\alpha,d)}}u_i,X_{R^{(\alpha,N-d)}}u_j)=m-1 \iff P^{(\alpha,N-d)}= R^{(\alpha,N-d)}, \mathbf{F}(u_i,u_j)\neq 0.
\end{align*}
Then the first assertion follows from \eqref{equ1}.

Assume that $P=R$ and $\mathbf{F}(u_i,u_j)\neq 0$. It follows from \eqref{eq:pqm} that 
\begin{align*}
    (\mathbf{F}(X_{P^\ast}u_i,X_Ru_j))_{(m)}=r_{\alpha,N-d}(-\alpha)^\vee(N)(\mathbf{F}(X_{(P^\ast)^{(\alpha,d)}}u_i,X_{R^{(\alpha,N-d)}}u_j))_{(m-1)},
\end{align*}
while the induction hypothesis yields that 
\begin{align*}
&(\mathbf{F}(X_{(P^\ast)^{(\alpha,d)}}u_i,X_{R^{(\alpha,N-d)}}u_j))_{(m-1)}\\
=\ &\mathbf{F}(u_i,u_j)\prod_{\beta\in\hat\Delta_{N+1}^-}\prod_{k=0}^{N}(p_{\beta,k}-\delta_{\beta,\alpha}\delta_{k,N-d})!(-\beta)^{\vee}(N)^{p_{\beta,k}-\delta_{\beta,\alpha}\delta_{k,N-d}}.
\end{align*}
Then the second assertion follows immediately since $p_{\alpha,N-d}=r_{\alpha,N-d}$.
\end{proof}

The following is the main result of this subsection.
\begin{thmt}\label{th:det}
    Let $\gamma\in Q^+$. Then we have
    \begin{align}\label{eq:det}
        \mathrm{det}_\gamma= k\prod_{(R,i)\in \tilde{I}_\gamma}\left(\prod_{\alpha\in\hat{\Delta}_{N+1}^-}p_{(-\alpha)^{\vee}(N)}^{\sum_{d=0}^N r_{\alpha,d}}\right),
    \end{align}
    where $k$ is a nonzero complex number.
\end{thmt}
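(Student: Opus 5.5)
The plan is to combine Proposition \ref{det}, which pins down $\det_\gamma$ up to the exponents, with a leading-term computation of the matrix \eqref{matrix} using Lemma \ref{leql1l2} and Proposition \ref{pr:l1=l2}. By Proposition \ref{det}, $\det_\gamma=k\prod_{\alpha\in\hat\Delta_{N+1}^+}p_{\alpha^\vee(N)}^{k_\alpha}$ is a monomial in the degree-one elements $p_{\alpha^\vee(N)}$ (see \eqref{homo}), so it is homogeneous for the grading \eqref{gradpoly}. Therefore it suffices to compute the top homogeneous component of the determinant of \eqref{matrix} after applying $\tilde\mu$, and to check that this component is nonzero and equals the product in \eqref{eq:det}. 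Since $\det_\gamma$ equals its own top component, the top component we compute must be $\det_\gamma$ itself up to the sign coming from $P\mapsto P^\ast$.

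First I would order $\tilde I_\gamma$ so that $|R|$ is nondecreasing and compare degrees in the Leibniz expansion. For a permutation $\sigma$ of $\tilde I_\gamma$, Lemma \ref{leql1l2} bounds the degree of $\prod_{(P,i)}\mathbf F(X_{P^\ast}u_i,X_{\sigma(P)}u_{\sigma(i)})$ by $\sum\min\{|P|,|\sigma(P)|\}$. This is at most $\sum_{(P,i)}|P|$, with equality only if $|\sigma(P)|=|P|$ for every index. Hence the top-degree part, of degree $D:=\sum_{(R,i)\in\tilde I_\gamma}|R|$, is the determinant of the block-diagonal matrix whose blocks are indexed by $m=|P|=|R|$ and whose entries are the degree-$m$ components. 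By Proposition \ref{pr:l1=l2}, within a block the $(m)$-component is nonzero only when $P=R$. In that case it equals $\mathbf F(u_i,u_j)\prod_{\alpha,d}p_{\alpha,d}!\,(-\alpha)^\vee(N)^{p_{\alpha,d}}$. So each block further decomposes into blocks indexed by $R$, and each such block is the scalar $c_R\prod_{\alpha,d}(-\alpha)^\vee(N)^{r_{\alpha,d}}$ times the Gram matrix $(\mathbf F(u_i,u_j))_{i,j}$ restricted to those $i$ with $(R,i)\in\tilde I_\gamma$.

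Next I would evaluate at $\tilde\mu$. By \eqref{eq:deltajij2}, $\tilde\mu(\mathbf F(u_i,u_j))=\delta_{ij}$, so each Gram block becomes an identity matrix. Here some care is needed: $\mathbf F(u_i,u_j)\in\mathcal U(\CC_N\fh)$ is of degree $0$, and its value under $\tilde\mu$ does not depend on $\mu$. The reason is that $u_i,u_j$ lie in $\mathcal U(\CC_N\bar\fn_N^-)$, and the $\mathbf q$-projection only involves Cartan elements of $\bar\fh_N$-type together with $\alpha^\vee(k)$, $k<N$, paired with $\chi_N$-data. This needs checking, since components $h(k)$ with $h$ touching $\Pi_{N+1}$ and $k<N$ could appear. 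If they do, they still have degree $0$, and \eqref{eq:deltajij2} already asserts the value is $\delta_{ij}$ for every $\mu$, so it does no harm. The top component then evaluates to a nonzero constant times $\prod_{(R,i)}\prod_{\alpha\in\hat\Delta^-_{N+1}}p_{(-\alpha)^\vee(N)}^{\sum_d r_{\alpha,d}}$, which is exactly \eqref{eq:det}.

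The main obstacle is justifying that taking top components commutes with evaluating at $\tilde\mu$. Evaluation sends $\mathcal U(\CC_N\fh)_{(n)}$ into $\mathcal A(\fh_{N+1}^\ast)_{(n)}$, because $\beta^\vee(N)$ with $\beta\in\overline\Pi_N$ evaluates to $0$, by the remark preceding \eqref{homo}. The point is that the computed degree-$D$ part is a nonzero polynomial, namely a product of nonzero linear forms, so it really is the top component of $\det_\gamma$ and no cancellation can occur. Combined with the homogeneity of $\det_\gamma$ from Proposition \ref{det}, this yields \eqref{eq:det}, with $k$ being the product of the factorials and the sign.
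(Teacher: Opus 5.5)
Your proposal is correct and follows essentially the paper's route. You combine the homogeneity of $\det_\gamma$ from Proposition~\ref{det} with the degree bound of Lemma~\ref{leql1l2} and the diagonal leading terms of Proposition~\ref{pr:l1=l2}, evaluated via \eqref{eq:deltajij2}, to identify the top component with the right-hand side of \eqref{eq:det}. Your Leibniz-expansion and block-diagonal argument simply spells out the step that the paper states tersely as ``the leading term of $\det_\gamma$ coincides with the right-hand side.''
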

\begin{proof}
For $(P,i),(R,j)\in \tilde{I}_\gamma$, let
$f_{P,R}^{i,j}$ be the polynomial on
$\fh_{N+1}^\ast$ defined by
\[
f_{P,R}^{i,j}(\mu)
=
\mathbf B_{\tilde\mu}
(X_{P^\ast}u_{i}v_{\tilde\mu},
X_{R}u_{j}v_{\tilde\mu})\quad \textrm{for all $\mu\in\fh_{N+1}^\ast$}.
\]
By definition,
\[
f_{P,R}^{i,j}(\mu)
=
\tilde\mu\!\left(
\mathbf F\bigl(
X_{P^\ast}u_i,
X_{R}
u_{j}
\bigr)
\right).
\]
In particular, we have $\tilde{\mu}(\mathbf{F}(u_i,u_j))= \mathbf B_{\tilde\mu}(u_i v_{\tilde{\mu}},u_jv_{\tilde{\mu}})=\delta_{i,j}$.
It follows from Lemma \ref{leql1l2} and Proposition \ref{pr:l1=l2} that 
\begin{itemize}
    \item  $\deg_N f_{P,R}^{i,j}\leq \textrm{min}\{|P|,|R|\}$, and 
    \item when $|P|=|R|=m\in \BN$, $\deg_N f_{P,R}^{i,j}=m$ if and only if $(P,i)=(R,j)$.
\end{itemize}
Furthermore, \eqref{leadh} implies that the leading term of $f_{R,R}^{i,i}$ (with respect to the $\BN$-grading \eqref{gradpoly}) is 
\[k_{R,i}\prod_{\alpha\in\hat{\Delta}_{N+1}^-}p_{(-\alpha)^{\vee}(N)}^{\sum_{d=0}^N r_{\alpha,d}},\]
where $k_{R,i}\in\C^{\times}$. Therefore,  the leading term of $\det_\gamma$ coincides with the right-hand side of \eqref{eq:det}.

On the other hand, since each $p_{\alpha^\vee(N)}\in\mathcal{A}(\fh_{N+1}^\ast)_{(1)}$, it follows from Proposition \ref{det}  that
 $\det_\gamma$ is a homogeneous polynomial with respect to the $\BN$-grading of $\mathcal{A}(\fh_{N+1}^\ast)$ (see \eqref{gradpoly}).
Hence $\det_\gamma$ coincides with its leading term, which is precisely the polynomial in
\eqref{eq:det}.
\end{proof}

\subsection{Proof of Theorem \ref{thm:main}} For $N=0$, Theorem \ref{thm:main} reduces to Jantzen's criterion. Hence we assume $N\ge1$. Theorem \ref{thm:main} follows from Theorem  \ref{th:mainirr} and the following result.
\begin{prpt}
    Assume that $\chi$ is not nonsingular. Then $M_{\mathcal{P}_{\bm\Pi}}(\chi)$ has a proper submodule.
\end{prpt}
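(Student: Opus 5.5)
The plan is to find the smallest level at which nonsingularity fails and produce a nonzero vector in $\operatorname{Rad}\mathbf B_\chi$ there, using Lemma \ref{rad}. Nonsingularity of $\chi$ is the same as nonsingularity of $\chi_{N+1}=\chi$. The conditions defining nonsingularity of $\chi_i$ (condition (J) for $\chi_1$, and $\chi(\alpha^\vee(k-1))\neq0$ for $\alpha\in\hat\Delta_k^+$, $2\le k\le i$) are inherited by $\chi_{i'}$ for $i'\le i$. So if $\chi$ is not nonsingular, there is a smallest $i\in\{1,\dots,N+1\}$ with $\chi_i$ not nonsingular, and then $\chi_{i-1}$ is nonsingular when $i\ge2$.

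\emph{Reduction to $W_\chi(\bar\fg_i)$.} First observe that for $\gamma\in Q^+\cap\operatorname{Span}_{\BN}\overline\Pi_i$ we have
\[
M_{\CP_{\bm\Pi}}(\chi)_{\chi|_\fh-\gamma}=W_\chi(\bar\fg_i)_{\chi|_\fh-\gamma}.
\]
The reason is that in the PBW basis \eqref{eq:basispara}, any $x_\beta(j)$ with $\beta\in\hat\Delta_k^-$, $k>i$, contributes a weight with a nonzero coefficient on some simple root in $\Pi_k$. Such a coefficient can never cancel, since all the contributions are negative. Hence only monomials in $\CC_N\bar\fg_i$-root vectors survive in these weights. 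Consequently, a nonzero vector in the radical of the restricted form $(\mathbf B_\chi)_\gamma|_{W_\chi(\bar\fg_i)}$ lies in $\operatorname{Rad}((\mathbf B_\chi)_\gamma)$, and by Lemma \ref{rad} $M_{\CP_{\bm\Pi}}(\chi)$ is then reducible. By Proposition \ref{giiso}, $W_\chi(\bar\fg_i)\cong\pi_{N,i-1}^{-1}(M_{\bm\Pi^i}(\chi_i))$, and the Shapovalov forms correspond. It therefore suffices to show that the $\CC_{i-1}\bar\fg_i$-module $M_{\bm\Pi^i}(\chi_i)$ has a degenerate Shapovalov form in some weight $\chi|_\fh-\gamma$ with $\gamma\in\operatorname{Span}_{\BN}\overline\Pi_i$.

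\emph{Case $i=1$.} Here $\chi_1$ fails (J), so $M_{\bm\Pi^1}(\chi_1)$ is reducible by Proposition \ref{pr:0}. Its maximal submodule equals the radical of its form, which is a nonzero sum of weight spaces of weights $\chi|_\fh-\gamma$ with $\gamma\in\operatorname{Span}_{\BN}\overline\Pi_1$. This finishes the case by the reduction above.

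\emph{Case $i\ge2$.} Now $\chi_{i-1}$ is nonsingular and $\chi(\alpha^\vee(i-1))=0$ for some $\alpha\in\hat\Delta_i^+$. Apply Theorem \ref{th:det} to the Takiff algebra $\CC_{i-1}\bar\fg_i$ with the genuine $(i+1)$-partition $\bm\Pi^i$, so that $N$ is replaced by $i-1\ge1$; its hypothesis that $\chi_{i-1}$ is nonsingular holds. Take $\gamma=\alpha$. The index set $\tilde I_\alpha$ contains $(R,j_0)$, where $R$ records the single element $x_{-\alpha}(0)$ and $u_{j_0}$ is the normalized basis vector of weight $\chi|_\fh$ (a multiple of $v_\chi$). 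Hence the exponent of $p_{\alpha^\vee(i-1)}$ in $\det_\alpha$ is positive. Evaluating at $\mu=\chi|_{\fh_{i}}$, the level-$i$ component of $\chi_i$ (so that $\tilde\mu=\chi_i$), gives $p_{\alpha^\vee(i-1)}(\mu)=\chi(\alpha^\vee(i-1))=0$, so $\det_\alpha(\mu)=0$. Thus the form on the $(\chi|_\fh-\alpha)$-weight space is degenerate, and we conclude as above.

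The main point needing care is the weight-space identification in the reduction step, together with checking that Theorem \ref{th:det} is legitimately applied to the truncated pair $(\bar\fg_i,\bm\Pi^i)$ with $N$ replaced by $i-1$. One must also confirm that $\mathbf B_\chi$ restricted to $W_\chi(\bar\fg_i)$ agrees with the Shapovalov form of $M_{\bm\Pi^i}(\chi_i)$ under the isomorphism of Proposition \ref{giiso}. I expect this last compatibility to be the main obstacle, though it should follow because both forms are computed by the same projection $\mathbf q$ restricted to $\mathcal U(\CC_N\bar\fg_i)$.
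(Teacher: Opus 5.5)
Your proof is correct. For $i\ge2$ it agrees with the paper's: you take the minimal failing level $i$, apply Theorem~\ref{th:det} to $(\CC_{i-1}\bar\fg_i,\bm\Pi^i)$ with $N$ replaced by $i-1$, and conclude $\det_\alpha(\chi_i)=0$. Your observation that the index $(R,j_0)$, with $R$ recording the single element $x_{-\alpha}(0)$, forces a positive exponent of $p_{\alpha^\vee(i-1)}$ fills in a step the paper leaves implicit.

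Where you differ is in transferring the degeneracy from level $i$ to $M_{\CP_{\bm\Pi}}(\chi)$. The paper takes a maximal-weight vector $w$ in a proper submodule of $W_\chi(\bar\fg_i)$. It has $x_\beta(k)w=0$ for $\beta\in\overline\Pi_i$ by maximality, and for $\beta\in\Pi_j$ with $j>i$ because such $x_\beta(k)$ commute with $\mathcal U(\CC_N\bar\fn_i^-)$. So $w$ is a $\CC_N\fn^+$-singular vector and generates a proper submodule.

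You instead note that the weight spaces of $M_{\CP_{\bm\Pi}}(\chi)$ at weights $\chi|_\fh-\gamma$ with $\gamma\in\operatorname{Span}_{\BN}\overline\Pi_i$ lie entirely in $W_\chi(\bar\fg_i)$. Hence degeneracy of the restricted form there is degeneracy of $\mathbf B_\chi$ itself, and Lemma~\ref{rad} finishes. This treats $i=1$ and $i\ge2$ uniformly and stays within the Shapovalov-form framework, but it needs the forms to match under Proposition~\ref{giiso}.

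That compatibility, which you flag as the main obstacle, is routine. The PBW projection $\mathbf q$ restricts on $\mathcal U(\CC_N\bar\fg_i)$ to the corresponding projection for $\CC_N\bar\fg_i$. Moreover, $\chi$ vanishes on $\beta^\vee(j)$ for $\beta\in\Pi_k$, $j\ge k$, which is exactly what pulling back along $\pi_{N,i-1}$ produces. Alternatively, a contravariant form on a cyclic highest weight module is determined by the norm of its generator.

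The paper's route avoids any comparison of forms, since only module-level reducibility passes through the pullback. It also produces explicit singular vectors, the same kind of generators used for the maximal submodules in Section~\ref{sec:8}.
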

\begin{proof}
We begin with the case where $\chi_1$ does not satisfy condition (J). 
By  Lemma \ref{pullback}, Proposition \ref{giiso} and Proposition \ref{pr:0}, the $\CC_N\bar{\mathfrak{g}}_1$-module $W_\chi(\bar{\mathfrak{g}}_1)=\mathcal{U}(\CC_N\bar{\mathfrak{g}}_1).v_{\chi}$ is reducible. Choose a nonzero proper submodule $S$ and a nonzero weight vector
\[
w=uv_\chi \in S, \qquad u\in\mathcal U(\CC_N\bar{\mathfrak n}_1^-),
\]
whose weight is maximal among the weights of $S$.
That is, $\mathrm{wt\,}w'-\mathrm{wt\,}w\notin Q^+$ for all  nonzero weight vectors $w'\in S$ with $\mathrm{wt\,}w'\neq \mathrm{wt\,}w$. Then 
\begin{align}\label{sinpi1}
x_\alpha(k)w=0
\qquad
\text{for all }\alpha\in\Pi_0\cup\Pi_1,\quad
0\leq k\leq N.
\end{align} 
For $\alpha\in  \cup_{j=2}^{N+1}\Pi_j$ and  $\beta\in\bar\Delta_1^+$, we have 
 $\beta-\alpha\notin \Delta\cup\{0\}$. It follows that 
\[[x_{\alpha}(k),u]=0\]
and hence 
\begin{align}\label{eq:singu}
     x_{\alpha}(k).w=0\quad \textrm{for }\alpha\in  \cup_{j=2}^{N+1}\Pi_j,~0\leq k\leq N.
\end{align}
Combining \eqref{sinpi1} and \eqref{eq:singu},
we see that $w$ is annihilated by all generators of
$\CC_N\fn^+$.
Hence, the $\CC_N\fg$-submodule generated by $w$
is a proper submodule of
$M_{\mathcal P_{\bm\Pi}}(\chi)$ by the Poincaré-Birkhoff-Witt theorem.

Now assume that $\chi_1$ satisfies the condition (J). Since $\chi$ is not nonsingular, choose $i\in\{2,3,\dots,N+1\}$  minimal  such that 
\begin{align*}
    \chi(\beta^{\vee}(j-1))\neq 0\quad \textrm{for all }\beta\in\hat\Delta_j^{+},~j=2,\dots,i-1,
\end{align*}
and
\begin{align}\label{sinalpha}
      \chi(\alpha^{\vee}(i-1))=0\quad \textrm{for some }\alpha\in\hat\Delta_i^{+}.
\end{align}
Thus, $\chi_{i-1}\in \fh_{\bm\Pi^{i-1}}^\ast$ is nonsingular, whereas $\chi_i\in\fh_{\bm\Pi^{i}}^\ast$ is not nonsingular.

 Consider the parabolically induced
$\CC_{i-1}\bar{\mathfrak g}_i$-module
$M_{\bm\Pi^i}(\chi_i)$. The corresponding Shapovalov form is defined as in Subsection \ref{shap}, with  
$(N,\bm\Pi)$ replaced by $(i-1,\bm\Pi^i)$.
It follows from Theorem~\ref{th:det} and \eqref{sinalpha} that
\[
{\det}_{\alpha}(\chi_i)=0.
\]
This implies that  the $\CC_{i-1}\bar{\mathfrak g}_i$-module
$M_{\bm\Pi^i}(\chi_i)$ is reducible by Lemma \ref{rad}, and hence the $\CC_N\bar\fg_{i}$-module $W_\chi(\bar\fg_{i})$ is reducible as well by Lemma \ref{pullback} and Proposition \ref{giiso}.
Therefore, there exists a nonzero singular vector $w'\in W_\chi(\bar\fg_{i})\backslash \C v_\chi$ such that $x_{\alpha}(k).w'=0$ for all $\alpha\in \overline{\Pi}_i$ and $0\leq k\leq N$. On the other hand,  we obtain that
\[x_{\alpha}(k).w'=0\quad \textrm{for }\alpha\in\cup_{j=i+1}^{N+1}\Pi_j,~0\leq k\leq N\]
by an argument  similar to that for \eqref{eq:singu}.
Consequently, the
$\CC_N\fg$-submodule generated by $w'$
is a proper submodule of
$M_{\mathcal P_{\bm\Pi}}(\chi)$, as desired.
\end{proof}


\section{Irreducible highest weight modules for loop algebras}\label{sec:6}
In this section, we study irreducible highest weight modules for loop  algebras and obtain character formulas for a class of such modules.
\subsection{Highest weight modules for loop algebras}\label{sec:6.1}
Let $\mathcal{L}=\C[t,t^{-1}]$ be the Laurent polynomial ring and let $\varphi: \CL\mathfrak{h}\rightarrow \BC$ be a linear functional on $\CL\h$. 
Extend it to be a character of $\CL\b$,  still denote by $\varphi$, by requiring that $\varphi(\CL\n^+)=0$. Form 
the Verma $\CL\g$-module
\[
M(\varphi)=\mathrm{Ind}_{\CL\b}^{\CL\g} ~\C_\varphi.
\]
Write $V(\varphi)$ for the irreducible quotient of $M(\varphi)$. 

We say that $\varphi$ is polynomial if for every $\alpha\in \Pi$, there is a polynomial $f_\alpha(t)$ such that 
\begin{align}\label{poly}
    \varphi(\alpha^\vee\otimes t^n)=f_\alpha(n)\quad \textrm{ for all $n\in \Z$}.
\end{align}
 More generally, 
we say that $\varphi$ is exp-polynomial if there exist finitely many nonzero complex numbers $a_1, a_2,\dots, a_\nu$ and polynomial characters $\varphi_1,\varphi_2,\dots,\varphi_\nu$ such that 
\be\label{eq:exp-poly}
\varphi(h\otimes t^n)=\sum_{i=1}^\nu a_i^n \varphi_i(h\otimes t^n)\quad \te{for all }h\in \h,~n\in\Z.
\ee

Write $\mathcal{E}$ for the subspace of $(\CL\fh)^\ast$ consisting of all exp-polynomial linear functionals on $\CL\fh$.

The following result is obtained in \cite[Theorem 2.9]{BGLZ}.
\begin{prpt}\label{pr:quasifinite}
The irreducible $\CL\g$-module $V(\varphi)$ has finite-dimensional weight spaces if and only if $\varphi\in\mathcal{E}$.
\end{prpt}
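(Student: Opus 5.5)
The plan is to prove the two implications separately. The ``only if'' direction should reduce to a linear recurrence. The ``if'' direction will show that $V(\varphi)$ factors through a finite-dimensional quotient of $\CL\fg$, namely a direct sum of Takiff algebras, where PBW gives finiteness. Throughout, $v$ denotes the image of the highest weight vector in $V(\varphi)$, and $\lambda:=\varphi|_{\fh}$ is its weight.

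\emph{Only if.} Suppose $\dim V(\varphi)_{\lambda-\alpha}<\infty$ for $\alpha\in\Pi$. The vectors $(x_{-\alpha}\otimes t^n)v$, $n\in\Z$, all lie in this weight space, so there is a nonzero Laurent polynomial $Q(t)=\sum_n c_nt^n$ with $(x_{-\alpha}\otimes Q)v=0$. Applying $x_\alpha\otimes t^m$ and using $\CL\fn^+ v=0$, we get
\[
\sum_n c_n\,\varphi(\alpha^\vee\otimes t^{n+m})=0\quad\text{for all } m\in\Z .
\]
Multiplying $Q$ by a power of $t$, we may assume that $Q$ is an honest polynomial with nonzero constant term. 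Then the two-sided sequence $n\mapsto\varphi(\alpha^\vee\otimes t^n)$ satisfies a linear recurrence whose characteristic polynomial has roots $a_i\neq0$ with multiplicities $m_i$. Because $Q(0)\neq0$, the recurrence can be run in both directions, so the standard theory of linear recurrences gives
\[
\varphi(\alpha^\vee\otimes t^n)=\sum_i a_i^n f_{\alpha,i}(n)
\]
for polynomials $f_{\alpha,i}$. Since the $\alpha^\vee$ with $\alpha\in\Pi$ span $\fh$, collecting the finitely many $a_i$ over all $\alpha\in\Pi$ and defining polynomial characters $\varphi_i$ via the $f_{\alpha,i}$ yields the form \eqref{eq:exp-poly}. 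Hence $\varphi\in\mathcal E$.

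\emph{If.} Let $\varphi\in\mathcal E$. Write $\varphi=\sum_{i=1}^\nu a_i^n\varphi_i$ with the $a_i$ distinct, and let $N_i$ bound the degrees of the polynomials attached to $\varphi_i$. Put $P(t)=\prod_i(t-a_i)^{N_i+1}$. A direct computation, namely the difference operator annihilating $n\mapsto a^n n^k$ for $k\le N_i$, shows that $\varphi(\fh\otimes P\CL)=0$. Let $J:=P\CL$, an ideal of $\CL$. The key step is to show that $\fg\otimes J$ acts by zero on $V(\varphi)$. For this, first show that the subspace $(\fn^-\otimes J)\,\U(\CL\fn^-)v$ of $M(\varphi)$ generates a proper submodule. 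I would argue by induction on the height of the weight that no vector of $\U(\CL\fg)(\fg\otimes J)v$ has nonzero component on the highest weight line. Commutators with $\fg\otimes J$ stay in $\fg\otimes J$, since $J$ is an ideal. The only way to return to $\C v$ is through $\fh\otimes J$, on which $\varphi$ vanishes, while $(\fn^+\otimes J)v=0$. Hence $\U(\CL\fg)(\fg\otimes J)v$ lies in the maximal proper submodule. The annihilator of $V(\varphi)$ is an ideal, and it contains $\fg\otimes J$ acting on the generator $v$, so $\fg\otimes J$ annihilates $V(\varphi)$.

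Consequently $V(\varphi)$ is an irreducible highest weight module over
\[
\CL\fg/(\fg\otimes J)\cong \fg\otimes\CL/J\cong\bigoplus_i \CC_{N_i}\fg ,
\]
where the last isomorphism uses the Chinese remainder theorem together with $t\mapsto a_i+(t-a_i)$. This quotient is finite-dimensional and has the triangular decomposition induced from $\fg$. By PBW, its Verma modules have weight spaces of dimension equal to a Kostant-type partition count in finitely many negative root vectors, which is finite. So $V(\varphi)$, being a quotient of such a Verma module, is Harish-Chandra.

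I expect the main obstacle to be the vanishing of $\fg\otimes J$ on $V(\varphi)$, specifically the bookkeeping in the induction on height. If it becomes messy, I would instead use the Shapovalov-form description: show that $\mathbf F(u,(x\otimes p)u')$ lies in $\U(\CL\fh)\cdot(\fh\otimes J)$ for $p\in J$, so that it is killed by $\varphi$. This places $(\fn^-\otimes J)\U(\CL\fn^-)v$ in the radical of the form.
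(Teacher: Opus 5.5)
Your proof is correct. The paper does not prove this proposition itself; it quotes it from BGLZ (Theorem~2.9), so the useful comparison is with the machinery the paper develops in Section~\ref{sec:6}. There, for polynomial $\varphi$, the paper proves your vanishing statement $\varphi(\fh\otimes P\CL)=0$ with $P=(t-1)^{N+1}$. It then obtains $V(\varphi)\cong\pi_{N,\fg}^{-1}(V_{\CC_N\b}(\bar\varphi))$ from Lemma~\ref{pullback}. Several exponentials $a_i$ are handled not by the Chinese remainder theorem but by the Chari--Pressley factorization $V(\varphi)\cong\bigotimes_i V(\varphi_i)[a_i]$ (Proposition~\ref{tensor}).

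Your choice $P=\prod_i(t-a_i)^{N_i+1}$ treats all $a_i$ at once and avoids the tensor product theorem, so it is the more elementary route for this statement. For finiteness you only need $\dim\CL/J<\infty$; the identification with $\bigoplus_i\CC_{N_i}\fg$ is extra information.

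The step you flagged as the main obstacle can be shortened in the paper's style. Pull back along $\CL\fg\to\fg\otimes(\CL/J)$ the irreducible highest weight module of highest weight $\bar\varphi$. By Lemma~\ref{pullback} this is an irreducible highest weight $\CL\fg$-module of highest weight $\varphi$, hence $V(\varphi)$, and $\fg\otimes J$ acts on it by zero by construction.

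Your height induction is nevertheless sound. In $(x_\beta\otimes p)y_1\cdots y_kv$, only the commutator with $y_1$ survives in weight $\lambda$, and it lies in $\fg_{\beta'}\otimes J$ with $\beta'$ of smaller height. Phrase the final step as follows: $\{w\mid(\fg\otimes J)w=0\}$ is a submodule because $\fg\otimes J$ is an ideal, and it contains the image of $v$.

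In the ``only if'' direction, the normalization $Q(0)\neq0$ is harmless. The relation $\sum_n c_n\varphi(\alpha^\vee\otimes t^{n+m})=0$ holds for all $m\in\Z$, so shifting $Q$ is just reindexing.
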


Let $V$ be an $\CL\g$-module and $a\in \C^*$. We denote by $V[a]$ the $\CL\g$-module whose underlying space is $V$ and the module action is given by 
\[
(x\otimes t^n). v= a^n (x\otimes t^n).v\quad \te{for }x\in\g,~n\in\Z,~\te{and }v\in V.
\]
Let $\varphi$ be an exp-polynomial character on $\CL\h$ as in \eqref{eq:exp-poly}. When $\varphi\neq0$, we may and do assume that $a_1,\dots,a_\nu$ are pairwise distinct and that
$\varphi_1,\dots,\varphi_\nu$ are nonzero. Then the decomposition \eqref{eq:exp-poly} is unique up to a permutation of the indices. In this case, we have the following result (see \cite[Theorem 1.7]{CP}). 

\begin{prpt}\label{tensor}
The tensor product $\CL\g$-module 
\[V(\varphi_1)[a_1]\otimes V(\varphi_2)[a_2]\otimes \cdots \otimes V(\varphi_\nu)[a_\nu]\] 
is irreducible. In particular,
  \[
  V(\varphi)\cong V(\varphi_1)[a_1]\otimes V(\varphi_2)[a_2]\otimes \cdots \otimes V(\varphi_\nu)[a_\nu].
  \]
\end{prpt}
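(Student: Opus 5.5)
The statement is cited from Chari--Pressley \cite[Theorem 1.7]{CP}. If I had to prove it directly, I would reduce to Takiff algebras, using that each polynomial character factors through a truncated current algebra.

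\textbf{Step 1: each factor is a pullback from a Takiff module.} For a polynomial character $\varphi_i$, consider $V(\varphi_i)[a_i]$. Its highest weight vector $v_i$ is annihilated by $\CL\fn^+$, and $h\otimes t^n$ acts on it by $a_i^n f_{i,\alpha}(n)$. Let $N_i$ exceed the degrees of all $f_{i,\alpha}$. Then the annihilator of $V(\varphi_i)[a_i]$ contains $\fg\otimes (t-a_i)^{N_i+1}\CL$. To see this, the span of vectors killed by $\fg\otimes(t-a_i)^{N_i+1}\CL$ is a submodule, because this space is an ideal. That span is nonzero: on $v_i$, the Cartan part of the ideal acts by zero, since applying the finite difference operator $\prod(\text{shift}-a_i)^{N_i+1}$ to $a_i^nf(n)$ kills it. The $\fn^+$ part also kills $v_i$. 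For the $\fn^-$ part, one checks that $x_{-\alpha}\otimes(t-a_i)^{N_i+1}t^m\, v_i$ is a singular vector, hence zero in the irreducible module. By irreducibility the span is everything. Hence $V(\varphi_i)[a_i]$ is the pullback of an irreducible highest weight $\CC_{N_i}\fg$-module under the evaluation map $\CL\to\CL/(t-a_i)^{N_i+1}\cong\CC_{N_i}$.

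\textbf{Step 2: Chinese remainder theorem.} The points $a_i$ are pairwise distinct and nonzero. Hence the map $\CL\to\prod_i \CL/(t-a_i)^{N_i+1}$ is surjective, and it induces a surjection $\CL\fg\to\bigoplus_i \CC_{N_i}\fg$. The tensor product is the pullback of the outer tensor product $\boxtimes_i W_i$ over the direct sum of Lie algebras.

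\textbf{Step 3: irreducibility of the outer tensor product.} An outer tensor product of irreducible modules over a direct sum is irreducible provided the modules have countable dimension and endomorphism algebra $\C$ (Jacobson density / Dixmier's lemma). Highest weight modules have countable dimension, and their endomorphisms are scalars because the highest weight space is one-dimensional. Lemma \ref{pullback} then transfers irreducibility to $\CL\fg$.

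\textbf{Step 4: identify the highest weight.} The vector $v_1\otimes\cdots\otimes v_\nu$ is a highest weight vector whose weight is $\sum_i a_i^n\varphi_i$, namely $\varphi$. So the tensor product is irreducible with highest weight $\varphi$, and therefore $\cong V(\varphi)$.

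The main obstacle is Step 1, specifically showing that the $\fn^-$ part of the ideal kills $v_i$. The singular-vector check is done by applying $x_\beta\otimes t^k$ and using the vanishing of the Cartan part.
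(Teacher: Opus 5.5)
The paper gives no proof of this statement; it only cites \cite[Theorem 1.7]{CP}. Your argument is a correct, self-contained alternative, and it fits the paper's own machinery.
\begin{itemize}
\item Step 1 is the analogue of Proposition~\ref{pr:pbpichi}, with the point $1$ replaced by $a_i$.
\item Step 2 uses that the ideals $(t-a_i)^{N_i+1}\CL$ are pairwise comaximal in $\CL$. This is where both hypotheses on the $a_i$ enter: distinctness, and $a_i\neq 0$ (since $t$ is a unit in $\CL$).
\item Step 3 is the standard density argument. Since $\mathrm{End}=\C$ already follows from the one-dimensional highest weight space, the countability hypothesis and Dixmier's lemma are superfluous.
\end{itemize}
Compared with the citation, your route shows exactly where the hypotheses on the $a_i$ are used. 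It also exhibits each tensor factor as a pulled-back Takiff module, which is the form in which the result is later used in \eqref{char:loop-tak}.

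One step needs a small repair. In Step 1 you apply $x_\beta\otimes t^k$, with $\beta\in\Delta^+$, to $(x_{-\alpha}\otimes g)v_i$, where $g\in(t-a_i)^{N_i+1}\CL$. The commutator $[x_\beta,x_{-\alpha}]\otimes t^kg$ lies in $\fh\otimes(t-a_i)^{N_i+1}\CL$ only when $\beta=\alpha$, and in $\CL\fn^+$ only when $\beta-\alpha\in\Delta^+$. If $\alpha-\beta\in\Delta^+$, it lies in $\fg_{\beta-\alpha}\otimes(t-a_i)^{N_i+1}\CL$, and ``the vanishing of the Cartan part'' does not by itself show that this kills $v_i$. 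So the singular-vector argument has to be run as an induction on the height of $\alpha$, starting with simple roots.

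Alternatively, you can skip the singular-vector check entirely. Your finite-difference computation shows that the highest weight $n\mapsto a_i^n\varphi_i(\cdot\otimes t^n)$ vanishes on $\fh\otimes(t-a_i)^{N_i+1}\CL$. Lemma~\ref{pullback} then says the pullback of the corresponding irreducible highest weight $\CC_{N_i}\fg$-module is an irreducible highest weight $\CL\fg$-module with the same highest weight. By uniqueness it is isomorphic to $V(\varphi_i)[a_i]$, exactly as in the proof of Proposition~\ref{pr:pbpichi}. With either fix, the remaining steps go through as written.
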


\subsection{Connection with Takiff algebras}\label{sec:6.2}
In this subsection, we assume that $\varphi$ is a nonzero polynomial character on $\CL\h$. 
For each $\alpha\in \Pi$, write $f_\alpha(t)\in \C[t]$ such that 
\begin{align}\label{eq:falpha}
    \varphi(\alpha^\vee\otimes t^n)=f_\alpha(n)\quad \textrm{for all }n\in\Z.
\end{align}
Set
\[ N=\max\{\deg f_\alpha(t)\mid \alpha\in \Pi\},\]
which is a nonnegative integer. 
Here, as a convention, we assume that $\deg f_\alpha(t)=-1$ if $f_\alpha(t)=0$.

Note that the algebra automorphism
\[ \C[t]\rightarrow \C[t],\quad t\mapsto t-1 \] induces an isomorphism
\[
i_N:\CC_N\to \C[t]/(t-1)^{N+1}\C[t].
\]
On the other hand, the natural embedding $\C[t]\hookrightarrow \CL$
 induces an isomorphism
\[
j_N:\C[t]/(t-1)^{N+1}\C[t]
\to
\CL/(t-1)^{N+1}\CL.
\]
Then we have the composition map 
\[
\pi_N:\ \CL\xrightarrow{\text{the quotient map}}
 \CL/(t-1)^{N+1}\CL\xrightarrow{j_N^{-1}}\C[t]/(t-1)^{N+1}\C[t]\xrightarrow{i_N^{-1}}\CC_N.
\]
For every subalgebra $\mathfrak{a}$ of $\mathfrak{g}$, $\pi_N$ induces a surjective   homomorphism 
\[
\pi_{N,\mathfrak{a}}:\ \CL\mathfrak{a}\rightarrow \CC_N\mathfrak{a},\quad x\otimes f(t)\mapsto x\otimes \pi_N(f(t)).
\]
By definition, the kernel of $\pi_{N,\mathfrak{a}}$ is $\mathfrak{a}\otimes (t-1)^{N+1}\CL$.


\begin{lemt}
One has that
\[\varphi(\h\otimes (t-1)^{N+1}\CL)=0.\]
\end{lemt}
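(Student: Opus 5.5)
We need $\varphi(h\otimes g(t))=0$ for every $h\in\fh$ and every $g\in(t-1)^{N+1}\CL$. Since $\varphi$ is linear and $\fh$ is spanned by the coroots $\alpha^\vee$, $\alpha\in\Pi$, it suffices to treat $h=\alpha^\vee$. The ideal $(t-1)^{N+1}\CL$ is spanned over $\C$ by the Laurent polynomials $t^m(t-1)^{N+1}$ with $m\in\Z$, because $t$ is a unit in $\CL$. So it is enough to show
\[
\varphi\bigl(\alpha^\vee\otimes t^m(t-1)^{N+1}\bigr)=0\qquad\text{for all }\alpha\in\Pi,\ m\in\Z.
\]

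Expand $t^m(t-1)^{N+1}=\sum_{k=0}^{N+1}(-1)^{N+1-k}\binom{N+1}{k}t^{m+k}$. By \eqref{eq:falpha} the left-hand side becomes
\[
\sum_{k=0}^{N+1}(-1)^{N+1-k}\binom{N+1}{k}f_\alpha(m+k)=(\Delta^{N+1}f_\alpha)(m),
\]
where $\Delta$ is the forward difference operator $(\Delta f)(x)=f(x+1)-f(x)$. Each application of $\Delta$ lowers the degree of a nonzero polynomial by one and sends constants to $0$. By the definition of $N$, $\deg f_\alpha\le N$, so $\Delta^{N+1}f_\alpha=0$ identically. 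This also covers $f_\alpha=0$. Hence every summand vanishes and the lemma follows.

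The only step requiring care is the identification of the alternating binomial sum with the $(N+1)$-st finite difference. This is a routine induction on $N+1$ using $\Delta=T-1$, where $T$ is the shift $f(x)\mapsto f(x+1)$, and the binomial theorem for the commuting operators $T$ and $1$. So I expect no real obstacle here.
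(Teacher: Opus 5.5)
Your proof is correct and follows the paper's argument: reduce to $\varphi(\alpha^\vee\otimes t^m(t-1)^{N+1})$ for $\alpha\in\Pi$ and $m\in\mathbb{Z}$, expand binomially, and use that $\sum_{k=0}^{N+1}\binom{N+1}{k}(-1)^{N+1-k}f_\alpha(m+k)=0$ because $\deg f_\alpha\le N$. The only difference is that you prove this vanishing directly, by writing it as the $(N+1)$-st forward difference $(T-1)^{N+1}f_\alpha$, while the paper cites it from Eq.~(3.7) of the reference CGT.
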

\begin{proof}
It suffices to prove that
\[
\varphi(\alpha^\vee\otimes (t-1)^{N+1}t^m)=0
\]
for all $\alpha\in\Pi$ and $m\in\Z$. Since
$\deg f_\alpha(t)\leq N$, applying \cite[Eq.~(3.7)]{CGT} gives
\[
\sum_{k=0}^{N+1}
\binom{N+1}{k}(-1)^{N+1-k}f_\alpha(m+k)=0.
\]
Hence, by \eqref{eq:falpha},
\begin{align*}
\varphi(\alpha^\vee\otimes (t-1)^{N+1} t^m)
={}&
\sum_{k=0}^{N+1}
\binom{N+1}{k}(-1)^{N+1-k}
\varphi(\alpha^\vee\otimes t^{m+k})\\
={}&
\sum_{k=0}^{N+1}
\binom{N+1}{k}(-1)^{N+1-k}
f_\alpha(m+k)\\
={}&0,
\end{align*}
as desired.
\end{proof}
The above lemma implies that via the homomorphism $\pi_{N,\h}$, $\varphi$ induces a character on $\CC_N\h$, say $\bar\varphi$.  

\begin{prpt}\label{pr:pbpichi}
    One has the following $\CL\g$-module isomorphism:
    \[
    V(\varphi)\cong \pi_{N,\g}^{-1}(V_{\CC_N\mathfrak{b}}(\bar\varphi)).
    \]
\end{prpt}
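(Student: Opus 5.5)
The plan is to show that the pullback $\pi_{N,\g}^{-1}(V_{\CC_N\b}(\bar\varphi))$ is an irreducible highest weight $\CL\g$-module of highest weight $\varphi$. Irreducible highest weight modules are determined up to isomorphism by their highest weight, since each is the unique irreducible quotient of the corresponding Verma module $M(\varphi)$. So this identification gives the isomorphism with $V(\varphi)$.

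Step one: the map $\pi_N:\CL\to\CC_N$ is a surjective unital algebra homomorphism. Therefore Lemma \ref{pullback} applies with $\CR=\CL$, $\CR'=\CC_N$ and $\eta=\pi_N$. The module $V_{\CC_N\b}(\bar\varphi)$ is an irreducible highest weight $\CC_N\g$-module with highest weight $\bar\varphi$, so $\pi_{N,\g}^{-1}(V_{\CC_N\b}(\bar\varphi))$ is an irreducible highest weight $\CL\g$-module with highest weight $\pi_N^{-1}(\bar\varphi)$. Note that $\CL\b$ maps into $\CC_N\b$ and $\CL\n^+$ maps onto $\CC_N\n^+$, which is consistent with the highest weight conditions.

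Step two: I need to check that $\pi_N^{-1}(\bar\varphi)=\varphi$ on $\CL\h$. By definition of $\bar\varphi$, which exists by the preceding lemma, we have $\bar\varphi(h\otimes\pi_N(f))=\varphi(h\otimes f)$ for all $h\in\h$ and $f\in\CL$. This is exactly the statement that $\pi_N^{-1}(\bar\varphi)(h\otimes f)=\bar\varphi(h\otimes \pi_N(f))=\varphi(h\otimes f)$. The only real content is well-definedness, namely that $\varphi$ vanishes on $\ker\pi_{N,\h}=\h\otimes(t-1)^{N+1}\CL$, and this is precisely the preceding lemma.

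Step three: let $v$ be the highest weight vector of the pullback. The universal property of $M(\varphi)$ yields a surjection $M(\varphi)\to \pi_{N,\g}^{-1}(V_{\CC_N\b}(\bar\varphi))$, sending $v_\varphi\mapsto v$. Since the target is irreducible, it is isomorphic to the unique irreducible quotient $V(\varphi)$. I expect no genuine obstacle here; the one point requiring care, the vanishing on the kernel, is already supplied by the lemma.
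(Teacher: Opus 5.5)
Your proposal is correct and is the same argument as the paper's, whose proof consists only of invoking Lemma \ref{pullback} for the surjection $\pi_N:\CL\to\CC_N$. You also spell out the two points the paper leaves implicit: $\pi_N^{-1}(\bar\varphi)=\varphi$ by construction of $\bar\varphi$, and an irreducible highest weight $\CL\g$-module of highest weight $\varphi$ is isomorphic to $V(\varphi)$ because $M(\varphi)$ has a one-dimensional top weight space and hence a unique maximal submodule.
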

\begin{proof}
    The assertion follows from Lemma \ref{pullback}.
\end{proof}
In view of Propositions \ref{pr:quasifinite}, \ref{tensor}, and \ref{pr:pbpichi}, the character formula problem for irreducible highest weight $\CL\fg$-modules with finite-dimensional weight spaces reduces to that for irreducible modules of Takiff algebras associated with $\fg$.

\subsection{Character formula of $V(\varphi)$}\label{sec:charloop}
In this subsection, let $\varphi\in\mathcal{E}$. Then the $\CL\fg$-module $V(\varphi)$ has finite-dimensional weight spaces by Proposition \ref{pr:quasifinite}.
 We will present an explicit character formula for $V(\varphi)$. 

 When $\varphi=0$, it is obvious that $\mathrm{char\,} V(\varphi)=e(0)$. We henceforth assume that $\varphi\neq 0$. Then  there exist finitely many  distinct complex numbers $a_1, a_2,\dots, a_\nu$ and nonzero polynomial characters $\varphi_1,\varphi_2,\dots,\varphi_\nu$ such that $\varphi$ satisfies \eqref{eq:exp-poly}.

 Let $1\leq i\leq \nu$. For $\alpha\in \Pi$, write $f_{\alpha,i}(t)\in\C[t]$ such that 
\[\varphi_i(\alpha^\vee\otimes t^n)=f_{\alpha,i}(n)\quad \textrm{for all }n\in\Z.\]
Set
\[
N(\varphi_i) = \max\{\deg f_{\alpha,i}(t)\mid \alpha\in\Pi\},
\]
where we set $\deg f_{\alpha,i}(t)=-1$ if
$f_{\alpha,i}(t)=0$. For simplicity, write
$N_i=N(\varphi_i)$. As in the preceding subsection,
$\varphi_i$ induces a character $\bar\varphi_i$ on
$\CC_{N_i}\fh$. Recall that, for every $\alpha\in\Pi$,
\[
(\bar\varphi_i)_\alpha = \begin{cases} \max\{1\leq j+1\leq N_i+1 \mid \bar\varphi_i(\alpha^\vee(j))\neq0\},
&\mbox{if } f_{\alpha,i}(t)\neq0,\\
0,&\mbox{if } f_{\alpha,i}(t)=0.
\end{cases}
\]
By \cite[Eq.~(3.7)]{CGT}, one easily checks that
\[
(\bar\varphi_i)_\alpha= \deg f_{\alpha,i}(t)+1
\]
for all $\alpha\in\Pi$.
Thus we obtain a genuine $(N_i+2)$-partition
\[
\bm\Pi(\bar\varphi_i) = \left(
\Pi(\bar\varphi_i)_0, \Pi(\bar\varphi_i)_1, \dots, \Pi(\bar\varphi_i)_{N_i+1} \right)
\]
of $\Pi$, where
\[
\Pi(\bar\varphi_i)_0 = \{\alpha\in\Pi\mid f_{\alpha,i}(t)=0\},
\]
and
\[
\Pi(\bar\varphi_i)_j = \{\alpha\in\Pi\mid
\deg f_{\alpha,i}(t)=j-1\},
\qquad 1\leq j\leq N_i+1.
\]
Associated with this partition, we  have 
\[
\fh_{\bm\Pi(\bar\varphi_i)}
=
\sum_{k=1}^{N_i+1}
\sum_{\alpha\in\Pi(\bar\varphi_i)_k}
\sum_{j=0}^{k-1}
\C\alpha^\vee(j)
\subseteq
\CC_{N_i}\fh.
\]

\begin{dfnt}
 Let $\varphi\in(\CL\fh)^\ast$ be as in \eqref{eq:exp-poly}. We say that $\varphi$ is nonsingular if the induced  character $\bar{\varphi}_i|_{\fh_{\bm\Pi(\bar{\varphi}_i)}}$  is nonsingular for all $i=1,2,\dots,\nu$.
\end{dfnt}

Write $\bar\Delta_j^i(\varphi)$ for the root subsystem of $\Delta$ generated by $\cup_{k=0}^j\Pi(\bar{\varphi}_i)_k$  for $0\leq j\leq N_i+1$ and write
\[(\hat\Delta_0^{i,-}(\varphi),\hat\Delta_1^{i,-}(\varphi),\dots,\hat\Delta_{N_i+1}^{i,-}(\varphi))\]
for the associated  $(N_i+2)$-partition 
of $\Delta^-$, where $\hat\Delta_0^{i,-}(\varphi)=\bar\Delta_0^i(\varphi)\cap \Delta^-$ and
\[\hat\Delta_j^{i,-}(\varphi)=\left(\bar\Delta_j^i(\varphi)\backslash\bar\Delta_{j-1}^i(\varphi)\right)\cap\Delta^-\quad \textrm{for }j=1,2,\dots, N_i+1.\]

For $a\in\C^\ast$, one has $V(\varphi_i)[a]\cong V(\varphi_i)[1]$ as $\fh$-modules.
Then it follows from Propositions \ref{tensor}, \ref{pr:pbpichi} that 
\begin{align}\label{char:loop-tak}
    \mathrm{char}\, V(\varphi)=\prod_{i=1}^{\nu}\mathrm{char}\, V_{\CC_{N_i}\mathfrak{b}}(\bar\varphi_i),
\end{align}
while the character formula for $V_{\CC_{N_i}\mathfrak{b}}(\bar\varphi_i)$ is obtained in Theorem \ref{th:chartak} whenever $\bar{\varphi}_i|_{\fh_{\bm\Pi(\bar{\varphi}_i)}}$  is nonsingular. 

Thus, we obtain the following result.

\begin{thmt}\label{th:char-loop}
Let $0\neq\varphi\in\mathcal{E}$ be as in \eqref{eq:exp-poly}. Assume that $\varphi$ is nonsingular. Then 
\begin{align*}
    \mathrm{char}\, V(\varphi)= e(\varphi|_{\fh})\prod_{i=1}^{\nu}\left(\prod_{j=1}^{N_i+1}\prod_{\alpha\in\hat\Delta_j^{i,-}(\varphi)}(1-e(\alpha))^{-j}\right).
\end{align*}
\end{thmt}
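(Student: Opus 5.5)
The proof will be a direct assembly of results already in hand. I will start from \eqref{char:loop-tak}, which expresses $\mathrm{char}\,V(\varphi)$ as $\prod_{i=1}^{\nu}\mathrm{char}\,V_{\CC_{N_i}\b}(\bar\varphi_i)$. This formula comes from Proposition \ref{tensor}, from the isomorphism $V(\varphi_i)[a_i]\cong V(\varphi_i)$ as $\fh$-modules, and from Proposition \ref{pr:pbpichi}; the last applies because each $\varphi_i$ is a nonzero polynomial character. Since $\varphi$ is nonsingular, each $\bar\varphi_i|_{\fh_{\bm\Pi(\bar\varphi_i)}}$ is nonsingular, so Theorem \ref{th:chartak} applies to each factor with $N=N_i$ and $\chi=\bar\varphi_i$.

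For this application I will check that the data of Subsection \ref{sec:chartak} for $\chi=\bar\varphi_i$ agree with the data defined for $\varphi$. First, the integer $M$ there equals $N_i$. This holds because $(\bar\varphi_i)_\alpha=\deg f_{\alpha,i}+1$, so $\max_\alpha(\bar\varphi_i)_\alpha-1=N_i$. Second, the partition $\bm\Pi(\bar\varphi_i)$ is exactly the one written out above. It follows that $\hat\Delta_j(\bar\varphi_i)^-=\hat\Delta_j^{i,-}(\varphi)$ for all $j$. Theorem \ref{th:chartak} then gives
\[
\mathrm{char}\,V_{\CC_{N_i}\b}(\bar\varphi_i)=e(\bar\varphi_i|_{\fh})\prod_{j=1}^{N_i+1}\prod_{\alpha\in\hat\Delta_j^{i,-}(\varphi)}(1-e(\alpha))^{-j}.
\]
Here $\fh\subseteq\CC_{N_i}\fh$ means the degree-zero part $\fh\otimes 1$.

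It remains to combine the highest-weight exponentials. Multiplying the formulas gives $e\bigl(\sum_i\bar\varphi_i|_{\fh}\bigr)$. Since $\pi_{N_i}(1)=1$, we have $\bar\varphi_i(h(0))=\varphi_i(h\otimes 1)$. From \eqref{eq:exp-poly} at $n=0$ we get $\varphi(h\otimes 1)=\sum_i\varphi_i(h\otimes 1)$. Hence $\sum_i\bar\varphi_i|_{\fh}=\varphi|_{\fh}$, where $\fh$ is identified with $\fh\otimes 1\subseteq\CL\fh$. Substituting this into the product gives the stated formula.

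None of these steps is hard. The only point needing care is the bookkeeping identification $(\bar\varphi_i)_\alpha=\deg f_{\alpha,i}+1$ from \cite[Eq.~(3.7)]{CGT}, which the text records just before the definition. This identification guarantees that the partition and the shift index used in Theorem \ref{th:chartak} match the $\hat\Delta_j^{i,-}(\varphi)$ appearing in the statement.
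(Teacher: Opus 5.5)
Your proposal is correct and is essentially the paper's argument: combine \eqref{char:loop-tak}, obtained from Propositions \ref{tensor} and \ref{pr:pbpichi}, with Theorem \ref{th:chartak} applied to each $\bar\varphi_i$. Your explicit checks fill in bookkeeping the paper leaves implicit, namely that $M=N_i$, that the partitions and the sets $\hat\Delta_j$ agree, and that $\sum_i\bar\varphi_i|_{\fh}=\varphi|_{\fh}$ via $n=0$ in \eqref{eq:exp-poly}.
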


\section{Character formulas for the category $\widetilde{\mathcal{O}}$}\label{sec:7}
In this section, we present the character formulas for a class of irreducible Harish-Chandra modules in the category  $\widetilde{\mathcal{O}}$ for the affine Lie algebra associated with $\fg$ by applying Chari-Pressley’s loop module construction (see \cite{CP}).
\subsection{Chari's category $\widetilde{\mathcal{O}}$}\label{sec:7.1}
 Let
 \[\wt\fg=\CL\fg\oplus \C\rd\] denote the centerless affine Lie algebra associated with $\fg$, with the remaining Lie bracket given by
\begin{align*}
[\rd, x\ot t^m]=m\, x\ot t^m\quad\textrm{for }x\in\fg,~m\in\Z.
\end{align*}
Set 
\[
\text{$\wt\fh=  \fh\oplus\C\rd$\quad and\quad  $\wh\fh=\CL\fh\oplus\C\rd$.}\]
Let $\delta\in\wt\fh^*$ be defined by  $\delta|_{\fh}=0$ and $\delta(\rd)=1$. As usual, a $\wt\fg$-module (resp.\,$\wh\fh$-module) $V$ is called  a weight $\wt\fg$ (resp.\,$\wh\fh$)-module if it admits a weight space decomposition with respect to $\wt\fh$. Namely, 
 $V=\oplus_{\beta\in \wt\fh^{\ast}}V_\beta$,
 where
 \begin{align*}
     V_\beta=\{v\in V\mid h.v=\beta(h)v,\ \forall\,h\in \wt\fh\}.
 \end{align*}
We write
\[\mathcal{P}(V)=\{\beta\in \wt\fh^{\ast}\mid V_\beta\neq 0 \}\]
for the set of weights of  $V$. A weight $\wt\fg$-module $V$ is called a Harish-Chandra module if  \[\dim\, V_\beta<\infty\quad \textrm{for all }\beta\in \mathcal{P}(V).\]
Throughout this section, we consider Harish-Chandra $\wt\fg$-modules in the category $\widetilde{\mathcal{O}}$.

The category $\widetilde{\mathcal{O}}$ of $\wt\fg$-modules introduced by Chari \cite{C} is defined as follows: a module $V$ belongs to $\widetilde{\mathcal O}$ if and only if $V$ is a weight $\wt\fg$-module and there exist finitely many elements $\lambda_1,\lambda_2,\dots,\lambda_r\in \wt\fh^\ast$ such that 
\[\mathcal{P}(V)\subseteq \bigcup_{i=1}^r\{\lambda_i-\alpha+n\delta\mid \alpha\in Q^+,~n\in\mathbb{Z}\}.\]
The morphisms in the category $\widetilde{\mathcal{O}}$ are  $\wt\fg$-module homomorphisms.

Let $\varphi\in (\CL\fh)^\ast$ and let $b\in\mathbb{C}$. Define an $\wh\fh$-module structure on
 $\CL$ by setting
\begin{align*}
(h\ot t^m).t^n=\varphi(h\ot t^m) t^{m+n}\quad\te{and}\quad \rd.t^n=(n+b)\, t^n
\end{align*}
for all $h\in\fh,~m,n\in\Z$.
 Denote by $\CL(\varphi,b)$ the resulting $\wh\fh$-module, and write $\CL_{\varphi,b}$ for
the $\wh\fh$-submodule of $\CL(\varphi,b)$ generated by $1$. Let $A_\varphi$ be the  submonoid of $\BZ$ generated by the set
$
\{m\in\BZ\mid \varphi(h\ot t^m)\neq 0\textrm{ for some }h\in\fh\}.
$
Then \[\CL_{\varphi,b}=\textrm{Span}_\C\{t^m\mid m\in A_\varphi\}.\] Set
\[\mathcal{F}^\ast=\{\varphi\in (\CL\fh)^\ast\mid A_\varphi=r\Z\textrm{ for some }r\in\BN\}.\]

The following result is straightforward (cf. \cite{C}).
\begin{lemt}
    The $\wh\fh$-module $\CL_{\varphi,b}$ is irreducible if and only if $\varphi\in \mathcal{F}^\ast$.
\end{lemt}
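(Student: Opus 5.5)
The plan is to use the $\rd$-weight decomposition of $\CL_{\varphi,b}$ to reduce any submodule to a span of monomials. Then the submodule generated by a single monomial can be computed explicitly in terms of the monoid $A_\varphi$, and irreducibility becomes the statement that $A_\varphi$ is closed under negation, i.e. a subgroup of $\BZ$.

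\emph{Step 1: submodules are spanned by monomials.} By definition $\rd.t^n=(n+b)t^n$, and the eigenvalues $n+b$ are pairwise distinct for distinct $n$. So $\CL_{\varphi,b}=\mathrm{Span}_\C\{t^m\mid m\in A_\varphi\}$ is a weight module with one-dimensional weight spaces $\C t^m$. Since $\rd\in\wh\fh$, every $\wh\fh$-submodule is $\rd$-stable, and so it is the direct sum of its intersections with these weight spaces. Hence every submodule is of the form $\mathrm{Span}_\C\{t^m\mid m\in B\}$ for some $B\subseteq A_\varphi$.

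\emph{Step 2: the submodule generated by $t^m$.} Fix $m\in A_\varphi$. An element $h\ot t^k$ acts by $t^n\mapsto\varphi(h\ot t^k)t^{n+k}$. So the submodule $\mathcal U(\wh\fh).t^m$ is spanned by the vectors
\[
(h_1\ot t^{k_1})\cdots(h_s\ot t^{k_s}).t^m=\Bigl(\prod_{j=1}^s\varphi(h_j\ot t^{k_j})\Bigr)t^{m+k_1+\cdots+k_s}.
\]
Such a vector is nonzero exactly when each $k_j$ lies in the generating set $\{k\mid \varphi(h\ot t^k)\neq0\text{ for some }h\}$ of $A_\varphi$ and each $h_j$ is chosen with nonzero value. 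Therefore
\[
\mathcal U(\wh\fh).t^m=\mathrm{Span}_\C\{t^{m+a}\mid a\in A_\varphi\}.
\]
Taking $m=0$ recovers the description $\CL_{\varphi,b}=\mathrm{Span}_\C\{t^a\mid a\in A_\varphi\}$.

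\emph{Step 3: the equivalence.} By Step 1, $\CL_{\varphi,b}$ is irreducible if and only if every $t^m$ with $m\in A_\varphi$ generates all of $\CL_{\varphi,b}$. By Step 2 this holds if and only if $t^0=1\in\mathcal U(\wh\fh).t^m$ for every such $m$, i.e. $-m\in A_\varphi$ for all $m\in A_\varphi$. A submonoid of $\BZ$ closed under negation is a subgroup, hence equal to $r\BZ$ for some $r\in\BN$. Conversely, $r\BZ$ is closed under negation. This includes the degenerate case $\varphi=0$, where $A_\varphi=\{0\}=0\BZ$ and $\CL_{\varphi,b}=\C1$ is one-dimensional and irreducible.

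There is no real obstacle here. The only points needing care are the use of $\rd$ in Step 1 to guarantee that submodules are spanned by monomials, and the observation in Step 2 that all scalars $\varphi(h_j\ot t^{k_j})$ can be chosen nonzero, so that nothing cancels.
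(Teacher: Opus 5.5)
Your proof is correct and is exactly the routine argument the paper omits as ``straightforward (cf.\ Chari)''. The steps are: one-dimensional $\rd$-weight spaces force every submodule to be spanned by monomials, the cyclic submodule generated by $t^m$ is $\mathrm{Span}_\C\{t^{m+a}\mid a\in A_\varphi\}$, and so irreducibility is equivalent to $A_\varphi$ being closed under negation, i.e.\ $A_\varphi=r\BZ$. The one point you leave implicit, that $\rd$-factors in $\mathcal U(\wh\fh)$ only rescale monomials so that products of the $h\ot t^k$ suffice in Step 2, is harmless.
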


Let $\varphi\in\mathcal F^\ast$. Regard  $\CL_{\varphi,b}$ as an $\wh\fh\oplus \CL\fn^+$-module by letting $\CL\fn^+$ act trivially. Form the induced $\wt\fg$-module 
\begin{align*}
    \widetilde{M}(\varphi,b):=\mathcal{U}(\wt\fg)\otimes_{\mathcal{U}(\wh\fh\oplus \CL\fn^+)}\CL_{\varphi,b}.
\end{align*}
Write $\widetilde{V}(\varphi,b)$ for the irreducible quotient of $\widetilde{M}(\varphi,b)$. It is straightforward to verify that $\widetilde{V}(\varphi,b)$ is an irreducible object in $\widetilde{\mathcal{O}}$.
 Conversely, it was proved in \cite{C} that  every irreducible object of $ \widetilde{\mathcal{O}}$ is isomorphic to $\wt{V}(\varphi,b)$ for some $\varphi\in \mathcal{F}^\ast$ and $b\in\C$. 

We have the following result (cf. \cite{BGLZ, CP}).
\begin{prpt}\label{fexp}
Let $\varphi\in(\CL\fh)^\ast$ and let $b\in\C$.
\begin{enumerate}
    \item  If $\varphi$ is  exp-polynomial, then $\varphi\in \mathcal{F}^\ast$.
    \item   If $\varphi\in \mathcal{F}^\ast$, then $\widetilde V(\varphi,b)$ is a Harish-Chandra $\wt\fg$-module if and only if $\varphi$ is exp-polynomial.  
\end{enumerate}
\end{prpt}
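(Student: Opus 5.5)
Part (1) is essentially combinatorial. Write $\varphi$ as in \eqref{eq:exp-poly} with distinct $a_1,\dots,a_\nu\in\C^\times$ and nonzero polynomial characters $\varphi_1,\dots,\varphi_\nu$; if $\varphi=0$ then $A_\varphi=\{0\}=0\Z$ and we are done. Otherwise, for some $h\in\fh$ the function
\[
n\mapsto \varphi(h\ot t^n)=\sum_{i=1}^{\nu} a_i^n f_i(n),
\]
with polynomials $f_i$ not all zero, is a nonzero exp-polynomial function of $n$. The key step is to show that the set $Z=\{n\in\Z\mid \varphi(h\ot t^m)=0 \text{ for all } h\}$ cannot contain a full half-line on either side. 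I would argue via linear recurrences. A nonzero exp-polynomial sequence satisfies a linear recurrence with nonzero constant term, namely the one with characteristic polynomial $\prod_i (x-a_i)^{\deg f_i+1}$. The recurrence can be run both forward and backward since all $a_i\neq0$. Hence vanishing on $L$ consecutive integers, where $L$ is the order of the recurrence, forces the sequence to vanish identically, and uniqueness of the exp-polynomial decomposition then contradicts $\varphi\neq0$. Consequently the support $\{m\mid \varphi(h\ot t^m)\ne0 \text{ for some } h\}$ contains both positive and negative integers, with gaps of bounded length. The monoid generated by a subset of $\Z$ containing both a positive element $p$ and a negative element $-q$ is a subgroup: it contains $-p=(q-1)p+(-q)\cdot p$, more precisely $p\cdot(q-1)+(-q)p$ after rescaling, and similarly $q$. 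So $A_\varphi$ is a subgroup of $\Z$, hence equals $r\Z$, which gives $\varphi\in\mathcal F^\ast$.

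For part (2), I would follow the Chari--Pressley loop module construction \cite{CP}. Let $\varphi\in\mathcal F^\ast$ with $A_\varphi=r\Z$. If $r=0$, then $\varphi$ is supported at $t^0$ only. Then $\CL_{\varphi,b}$ is one-dimensional and $\widetilde V(\varphi,b)$ is a standard-type highest weight module, for which the claim reduces to Proposition \ref{pr:quasifinite}: such a $\varphi$ is polynomial of degree $0$, hence exp-polynomial. For $r>0$, the plan is to identify the $\wt\fh$-weight spaces of $\widetilde V(\varphi,b)$. The vector $1\otimes t^{m}$, for $m\in r\Z$, spans the weight space of weight $\varphi|_\fh+(m+b)\delta$. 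The key claim, as in \cite[Section 3]{CP}, is that $\widetilde V(\varphi,b)$ is isomorphic to an irreducible $\wt\fg$-submodule of the loop module $V(\varphi)\otimes\C[t^{r},t^{-r}]$, suitably twisted by $b$. Granting this, the weight space of weight $\lambda+(m+b)\delta$ is identified with a direct summand of $V(\varphi)_\lambda$. Finite-dimensionality of the weight spaces of $\widetilde V(\varphi,b)$ is then equivalent to that of $V(\varphi)$. By Proposition \ref{pr:quasifinite}, this holds if and only if $\varphi\in\mathcal E$.

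The main obstacle is this identification, and in particular the direction "Harish-Chandra implies exp-polynomial". I would try to prove that direction directly: if $\dim\widetilde V(\varphi,b)_{\varphi|_\fh-\alpha+n\delta}<\infty$ for a simple $\alpha$, then the vectors $x_{-\alpha}\ot t^{k}.(1\otimes t^{n-k})$ span a finite-dimensional space modulo the radical. The Shapovalov pairing of $x_{\alpha}\ot t^{j}$ against these vectors gives the matrix $\bigl(\varphi(\alpha^\vee\ot t^{j+k})\bigr)_{j,k}$ on the appropriate ranges. This infinite Hankel matrix must have finite rank. By Kronecker's theorem, a Hankel matrix of finite rank comes from a sequence satisfying a linear recurrence, i.e.\ from an exp-polynomial function of $n$. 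Doing this for every $\alpha\in\Pi$ shows that $\varphi$ is exp-polynomial. The converse direction I would take from \cite{BGLZ,CP} via the tensor product decomposition of Proposition \ref{tensor}.
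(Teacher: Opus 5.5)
The gap is the case $r=0$ of (2); your argument for (1) and your treatment of (2) when $A_\varphi=r\Z$ with $r>0$ are sound. The paper gives no proof of this statement and only refers to the literature, so your Hankel-rank argument is a genuinely self-contained route to the hard direction.

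For $r=0$ you assert that $\varphi$ is ``polynomial of degree $0$, hence exp-polynomial''. This is false. $A_\varphi=\{0\}$ means $\varphi(h\ot t^m)=0$ for all $m\neq0$, so $n\mapsto\varphi(\alpha^\vee\ot t^n)$ is a multiple of $\delta_{n,0}$, not a constant. By your own recurrence argument from (1), such a $\varphi$ is exp-polynomial only if $\varphi=0$. So for $r=0$ and $\varphi\neq0$ you must prove that $\widetilde V(\varphi,b)$ is \emph{not} Harish-Chandra, which is the opposite of what you conclude. Neither of your tools gives this:
\begin{itemize}
\item Proposition~\ref{pr:quasifinite} only says that some $\fh$-weight space of $V(\varphi)\cong\widetilde V(\varphi,b)|_{\CL\fg}$ is infinite-dimensional. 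The Harish-Chandra condition concerns $\wt\fh$-weight spaces, i.e.\ the $\rd$-eigenspaces inside it, and these could a priori all be finite-dimensional.
\item Your Hankel argument is vacuous here. Since $\CL_{\varphi,b}=\C1$, the weight space $\varphi|_\fh-\alpha+(n+b)\delta$ is spanned by the single vector $(x_{-\alpha}\ot t^n)\ot1$.
\end{itemize}

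A fix is to go to depth $2\alpha$. Choose $\alpha\in\Pi$ with $c:=\varphi(\alpha^\vee)\neq0$, and set $v_p:=(x_{-\alpha}\ot t^{-p})(x_{-\alpha}\ot t^{n+p})\ot1$ for $p\in\Z$. These span the weight space of weight $\varphi|_\fh-2\alpha+(n+b)\delta$ in $\widetilde M(\varphi,b)$, and a direct computation gives
\[
(x_\alpha\ot t^k).v_p=\bigl(c(\delta_{k,p}+\delta_{k,-n-p})-2\bigr)(x_{-\alpha}\ot t^{n+k})\ot1 .
\]
Each $(x_{-\alpha}\ot t^{m})\ot1$ is nonzero in $\widetilde V(\varphi,b)$, since $x_\alpha\ot t^{-m}$ sends it to $c\cdot1$. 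Among elements of $\CL\fn^+$, only the $x_\alpha\ot t^k$ act nontrivially on this weight space. Hence its image in $\widetilde V(\varphi,b)$ has dimension equal to the rank of $\bigl(c(\delta_{k,p}+\delta_{k,-n-p})-2\bigr)_{k,p}$, which is infinite.

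Two smaller remarks on $r>0$.
\begin{itemize}
\item The loop-module embedding alone does not give the asserted equivalence with $V(\varphi)$. A subspace of $V(\varphi)_\lambda\ot t^m$ can be finite-dimensional while $V(\varphi)_\lambda$ is not, so the Hankel argument is genuinely needed for the hard direction.
\item Do not invoke Kronecker's theorem in its one-sided form, which only yields a recurrence for large indices. Use the bi-infinite index ranges you actually have. Finite rank means the translates $m\mapsto\varphi(\alpha^\vee\ot t^{m+r\ell})$, $\ell\in\Z$, span a finite-dimensional space on which translation by $r$ acts invertibly. Its minimal polynomial then gives a recurrence with nonzero constant term valid on all of $\Z$, hence an exp-polynomial.
\end{itemize}
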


\subsection{Character formula for $\wt{V}(\varphi,b)$}
Let $V$ be a Harish-Chandra $\wt\fg$-module. As usual, write
\begin{equation*}
\textrm{char}\,V\ =\ \sum_{\lambda\in\wt\fh^{\ast}}\textrm{dim\,}V_{\lambda}\cdot e(\lambda)
\end{equation*}
for the character of $V$. 

Let $(\varphi,b)\in\mathcal F^\ast\times\C$, and assume that $\widetilde V(\varphi,b)$ is a Harish-Chandra module. By Proposition \ref{fexp}, we have $\varphi\in\mathcal E$. If $\varphi=0$, then $\widetilde V(\varphi,b)$ is the one-dimensional $\wt\fg$-module on which $\CL\fg$ acts trivially and $\rd$ acts as the scalar $b$. Hence
\[
\operatorname{char}\widetilde V(\varphi,b)=e(b\delta).
\]
We henceforth assume that $\varphi\neq0$. Since
$\varphi\in\mathcal F^\ast$, we have
\[
A_\varphi=r\Z
\]
for some $r\in\Z_+$. Then $\wt{V}(\varphi,b)$ has finite-dimensional weight spaces by Proposition \ref{fexp}. Write $\varepsilon$ for a primitive $r$-th root of unity. By \cite{CP}, we may (and do) write 
\begin{align}\label{eq:decomvarphi}
\varphi=\varphi^0+\varphi^1+\dots+\varphi^{r-1}, 
\end{align}
where  $\varphi^j\in\mathcal{E}$ for $0\leq j\leq r-1$, satisfies
\begin{align*}
\varphi^j(\alpha^\vee\otimes t^n)
=\sum_{i=1}^{\nu}(\varepsilon^j a_i)^n f_{\alpha,i}(n)
\qquad \te{for }\alpha\in\Pi,~n\in\Z.
\end{align*}
Here, $a_1,\dots,a_\nu$ are distinct nonzero complex numbers with $
  (a_p/a_q)^r\neq 1$ for $1\leq p\neq q\leq \nu$, and
$f_{\alpha,i}(t)\in\C[t]$ for all $\alpha\in\Pi$ and $1\leq i\leq\nu$. For later use, denote by $\varphi_i$  the polynomial character of $\CL\fh$ determined by the polynomials $f_{\alpha,i}(t)$, $\alpha\in\Pi$. That is,
\begin{align*}
    \varphi_{i}(\alpha^\vee\otimes t^n)=f_{\alpha,i}(n)\quad\te{for all }\alpha\in\Pi,~n\in\BZ. 
\end{align*}


By Proposition \ref{tensor}, for $0\leq j\leq r-1$, we have
\[
V(\varphi^j)  \cong V(\varphi_1)[\varepsilon^ja_1]\otimes\cdots\otimes
V(\varphi_\nu)[\varepsilon^ja_\nu].
\]
Since twisting does not change the underlying vector space, we identify the $V(\varphi^j)$, $0\leq j\leq r-1$, with the same vector space. Moreover, Proposition \ref{tensor} gives
\[
V(\varphi) \cong V(\varphi^0)\otimes\cdots\otimes V(\varphi^{r-1})
\]
as vector spaces.
Consider the cyclic permutation $\sigma=(r-1,r-2,\dots,0)$. It induces a linear automorphism of $V(\varphi)$ of order $r$, 
still denoted by $\sigma$, given by
\[\sigma(v_0\otimes v_1\otimes \cdots\otimes v_{r-1})=v_{r-1}\otimes v_0\otimes\cdots\otimes v_{r-2}.\]
Then $\sigma$ is diagonalizable and hence
\[V(\varphi)=\bigoplus_{j=0}^{r-1}V(\varphi)_j,\]
where 
\[V(\varphi)_j=\{v\in V(\varphi)\mid \sigma(v)=\varepsilon^jv\}\]
denotes the eigenspace corresponding to the eigenvalue $\varepsilon^j$. 

For every $n\in\Z$, write $\bar{n}$ for the unique element in $\{0,1,\dots,r-1\}$ such that $n-\bar{n}\in r\Z$.
 \begin{lemt}\label{le:submod}
     Let $j=0,1,\dots,r-1$, $x\in \fg$, and let $m\in\Z$. Then
     \[(x\otimes t^m).V(\varphi)_j\subseteq V(\varphi)_{\overline{j-m}}.\]
 \end{lemt}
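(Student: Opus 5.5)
The plan is to prove the identity
\[
\sigma\circ (x\otimes t^m)=\varepsilon^{m}\,(x\otimes t^m)\circ\sigma
\]
as operators on $V(\varphi)$. Once this is known, for $v\in V(\varphi)_j$ we get $\sigma\bigl((x\otimes t^m).v\bigr)=\varepsilon^{m}\varepsilon^{j}(x\otimes t^m).v$. Hence $(x\otimes t^m).v$ lies in an eigenspace of $\sigma$ for an eigenvalue determined by $j\pm m$. The sign must be fixed so that it matches the claimed index $\overline{j-m}$. This depends on the direction of the cyclic permutation $\sigma=(r-1,r-2,\dots,0)$; I will choose the convention accordingly and, if necessary, swap $\varepsilon$ with $\varepsilon^{-1}$, which is also a primitive root of unity.

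To get the identity, I first write down the action on the tensor product explicitly. Under the identification $V(\varphi)\cong V(\varphi^0)\otimes\cdots\otimes V(\varphi^{r-1})$, all $V(\varphi^k)$ share the same underlying space $U:=V(\varphi_1)[a_1]\otimes\cdots\otimes V(\varphi_\nu)[a_\nu]=V(\varphi^0)$. On the $k$-th factor, the twist by $\varepsilon^k$ gives
\[
(x\otimes t^m)\cdot_k u=\varepsilon^{km}(x\otimes t^m)\cdot_0 u ,
\]
because $V(\varphi_i)[\varepsilon^k a_i]=(V(\varphi_i)[a_i])[\varepsilon^k]$. Therefore, by the coproduct,
\[
(x\otimes t^m).(v_0\otimes\cdots\otimes v_{r-1})=\sum_{k=0}^{r-1}\varepsilon^{km}\,v_0\otimes\cdots\otimes (x\otimes t^m)\cdot_0 v_k\otimes\cdots\otimes v_{r-1}.
\]

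Next I compare the two sides. Applying $\sigma$ moves the $k$-th slot to position $k+1$ modulo $r$. So $\sigma\bigl((x\otimes t^m).v\bigr)$ carries coefficient $\varepsilon^{km}$ at slot $k+1$. On the other side, $(x\otimes t^m).\sigma(v)$ carries coefficient $\varepsilon^{(k+1)m}$ at slot $k+1$. For the wrap-around term, where $k=r-1$ goes to slot $0$, the comparison uses $\varepsilon^{rm}=1$. This yields $(x\otimes t^m)\sigma=\varepsilon^{m}\sigma(x\otimes t^m)$, which is equivalent to $\sigma(x\otimes t^m)=\varepsilon^{-m}(x\otimes t^m)\sigma$. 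Applied to $v\in V(\varphi)_j$, this gives eigenvalue $\varepsilon^{j-m}$, which is exactly the claimed $\overline{j-m}$.

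The only delicate point is the bookkeeping. One needs the identification of $V(\varphi)$ with the tensor product to be the one in which the $k$-th factor carries the $\varepsilon^k$-twist, and the wrap-around needs $r\mid rm$. Everything else is a routine computation on pure tensors, which span $V(\varphi)$.
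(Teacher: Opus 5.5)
Your proposal is correct and follows essentially the same route as the paper: both prove $\sigma\circ(x\otimes t^m)=\varepsilon^{-m}(x\otimes t^m)\circ\sigma$ by observing that the $k$-th tensor factor carries the $\varepsilon^k$-twist of $V(\varphi^0)$ (the paper phrases this as $\phi_{\varphi^j}(x\otimes t^m)=\varepsilon^{-m}\phi_{\varphi^{\overline{j+1}}}(x\otimes t^m)$), and then read off the eigenvalue $\varepsilon^{j-m}$. Two small points: the identity announced in your first paragraph has the wrong sign, and the remark about swapping $\varepsilon$ for $\varepsilon^{-1}$ is unnecessary, since your slot-by-slot computation with the paper's explicit $\sigma$ already gives $\varepsilon^{-m}$ directly.
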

 \begin{proof}
    By Proposition \ref{tensor}, we obtain
     \begin{align}
         V(\varphi^j)=V(\varphi_1)[\varepsilon^ja_1]\otimes V(\varphi_2)[\varepsilon^ja_2]\otimes\cdots\otimes V(\varphi_\nu)[\varepsilon^ja_\nu]
     \end{align}
     as $\CL\fg$-modules.
     For the $\CL\fg$-module $V(\varphi^j)$, write $\phi_{\varphi^j}:\CL\fg\to \textrm{End} (V(\varphi^j))$ for the corresponding Lie algebra homomorphism. 
 Then we have that 
 \begin{align*}
     &\phi_{\varphi^j}(x\otimes t^m)(u_{1}\otimes u_2\otimes\cdots\otimes u_{\nu})\\
     =\ &\varepsilon^{jm}\sum_{i=1}^{\nu}a_i^mu_{1}\otimes\cdots \otimes u_{i-1}\otimes (x\otimes t^m).u_i\otimes u_{i+1}\otimes\cdots\otimes u_\nu\\
     =\ &\varepsilon^{-m}\phi_{\varphi^{\overline{j+1}}}(x\otimes t^m)(u_{1}\otimes u_2\otimes\cdots\otimes u_{\nu}),
 \end{align*}
 where $u_i\in V(\varphi_i)[\varepsilon^ja_i]$ and $1\leq i\leq \nu$.
 This implies 
 \begin{align*}
\sigma((x\otimes t^m).v)=\varepsilon^{-m}(x\otimes t^m).\sigma(v)=\varepsilon^{j-m}(x\otimes t^m).v\quad \textrm{for all }v\in V(\varphi)_j,
 \end{align*}
as desired.
 \end{proof}
Recall the loop module construction from \cite{CP}. Assume that $W$ is a highest weight $\CL\fg$-module with highest weight $\varphi$. Then there exists a natural $\wt\fg$-module structure on the loop space $W\ot\CL$ such that the action is given by 
\[(x\ot t^n).(w\ot t^m)\ =\ (x\ot t^n.w)\ot t^{m+n}\quad \te{and}\quad\rd.(w\ot t^m)\ =\ (m+b)w\ot t^m,\]
where $x\in\fg$, $m,n\in\Z$ and $w\in W$. Denote by  $\CL(W,b)$ the resulting $\wt\fg$-module. 

Extend $\sigma$ to a linear automorphism $\widetilde{\sigma}$ of $\CL(V(\varphi),b)$ by letting
\begin{align}\label{eqsigma}
    \widetilde{\sigma}(v\otimes t^n)=\varepsilon^n \sigma(v)\otimes t^n\quad \textrm{for }v\in V(\varphi),~ n\in \Z.
\end{align}
We have the following eigenspace decomposition with respect to $\widetilde{\sigma}$:
\begin{align*}
    \CL(V(\varphi),b)=\bigoplus_{j=0}^{r-1} \CL(V(\varphi),b)_{j},
\end{align*}
where 
\[\CL(V(\varphi),b)_j=\{v\in \CL(V(\varphi),b)\mid \widetilde{\sigma}(v)=\varepsilon^j v\}.\] 
Then it follows from \eqref{eqsigma} that
\begin{align}\label{eq:lchibj}
    \CL(V(\varphi),b)_j=\bigoplus_{n\in\Z}V(\varphi)_{\overline{j-n}}\otimes \C t^n\quad \textrm{for }j=0,1,\dots,r-1.
\end{align}
Furthermore, Lemma \ref{le:submod} implies that 
\begin{align*}
    (x\otimes t^m).(V(\varphi)_{\overline{j-n}}\otimes \C t^n)\subseteq V(\varphi)_{\overline {j-m-n}}\otimes \C t^{m+n} \subseteq \CL(V(\varphi),b)_j
\end{align*}
for all $x\in\fg$ and $m\in\BZ$.
Consequently, each eigenspace $\CL(V(\varphi),b)_j$ is an $\wt\fg$-submodule of $\CL(V(\varphi),b)$.

The following result is obtained in \cite[Theorem 4.7]{CP}.
\begin{prpt} \label{vchibi}
Let $i=0,1,\dots,r-1$. There is an isomorphism of $\wt\fg$-modules
\[\CL(V(\varphi),b)_i\cong \wt{V}(\varphi,b+i).\]
\end{prpt}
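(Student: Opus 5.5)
The plan is to exhibit a nonzero $\wt\fg$-homomorphism $\widetilde M(\varphi,b+i)\to\CL(V(\varphi),b)_i$, and then to show that $\CL(V(\varphi),b)_i$ is irreducible. The isomorphism then follows, since the image of such a map is a nonzero submodule of an irreducible module, and the irreducible quotient of $\widetilde M(\varphi,b+i)$ is unique.

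\textbf{Constructing the map.} Let $v_\varphi\in V(\varphi)$ be the highest weight vector, which we identify with $v_0\otimes\cdots\otimes v_0$ in the tensor decomposition. Since $\sigma(v_\varphi)=v_\varphi$, we have $v_\varphi\in V(\varphi)_0$. By \eqref{eq:lchibj}, the vectors $v_\varphi\otimes t^n$ with $\bar n=i$ lie in $\CL(V(\varphi),b)_i$, i.e.\ those with $n\in i+r\Z$. On the span $\mathrm{Span}_\C\{v_\varphi\otimes t^{i+m}\mid m\in r\Z\}$ the algebra $\CL\fn^+$ acts by zero. The element $h\otimes t^m$ sends $v_\varphi\otimes t^{i+k}$ to $\varphi(h\otimes t^m)\,v_\varphi\otimes t^{i+k+m}$, and $\rd$ acts on $v_\varphi\otimes t^{i+k}$ by $(i+k)+b$. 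Hence the map $t^k\mapsto v_\varphi\otimes t^{i+k}$, for $k\in A_\varphi=r\Z$, is an $\wh\fh\oplus\CL\fn^+$-module homomorphism $\CL_{\varphi,b+i}\to\CL(V(\varphi),b)_i$. It is nonzero, so Frobenius reciprocity gives a nonzero $\wt\fg$-homomorphism $\widetilde M(\varphi,b+i)\to\CL(V(\varphi),b)_i$.

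\textbf{Irreducibility (main obstacle).} It remains to prove that $\CL(V(\varphi),b)_i$ is irreducible. Let $S\neq0$ be a submodule. Because $\CL(V(\varphi),b)_i$ is a weight module for $\wt\fh$, $S$ contains a nonzero weight vector $w\otimes t^n$. Applying $\CL\fn^+$ raises the $\fh$-weight, and the weights of $V(\varphi)$ are bounded above, so we may reach a nonzero vector $w'\otimes t^{n'}\in S$ whose $\fh$-weight is the top weight $\varphi|_\fh$.

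The top weight space $V(\varphi)_{\varphi|_\fh}$ is one-dimensional, spanned by $v_\varphi\in V(\varphi)_0$. Hence $w'\in\C v_\varphi$, and $\overline{i-n'}=0$, so $n'\equiv i\pmod r$. Next, $\CL\fh$ acts on these top vectors via the $\wh\fh$-module $\CL_{\varphi,b+i}$, which is irreducible because $\varphi\in\mathcal F^\ast$ (by the lemma in Subsection \ref{sec:7.1}). So $S$ contains every $v_\varphi\otimes t^{i+k}$ with $k\in r\Z$.

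The remaining and genuinely hard step is to show that these vectors generate all of $\CL(V(\varphi),b)_i$. One way is to reprove Chari–Pressley's argument: show that $\CL(V(\varphi),b)_i$ is generated by its top components. This uses $V(\varphi)=\mathcal U(\CL\fn^-)v_\varphi$ together with the condition $(a_p/a_q)^r\neq1$. That condition lets one separate the $r$ twisted tensor factors through a Vandermonde-type argument with Lagrange interpolation in $t^r$, in the same way as the proof that the tensor product in Proposition \ref{tensor} is irreducible.

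Concretely, the target is to show that for each weight, the elements $(x\otimes t^m)$ applied to the vectors $v_\varphi\otimes t^{i+k}$ span $V(\varphi)_{\overline{j}}\otimes t^n$. This is the key technical point, and I would import it from \cite[Theorem 4.7]{CP}, of which the present statement is exactly a restatement.
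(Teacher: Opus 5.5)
Your proposal is correct and in substance matches the paper, which gives no argument of its own and simply cites \cite[Theorem 4.7]{CP}. Your preliminary checks are accurate: the $\sigma$-fixed top vectors $v_\varphi\otimes t^{i+k}$ ($k\in r\Z$) span a copy of $\CL_{\varphi,b+i}$, which yields a nonzero map from $\widetilde M(\varphi,b+i)$, and every nonzero submodule contains all of these vectors. The one substantive step, that these vectors generate $\CL(V(\varphi),b)_i$ (where $(a_p/a_q)^r\neq1$ is needed), is imported from the same Chari--Pressley theorem, exactly as in the paper.
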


Now we can compute the character formula for $\wt{V}(\varphi,b)$ using the eigenspace decomposition above. To state the character formula, we use the following standard arithmetic functions.
Let 
\[\phi:\Z_{+}\to \Z_{+}\]
denote  Euler's totient function such that 
\[\phi(d)=\operatorname{Card}\{n\in\BN\mid 1\leq n\leq d,~\gcd (d,n)=1\}\quad\te{for }d\in\Z_{+}.\]
Let 
\[\mu:\Z_+\to\{-1,0,1\}\]
denote the Möbius function, defined by $\mu(1)=1$,
$\mu(d)=(-1)^\ell$ if $d$ is a product of $\ell$  distinct primes, and $\mu(d)=0$ otherwise. For every positive integer $d$ and integer $n$, write
\[
c_d(n)=\frac{\phi(d)\mu(d')}{\phi(d')}
\quad \big(d'=\frac{d}{\gcd(d,n)} \big)
\]
for the Ramanujan sum. 

\begin{prpt}  \label{pr:char-affine}
Let $\varphi\in\mathcal E$ satisfy
$A_\varphi=r\Z$ for some $r\in\Z_+$, and let
\eqref{eq:decomvarphi} be the decomposition above. Let $b\in\C$.  Then 
\begin{align*}
\mathrm{char}\,\wt{V}(\varphi,b)\ =\ \frac{1}{r}\sum_{n\in\Z}\sum_{d\mid r}c_{d}(\bar{n})(\mathrm{char}\,V(\varphi^0,d))^{\frac{r}{d}}e((-n+b)\delta),
\end{align*}
where $\mathrm{char}\,V(\varphi^0,d)$ is obtained from $\mathrm{char}\,V(\varphi^0)$ by replacing $e(\alpha)$ by $e(d\alpha)$ for every $\alpha\in\fh^\ast$.
\end{prpt}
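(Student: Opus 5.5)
By Proposition \ref{vchibi}, with $b$ replaced by $b-i$, we have $\wt V(\varphi,b)\cong \CL(V(\varphi),b-i)_i$ for any $i$. It is simplest to use $i=0$ and compute
\[
\mathrm{char}\,\CL(V(\varphi),b)_0=\sum_{n\in\Z}\mathrm{char}\,V(\varphi)_{\overline{-n}}\;e((n+b)\delta),
\]
which follows from \eqref{eq:lchibj}, since $\rd$ acts on $V(\varphi)_{\overline{-n}}\otimes t^n$ by $n+b$. Reindexing $n\mapsto -n$ gives the sum $\sum_n \mathrm{char}\,V(\varphi)_{\bar n}\, e((-n+b)\delta)$. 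It therefore remains to show
\[
\mathrm{char}\,V(\varphi)_{j}=\frac1r\sum_{d\mid r}c_d(j)\bigl(\mathrm{char}\,V(\varphi^0,d)\bigr)^{r/d}
\]
as $\fh$-characters, for $0\le j\le r-1$.

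For this I would use the projection formula. Since $\sigma$ commutes with $\fh$ (each factor $V(\varphi^j)$ is the same $\fh$-module), we have
\[
\mathrm{char}\,V(\varphi)_j=\frac1r\sum_{k=0}^{r-1}\varepsilon^{-jk}\,\mathrm{tr}_{\fh}(\sigma^k),
\]
where $\mathrm{tr}_{\fh}(\sigma^k)=\sum_\lambda \mathrm{tr}(\sigma^k|_{V(\varphi)_\lambda})e(\lambda)$ denotes the graded trace. All weight spaces involved are finite-dimensional, because $V(\varphi)$ and the $V(\varphi^j)$ are Harish-Chandra modules by Proposition \ref{pr:quasifinite}, so these traces make sense.

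The key step is the standard graded trace of a cyclic permutation on a tensor power. Let $U=V(\varphi^0)$ be the common underlying $\fh$-module of all the factors. Then $V(\varphi)\cong U^{\otimes r}$ as $\fh$-modules and $\sigma^k$ permutes the factors by a permutation with $g=\gcd(k,r)$ cycles, each of length $d=r/g$. Taking a weight basis of $U$, the only basis tensors fixed by $\sigma^k$ are those constant along cycles, so the graded trace is $\prod_{\text{cycles}}\sum_\lambda \dim U_\lambda\, e(d\lambda)=(\mathrm{char}\,V(\varphi^0,d))^{r/d}$. Here one checks that a fixed tensor with entry of weight $\lambda$ on a cycle of length $d$ contributes weight $d\lambda$, and that all non-fixed basis tensors contribute zero to the trace.

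Finally I would group the $k$ by $d=r/\gcd(k,r)$. For fixed $d\mid r$, the values $\varepsilon^{k}$ with $r/\gcd(k,r)=d$ run over the primitive $d$-th roots of unity $\zeta$. Hence the coefficient of $(\mathrm{char}\,V(\varphi^0,d))^{r/d}$ is $\frac1r\sum_{\zeta \text{ primitive}}\zeta^{-j}$, which is the Ramanujan sum. By the classical evaluation $\sum_{\zeta}\zeta^{n}=\mu(d')\phi(d)/\phi(d')$ with $d'=d/\gcd(d,n)$, this equals $\frac1r c_d(j)$. Substituting $j=\bar n$ then yields the stated formula.

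I expect the main obstacle to be bookkeeping consistency rather than any single hard step. The eigenvalue and indexing conventions ($\overline{j-n}$ versus $\overline{n}$, and the sign of $n$ in $e((-n+b)\delta)$) must match the orientation of $\sigma$ in \eqref{eqsigma}. Since $c_d$ is even in $n$, a sign slip in $j$ is harmless, but the $\delta$-shift needs to be checked carefully against \eqref{eq:lchibj}.
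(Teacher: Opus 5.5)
Your proof is correct and follows the same route as the paper. Both arguments use \eqref{eq:lchibj} and Proposition~\ref{vchibi} (with $i=0$) to reduce to $\sum_{n}\mathrm{char}\,V(\varphi)_{\bar n}\,e((-n+b)\delta)$, and then compute the $\fh$-characters of the eigenspaces of the cyclic shift on $V(\varphi)\cong V(\varphi^0)^{\otimes r}$. The only difference is that the paper quotes the eigenspace dimension formula from \cite[Proposition B.2]{HLN}, whereas you derive it yourself from three ingredients: the projection $\frac1r\sum_k\varepsilon^{-jk}\sigma^k$, the fixed-point count $\mathrm{tr}_{\fh}(\sigma^k)=(\mathrm{char}\,V(\varphi^0,d))^{r/d}$, and the classical evaluation of the Ramanujan sum. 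This makes your argument self-contained.
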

\begin{proof}
Note that for every $n\in\Z$, the subspace $V(\varphi)_{\overline{n}}$ is $\fh$-invariant.
By \eqref{eq:lchibj} and Proposition \ref{vchibi}, we have 
\begin{align}
    \mathrm{char}\,\wt{V}(\varphi,b)= \sum_{n\in\Z} \mathrm{char}\,(V(\varphi)_{\overline{n}})e((-n+b)\delta).
\end{align}
 As $\fh$-modules, we have 
\[V(\varphi)\cong V(\varphi^0)^{\otimes r}.\]
Then by \cite[Proposition B.2]{HLN} (cf. \cite[Theorem 5.5]{W}),
 we obtain
\begin{align*}
    \dim\,\left((V(\varphi^0)^{\otimes r})_{\overline{n}}\right)_{\varphi|_\fh-\alpha}= \frac{1}{r}\sum_{d\mid r}c_{d}(\bar{n})\dim\,\left((V(\varphi^0)^{\otimes \frac{r}{d}}\right)_{\frac{1}{d}\varphi|_\fh-\frac{\alpha}{d}}
\end{align*}
for every $\alpha\in Q^+$. It follows that 
\begin{align*}
   \mathrm{char}\,V(\varphi)_{\overline{n}}=\ & \sum_{\alpha\in Q^+}\dim\, \left((V(\varphi^0)^{\otimes r})_{\overline{n}}\right)_{\varphi|_\fh-\alpha} e(\varphi|_\fh-\alpha)\\
   =\  &\frac{1}{r}\sum_{\alpha\in Q^+}\sum_{d\mid r}c_{d}(\bar{n})\dim\, \left((V(\varphi^0)^{\otimes \frac{r}{d}}\right)_{\frac{1}{d}\varphi|_\fh-\frac{\alpha}{d}} e(\varphi|_\fh-\alpha)\\
   =\ &\frac{1}{r}\sum_{d\mid r}c_{d}(\bar{n})\left(\mathrm{char}\,V(\varphi^0,d)\right)^{\frac{r}{d}},
\end{align*}
where $\mathrm{char}\,V(\varphi^0,d)$ is obtained from $\mathrm{char}\,V(\varphi^0)$ by replacing $e(\alpha)$ by $e(d\alpha)$ for every $\alpha\in\fh^\ast$. 
\end{proof}

Recall the notation $N(\varphi_{i})$ and $\hat\Delta_j^{i,-}(\varphi^0)$ introduced in Subsection \ref{sec:charloop}.
Note that $\varphi^0$ is nonsingular if and only if  $\varphi$ is nonsingular. Combining Theorem \ref{th:char-loop} and Proposition \ref{pr:char-affine}, we obtain an explicit character formula for $\wt{V}(\varphi,b)$ in the nonsingular case. 
\begin{thmt}\label{th:cha}
Let $\varphi\in\mathcal E$ satisfy
$A_\varphi=r\Z$ for some $r\in\Z_+$, and let
\eqref{eq:decomvarphi} be the decomposition above. Let $b\in\C$. Assume that $\varphi$ is nonsingular. Then 
\begin{align*}
&\mathrm{char}\,\wt{V}(\varphi,b) \\
=\ &\ \frac{1}{r}e(\varphi|_\fh)\sum_{n\in\Z}\sum_{d\mid r}c_{d}(\bar{n})\left(\prod_{i=1}^{\nu}\prod_{j=1}^{N(\varphi_{i})+1}\prod_{\alpha\in\hat\Delta_j^{i,-}(\varphi^0)}(1-e(d\alpha))^{-\frac{jr}{d}}\right)e((-n+b)\delta).
\end{align*}
\end{thmt}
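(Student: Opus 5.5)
The plan is to substitute the loop-algebra character formula into Proposition \ref{pr:char-affine}. By that proposition,
\[
\mathrm{char}\,\wt{V}(\varphi,b)=\frac{1}{r}\sum_{n\in\Z}\sum_{d\mid r}c_{d}(\bar{n})\bigl(\mathrm{char}\,V(\varphi^0,d)\bigr)^{\frac{r}{d}}e((-n+b)\delta),
\]
so it suffices to compute $\mathrm{char}\,V(\varphi^0)$ explicitly and then track the two operations $e(\alpha)\mapsto e(d\alpha)$ and raising to the power $r/d$.

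First I will check that Theorem \ref{th:char-loop} applies to $\varphi^0$. By construction, $\varphi^0(\alpha^\vee\otimes t^n)=\sum_{i=1}^\nu a_i^n f_{\alpha,i}(n)$, with $a_1,\dots,a_\nu$ pairwise distinct. Hence the decomposition \eqref{eq:exp-poly} of $\varphi^0$ has polynomial parts exactly $\varphi_1,\dots,\varphi_\nu$, after discarding any $\varphi_i=0$; this is harmless, since such $\varphi_i$ contribute trivially. Nonsingularity depends only on these polynomial parts, so $\varphi^0$ is nonsingular because $\varphi$ is, as noted before the statement.

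Theorem \ref{th:char-loop} then gives
\[
\mathrm{char}\,V(\varphi^0)=e(\varphi^0|_\fh)\prod_{i=1}^{\nu}\prod_{j=1}^{N(\varphi_i)+1}\prod_{\alpha\in\hat\Delta_j^{i,-}(\varphi^0)}(1-e(\alpha))^{-j}.
\]
Replacing $e(\alpha)$ by $e(d\alpha)$ turns each factor into $(1-e(d\alpha))^{-j}$ and the prefactor into $e(d\varphi^0|_\fh)$. Raising to the power $r/d$ turns the factors into $(1-e(d\alpha))^{-jr/d}$ and the prefactor into $e(r\varphi^0|_\fh)$.

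The one point needing care is the identification $e(r\varphi^0|_\fh)=e(\varphi|_\fh)$. Evaluating at $n=0$, we have $\varphi^j|_\fh=\varphi^0|_\fh$ for every $j$, since $(\varepsilon^j a_i)^0=1$. Therefore $\varphi|_\fh=\sum_{j=0}^{r-1}\varphi^j|_\fh=r\varphi^0|_\fh$. This prefactor is independent of $d$, so it factors out of both sums.

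Substituting into Proposition \ref{pr:char-affine} yields exactly the stated formula. I expect no real obstacle here; the only bookkeeping is the highest-weight shift, which is consistent with the convention $\varphi|_\fh-\alpha/d$ used in the proof of Proposition \ref{pr:char-affine}.
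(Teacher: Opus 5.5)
Your proposal is correct and follows the paper's route exactly. The paper obtains the theorem by substituting Theorem~\ref{th:char-loop}, applied to $\varphi^0$, into Proposition~\ref{pr:char-affine}, and your check that $r\,\varphi^0|_\fh=\varphi|_\fh$ (so that $e(d\varphi^0|_\fh)^{r/d}=e(\varphi|_\fh)$ factors out) supplies the bookkeeping that the paper leaves implicit.
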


\section{Example: $\mathfrak{sl}_3(\C)$ case}\label{sec:8}
Throughout this section, assume that $\mathfrak{g}=\mathfrak{sl}_3(\C)$, the complex simple Lie algebra of type $A_2$.
We apply our results to this case, and determine the characters of all irreducible Harish-Chandra $\widetilde{\mathfrak{g}}$-modules in Chari's category $\widetilde{\mathcal{O}}$. 

For $1\leq i,j\leq 3$, denote by $E_{i,j}$ the $3\times 3$ matrix whose only nonzero entry is the $(i,j)$-entry which is $1$.
Fix the Cartan subalgebra \[\mathfrak{h}=\C h_1\oplus \C h_2\] of $\mathfrak{g}$, where $h_i:=E_{i,i}-E_{i+1,i+1}$, $i=1,2$. 
Then 
\begin{align*}
    \Delta=\{\pm \alpha_1,~\pm \alpha_2,\pm (\alpha_1+\alpha_2)\}
\end{align*}
is the root system of $\fg$ with respect to $\fh$, where $\alpha_1,\alpha_2\in\fh^\ast$ satisfy
\begin{align*}
    \alpha_1(h_1)=\alpha_2(h_2)=2,\qquad \alpha_1(h_2)=\alpha_2(h_1)=-1.
\end{align*}
Note that  $h_1,~h_2$ are  the coroots of $\alpha_1,\alpha_2$, respectively. 
Take $\Pi=\{\alpha_1,\alpha_2\}$, and the corresponding triangular decomposition of $\fg$ is given by 
$\fg=\fn^-\oplus\fh\oplus \fn^+$ with 
\[\fn^+=\C E_{1,2}\oplus \C E_{1,3}\oplus \C E_{2,3}\quad \te{and}\quad \fn^-=\C E_{2,1}\oplus \C E_{3,1}\oplus \C E_{3,2}.\]
For convenience, we set \[\alpha_0:=\alpha_1+\alpha_2.\]

By Proposition \ref{fexp}, every irreducible Harish-Chandra $\widetilde{\mathfrak{g}}$-module in Chari's category $\widetilde{\mathcal{O}}$ is of  the form $\widetilde{V}(\varphi,b)$ for some $(\varphi,b)\in\mathcal{E}\times \mathbb{C}$. In view of Propositions \ref{pr:char-affine} and \ref{pr:pbpichi}, the computation of their characters reduces to that of irreducible highest weight modules over Takiff algebras associated with $\mathfrak{g}$. We therefore first determine
the character formula of the irreducible $\mathcal{C}_N\g$-module $V_{\CC_N\b}(\chi)$ when $\chi$ is not nonsingular, while the nonsingular case is obtained in Theorem \ref{th:chartak}.

Let $0\neq \chi\in(\CC_N\fh)^\ast$ be such that 
\begin{align}\label{chigeneiue}
\chi(h_1(N))\neq 0\quad \te{or}\quad \chi(h_2(N))\neq 0.
\end{align}
Recall from Subsection \ref{sec:chartak} that for $i=1,2$,
\[\chi_i:=\chi_{\alpha_i}=\te{max}\{j+1\mid 0\leq j\leq N,~\chi(h_i(j))\neq 0\}\]
with the convention that $\chi_{\alpha_i}=0$ if $\chi(h_i(j))=0$ for all $j=0,1,\dots,N$. Similarly, define
\[
\chi_0
:=
\max\{j+1\mid 0\leq j\leq N,\ 
\chi((h_1+h_2)(j))\neq0\},
\]
with the same convention. Then \eqref{chigeneiue} implies that $\te{max}\{\chi_1,\chi_2\}=N+1,$
and hence the associated $(N+2)$-partition $\bm\Pi(\chi)$ of $\Pi$ is genuine. We also have 
\[\fh_{\bm\Pi(\chi)}=\sum_{j=0}^{\chi_1-1}\C h_1(j)+\sum_{j=0}^{\chi_2-1}\C h_2(j).\]
For later use, we set 
\[c_{\chi,1}=\chi(h_1(0))+1,\quad c_{\chi,2}=\chi(h_2(0))+1,\quad \te{and}\quad c_{\chi,0}=\chi((h_1+h_2)(0))+2.\]
For $i=0,1,2$, set
\begin{align*}
    \theta_{\chi,i}:=\begin{cases}
        1,&\mbox{if }c_{\chi,i}\in\Z_{+},\\
        0,&\mbox{otherwise}.
    \end{cases}
\end{align*}

When $N=0$, we identify $\CC_N\fg$ with $\fg$. The following character formula holds for $V_{\CC_0\b}(\chi)$ whether $\chi$ is 
nonsingular  or not (cf.~\cite{H1}):
\begin{align}
 \textrm{char\,} V_{\CC_N\b}(\chi)
= &\frac{e(\chi)}{D(\alpha_1,\alpha_2)}\cdot \left(1-\theta_{\chi,1}e(-c_{\chi,1}\alpha_1)-\theta_{\chi,2}e(-c_{\chi,2}\alpha_2)-\theta_{\chi,0}e(-c_{\chi,0}\alpha_0)\right.\notag\\ 
&\left.+\theta_{\chi,1}\theta_{\chi,0}e(-c_{\chi,1}\alpha_1
-c_{\chi,0}\alpha_2)+\theta_{\chi,2}\theta_{\chi,0}e(-c_{\chi,2}\alpha_2
-c_{\chi,0}\alpha_1)\right).
\end{align}
Here, \[D(\alpha_1,\alpha_2):={(1-e(-\alpha_1))(1-e(-\alpha_2))(1-e(-\alpha_0))}.\]

Now assume that $N>0$. Set 
 \[m_\chi:=\chi_0+\delta_{\chi_0,0}-\delta_{\chi_0,1}\delta_{\chi((h_1+h_2)(0)),-N-1}.\]
 By the definition of nonsingularity,  $\chi|_{\fh_{\bm\Pi(\chi)}}$ is not nonsingular if and only if  one of the following conditions holds:
\begin{itemize}
    \item[(S1)] $\chi_1=1$ and $c_{\chi,1}\in\mathbb{Z}_{+}$;
    \item[(S2)] $\chi_2=1$ and $c_{\chi,2}\in\mathbb{Z}_{+}$;
    \item[(S3)] $\chi_1=\chi_2 = N + 1$ and $\chi_0\leq N$.
\end{itemize}
To derive a character formula in these cases, we need to characterize the maximal proper $\mathcal{C}_N\fg$-submodule of the parabolically
induced module
$
M_{\CP_{\bm\Pi(\chi)}}
\bigl(\chi|_{\fh_{\bm\Pi(\chi)}}\bigr)$.
To this end, we introduce the set
\[S_{\chi}:=\{E_{2,1}(0)^{c_{\chi,1}}v_\chi\}\quad(\te{resp}.\;S_{\chi}:=\{E_{3,2}(0)^{c_{\chi,2}}v_\chi\})\]
whenever $\chi$ satisfies (S1) (resp.\;(S2)), as well as   \begin{align*}
    S_\chi:=\left\{\left(\sum_{i=s}^{N}\chi(h_2(N+s-i))E_{3,1}(i)-\sum_{\substack{0\leq i,j\leq N,\\ i+j=N+s}}E_{2,1}(i)E_{3,2}(j)\right)v_\chi\mid m_\chi\leq s\leq N\right\}
\end{align*}
 whenever $\chi$ satisfies (S3). 
 One easily verifies that every element in $S_\chi$ is annihilated by $\CC_N\fn^+$.

Furthermore, we have (cf. \cite{HLN})
\begin{lemt}\label{irrchi}
The parabolically induced $\CC_N\fg$-module $M_{\mathcal{P}_{\bm\Pi(\chi)}}(\chi|_{\fh_{\bm\Pi(\chi)}})$ is reducible if and only if $\chi$ satisfies $\mathrm{(S1)}$, $\mathrm{(S2)}$, or $\mathrm{(S3)}$, in which case the maximal proper submodule is generated by the set $S_\chi$.
\end{lemt}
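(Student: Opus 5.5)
The irreducibility half is Theorem~\ref{thm:main} applied to the genuine partition $\bm\Pi(\chi)$ of type $A_2$, so we only need to check that (S1)--(S3) exhaust the ways $\chi|_{\fh_{\bm\Pi(\chi)}}$ can fail to be nonsingular. By \eqref{chigeneiue}, at least one of $\chi_1,\chi_2$ equals $N+1$. If $\chi_1=N+1>\chi_2$, then $\alpha_2\in\Pi_{\chi_2}$ and $\alpha_1,\alpha_0\in\hat\Delta_{N+1}^+$. The genericity conditions are then $\chi(h_1(N))\ne0$ and $\chi((h_1+h_2)(N))=\chi(h_1(N))\neq 0$, together with $\chi(h_2(\chi_2-1))\neq 0$ when $\chi_2\ge2$. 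All of these hold automatically. So the only possible failure is condition (J) for $\chi_1$, which matters only when $\chi_2=1$. In that case $\bar\fg_1\cong\mathfrak{sl}_2$ with $\Pi_0=\emptyset$, and (J) fails exactly when $c_{\chi,2}\in\Z_+$, which is (S2). The case $\chi_2=N+1>\chi_1$ gives (S1) symmetrically. If $\chi_1=\chi_2=N+1$, only $\alpha_0$ adds a new condition, namely $\chi((h_1+h_2)(N))\ne0$, that is $\chi_0=N+1$; its failure is (S3). For $N\ge1$, the case $\chi_1=\chi_2=1$ is excluded.

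For the description of the maximal submodule, I would first verify the stated claim that each element of $S_\chi$ is a singular vector. In (S1) and (S2) this reduces to the $\mathfrak{sl}_2$ computation, using that $E_{2,1}(0)$ commutes with $x_\beta(k)$ for $\beta\in\{\alpha_2\}$ up to terms killing $v_\chi$, as in \eqref{eq:singu}. In (S3) it is a direct computation with $E_{1,2}(k)$ and $E_{2,3}(k)$, using that $E_{2,1}(i)$, $E_{3,2}(j)$ and $E_{3,1}(i)$ act freely only in degrees $\le N$. Consequently $\langle S_\chi\rangle$ is a proper submodule.

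The main step is to show that the quotient $M/\langle S_\chi\rangle$ is irreducible. I would do this by a character comparison. First compute a PBW-type spanning set of the quotient, using $S_\chi$ to eliminate certain monomials:
\begin{itemize}
\item in (S1), powers $E_{2,1}(0)^{\ge c_{\chi,1}}$;
\item in (S3), the monomials $E_{3,1}(N)\cdot E_{3,1}(s)$ for $s\ge m_\chi$, or rather the leading $E_{2,1}(i)E_{3,2}(j)$ terms with $i+j=N+s$.
\end{itemize}
Then prove that the Shapovalov form is nondegenerate on this span. For that I would imitate Proposition~\ref{prop:destoi-1}: show that every nonzero vector of the quotient outside $\C v_\chi$ is moved upward by some $x_\alpha(k)$. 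The hardest part is case (S3), where the index $m_\chi$, including the boundary correction when $\chi((h_1+h_2)(0))=-N-1$, must be matched precisely. There I would first treat $N=1$ explicitly and then argue by induction on $N$ through the filtration by $t$-degree, peeling off the top degree $E_{3,1}(N)$ and reducing to the $\CC_{N-1}$ situation, where a leading-term argument like Proposition~\ref{pr:l1=l2} applies. Since the statement credits \cite{HLN}, the final fallback for (S3) is to invoke their rank-two computation.
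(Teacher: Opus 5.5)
Your reducibility half is correct and is exactly the paper's reasoning. The paper lists (S1)--(S3) by unwinding the definition of nonsingularity for $\bm\Pi(\chi)$ and then invokes Theorem~\ref{thm:main}. Your case analysis is right on both branches:
\begin{itemize}
\item for $\chi_1=N+1>\chi_2$, genericity is automatic, and only (J) for the $\mathfrak{sl}_2$ attached to $\alpha_2$ can fail, which happens when $\chi_2=1$;
\item for $\chi_1=\chi_2=N+1$, the only new condition is $\chi((h_1+h_2)(N))\neq0$.
\end{itemize}

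For the maximal submodule, the paper offers only the remark that $S_\chi$ consists of singular vectors and the reference to \cite{HLN}, so your fallback is the paper's route. Your own argument for (S1)/(S2) is sound and goes beyond what the paper writes:
\begin{itemize}
\item $\langle S_\chi\rangle$ is spanned by the PBW monomials with $k_{1,0}\ge c_{\chi,1}$ (resp.\ the analogue for $\alpha_2$);
\item the quotient is induced from the finite-dimensional simple quotient of the $\mathfrak{sl}_2$-Verma module;
\item Lemma~\ref{le:basiccomm} still holds for vectors of that quotient;
\item both roots of $\hat\Delta^+_{N+1}$ are generic.
\end{itemize}
Hence the proofs of Proposition~\ref{prop:destoi-1} and Theorem~\ref{th:mainirr} go through with $W_\chi(\bar{\mathfrak g}_1)$ replaced by its simple quotient.

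For (S3), however, what you sketch is not a proof, and the mechanism breaks exactly where (S3) differs from the nonsingular case.
\begin{itemize}
\item In Proposition~\ref{prop:destoi-1} the root $\alpha_d$ must be minimal for $\succeq$. Whenever some $E_{3,1}(k)$ survives in the quotient ($k<m_\chi$), you are forced to take $\alpha_d=-\alpha_0$, whose leading coefficient is $\chi((h_1+h_2)(N))=0$.
\item Taking $-\alpha_1$ or $-\alpha_2$ instead reintroduces the same-degree cross terms of Lemma~\ref{fmaj}, e.g.\ $[E_{1,2}(N-k),E_{3,1}(k)]=-E_{3,2}(N)$. Excluding these is precisely what minimality was for.
\item Using $E_{1,3}(j)$ with $j+k<N$ instead produces commutators of $t$-degree below $N$, which no longer kill $v_\chi$.
\end{itemize}
Moreover, any such argument must be run modulo $\langle S_\chi\rangle$. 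You would first need to know that your spanning set of the quotient is a basis, and you do not address this.

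Your description of that spanning set is also off. The relations eliminate the generators $E_{3,1}(s)$, $s\ge m_\chi$, modulo quadratic terms $E_{2,1}(i)E_{3,2}(j)$ (the paper's $\mathbf e_s$). They do not eliminate products $E_{3,1}(N)E_{3,1}(s)$. Since $E_{3,1}(N)$ does not survive in the quotient, ``peeling it off'' in an induction on $N$ has no clear meaning.

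The boundary term in $m_\chi$ also shows that the dependence on $N$ is not a simple truncation. For $\chi_0\le1$, $E_{1,2}(0)$ sends the $s=0$ element of the form in $S_\chi$ to $-(\chi((h_1+h_2)(0))+N+1)E_{3,2}(N)v_\chi$, a $\rho$-type shift depending on $N$. So for (S3) your proposal, like the paper, rests entirely on \cite{HLN}.
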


 Assume that $\chi|_{\fh_{\bm\Pi(\chi)}}$ is not nonsingular. By Lemma \ref{irrchi}, 
 \[V_{\CC_N\mathfrak{b}}(\chi)\cong M_{\mathcal{P}_{\bm\Pi(\chi)}}(\chi|_{\fh_{\bm\Pi(\chi)}})/ \langle S_\chi\rangle\]
 as $\CC_N\fg$-modules.
 Here,
 $\langle S_\chi \rangle$ denotes the  $\CC_N\mathfrak{g}$-submodule of $M_{\mathcal{P}_{\bm\Pi(\chi)}}(\chi|_{\fh_{\bm\Pi(\chi)}})$ generated by  $S_\chi$.
 We now use this realization to compute 
 its character formula.  
 
 Define a subspace of $\CC_N\fg$ as follows:
 \begin{align*}
     \fn_\chi:=\begin{cases}
         \sum_{k=0}^N\C E_{3,2}(k)+\sum_{k=0}^N\C E_{3,1}(k),&\mbox{if }\chi\mbox{ satisfies (S1),}\\
         \sum_{k=0}^N\C E_{2,1}(k)+\sum_{k=0}^N\C E_{3,1}(k),&\mbox{if }\chi\mbox{ satisfies (S2),}\\
         \sum_{k=0}^N\C E_{2,1}(k)+\sum_{k=0}^N\C E_{3,2}(k)+\sum_{k=0}^{m_\chi-1}\C E_{3,1}(k),&\mbox{if }\chi\mbox{ satisfies (S3).}
     \end{cases}
 \end{align*}
Since $\fn_\chi$ is invariant under the adjoint action of $\fh$, it induces  a natural $\fh$-module structure on its symmetric algebra $S(\fn_\chi)$.

 When $\chi$ satisfies (S1), by
 \eqref{eq:basispara},  a basis of $M_{\mathcal{P}_{\bm\Pi(\chi)}}(\chi|_{\fh_{\bm\Pi(\chi)}})$ is given by  
\begin{align}\label{base1}
 \left\{   \prod_{s=0}^N(E_{3,1}(s)^{k_{3,s}}E_{3,2}(s)^{k_{2,s}})E_{2,1}(0)^{k_{1,0}}v_\chi\mid k_{1,0},k_{2,s},k_{3,s}\in\mathbb{N},~0\leq s\leq N\right\}.
\end{align} 
It is straightforward to see that $\langle S_\chi \rangle$ is spanned by the elements in \eqref{base1} with $k_{1,0}\geq c_{\chi,1}$ in this case. Hence, there is an $\fh$-module isomorphism 
\begin{align}\label{sing1}
    V_{\CC_N\mathfrak{b}}(\chi) \cong \oplus_{k=0}^{c_{\chi,1}-1}\left(\C_{\chi|_{\fh}-k\alpha_1}\otimes S(\fn_\chi)\right).
\end{align} 
When $\chi$ satisfies (S2), an argument  similar to that  for (S1) gives 
\begin{align}\label{sing2}
    V_{\CC_N\mathfrak{b}}(\chi) \cong \oplus_{k=0}^{c_{\chi,2}-1}\left(\C_{\chi|_{\fh}-k\alpha_2}\otimes S(\fn_\chi)\right).
\end{align}
Here, for every $\mu\in\fh^\ast$, $\C_\mu:=\C v_\mu$ denotes the one-dimensional $\fh$-module such that
\begin{align*}
    h.v_\mu=\mu(h)v_\mu\quad \te{for }h\in\fh.
\end{align*}

When $\chi$ satisfies (S3), we have $\mathcal{P}_{\bm\Pi(\chi)}=\CC_N\mathfrak{b}$ 
and hence  $M_{\mathcal{P}_{\bm\Pi(\chi)}}(\chi|_{\fh_{\bm\Pi(\chi)}})$ is  the Verma $\CC_N\fg$-module of highest weight $\chi$. Note that 
\[\langle S_\chi \rangle=\sum_{s=m_\chi}^N \mathcal{U}(\CC_N\fn^-){\bf{e}}_sv_\chi,\]
where 
\[{\bf{e}}_s=E_{3,1}(s)-\sum_{\substack{0\leq i,j\leq N,\\ i+j\geq N+s}}c_{i,j}E_{2,1}(i)E_{3,2}(j)\]
for some $c_{i,j}\in\mathbb{C}$. It follows that
\begin{align}\label{sing3}
   M_{\mathcal{P}_{\bm\Pi(\chi)}}(\chi|_{\fh_{\bm\Pi(\chi)}})/\langle S_\chi\rangle &\cong \C_{\chi|_{\fh}}\otimes \mathcal{U}(\CC_N\fn^-)/\left(\sum_{s=m_\chi}^N \mathcal{U}(\CC_N\fn^-)\bf{e}_s\right)\notag\\
   &\cong \C_{\chi|_{\fh}}\otimes S(\fn_\chi)
\end{align}
as $\fh$-modules.

\begin{prpt}
Let $N\geq 1$ and let $\chi\in(\CC_N\fh)^\ast$ be such that \begin{align}
\chi(h_1(N))\neq 0\quad \te{or}\quad \chi(h_2(N))\neq 0.
\end{align}
 Then 
    \begin{align*}
    \mathrm{char\,}V_{\CC_N\mathfrak{b}}(\chi)=\frac{e(\chi|_{\fh})(1-\delta_{\chi_1,1}\theta_{\chi,1}e(-c_{\chi,1}\alpha_1)-\delta_{\chi_2,1}\theta_{\chi,2}e(-c_{\chi,2}\alpha_2))}{(1-e(-\alpha_1))^{\chi_1}(1-e(-\alpha_2))^{\chi_2}(1-e(-\alpha_0))^{m_\chi}}.
    \end{align*}
\end{prpt}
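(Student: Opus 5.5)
We split into two cases according to whether $\chi|_{\fh_{\bm\Pi(\chi)}}$ is nonsingular, and in each case compute the $\fh$-character of an explicit model of $V_{\CC_N\b}(\chi)$.

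\emph{Nonsingular case.} Here Theorem \ref{th:chartak} applies. Since $\max\{\chi_1,\chi_2\}=N+1$, we have $M=N$, and $\bm\Pi(\chi)$ places $\alpha_1$ in block $\chi_1$ and $\alpha_2$ in block $\chi_2$. The set $\hat\Delta_i(\chi)^-$ then contains $-\alpha_1$ for $i=\chi_1$, contains $-\alpha_2$ for $i=\chi_2$, and contains $-\alpha_0$ for $i=\max\{\chi_1,\chi_2\}=N+1$. The formula of Theorem \ref{th:chartak} therefore gives exponents $\chi_1,\chi_2,N+1$ on $(1-e(-\alpha_1))$, $(1-e(-\alpha_2))$, $(1-e(-\alpha_0))$. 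To match the claimed formula, we must check that $m_\chi=N+1$ whenever (S1)--(S3) all fail, and that the numerator correction terms vanish.
\begin{itemize}
\item If $\chi_1=\chi_2=N+1$, failure of (S3) forces $\chi_0=N+1$. Then $m_\chi=N+1$, because the extra correction terms $\delta_{\chi_0,0}$ and $\delta_{\chi_0,1}$ vanish when $N\ge1$.
\item If exactly one of $\chi_1,\chi_2$ equals $N+1$, say $\chi_1=N+1>\chi_2$, then $\chi((h_1+h_2)(N))=\chi(h_1(N))\neq0$, so again $\chi_0=N+1=m_\chi$.
\item In either subcase, the numerator terms $\delta_{\chi_i,1}\theta_{\chi,i}$ vanish: either $\chi_i\ne1$, or $\chi_i=1$ and failure of (S$i$) gives $\theta_{\chi,i}=0$.
\end{itemize}

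\emph{Singular cases.} We use Lemma \ref{irrchi} together with the $\fh$-isomorphisms \eqref{sing1}, \eqref{sing2}, \eqref{sing3}.
\begin{itemize}
\item \textbf{(S1).} By \eqref{sing1}, the character is $e(\chi|_\fh)\bigl(\sum_{k=0}^{c_{\chi,1}-1}e(-k\alpha_1)\bigr)\operatorname{char}S(\fn_\chi)$. The sum equals $(1-e(-c_{\chi,1}\alpha_1))/(1-e(-\alpha_1))$, and $\operatorname{char}S(\fn_\chi)=(1-e(-\alpha_2))^{-(N+1)}(1-e(-\alpha_0))^{-(N+1)}$. Since $\chi_1=1$, we need $\chi_2=N+1$; as above $\chi_0=N+1=m_\chi$; and $\delta_{\chi_2,1}=0$ because $N\ge1$. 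This matches the formula.
\item \textbf{(S2).} This is symmetric to (S1).
\item \textbf{(S3).} Here $\chi_1=\chi_2=N+1$, so no numerator terms appear, and \eqref{sing3} gives $e(\chi|_\fh)(1-e(-\alpha_1))^{-(N+1)}(1-e(-\alpha_2))^{-(N+1)}(1-e(-\alpha_0))^{-m_\chi}$, because $\fn_\chi$ contains $m_\chi$ copies of $E_{3,1}$.
\end{itemize}

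\emph{Expected obstacle.} The main difficulty is the bookkeeping around $m_\chi$ and the overlapping conditions. We must check that (S1) and (S2) cannot hold simultaneously when $N\ge1$, since that would require $\chi_1=\chi_2=1$ and hence $N=0$. We also must confirm that the convention in $m_\chi$, namely the $\delta_{\chi_0,1}\delta_{\chi((h_1+h_2)(0)),-N-1}$ term, matches exactly the range $m_\chi\le s\le N$ used to define $S_\chi$. With this range, $\langle S_\chi\rangle$ removes precisely $E_{3,1}(s)$ for $s\ge m_\chi$ in \eqref{sing3}.

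Relying on Lemma \ref{irrchi} as stated, the remaining steps are routine geometric-series identities.
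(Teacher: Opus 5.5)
Your proposal is correct and follows the paper's proof. You split according to whether $\chi|_{\fh_{\bm\Pi(\chi)}}$ is nonsingular, apply Theorem~\ref{th:chartak} in the nonsingular case, and read off the characters from \eqref{sing1}--\eqref{sing3} in cases (S1)--(S3). Your extra bookkeeping makes explicit what the paper's two-line proof leaves implicit: that $m_\chi=N+1$ and the numerator corrections vanish in the nonsingular case, that $\chi_0=N+1$ when only one of $\chi_1,\chi_2$ equals $N+1$, and that (S1) and (S2) cannot hold together when $N\ge1$.
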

\begin{proof}
    When $\chi|_{\fh_{\bm\Pi(\chi)}}$ is nonsingular, Theorem \ref{th:chartak} gives that 
\begin{align*}
    \mathrm{char\,}V_{\CC_N\mathfrak{b}}(\chi)=e(\chi|_{\fh})(1-e(-\alpha_1))^{-\chi_1}(1-e(-\alpha_2))^{-\chi_2}(1-e(-\alpha_0))^{-N-1}.
\end{align*}
Then the assertion follows immediately since  $m_\chi=N+1$ in this case. When $\chi|_{\fh_{\bm\Pi(\chi)}}$ is not nonsingular, the assertion follows from \eqref{sing1}-\eqref{sing3}.
\end{proof}

We now apply the preceding character formulas to the irreducible Harish-Chandra $\widetilde{\mathfrak g}$-modules in Chari's category $\widetilde{\mathcal{O}}$. Let $b\in\C$ and $\varphi\in\mathcal E$.
If $\varphi=0$, then $\widetilde V(0,b)$ is one-dimensional and
\[
\operatorname{char}\widetilde V(0,b)=e(b\delta).
\]
Henceforth, assume that $0\neq\varphi\in\mathcal E$ and
$A_\varphi=r\Z$ for some $r\in\Z_+$ with decomposition \[\varphi=\varphi^0+\varphi^1+\dots+\varphi^{r-1}\] as in \eqref{eq:decomvarphi}. Here, $\varphi^0$ is an exp-polynomial linear functional on $\CL\fh$ such that there exist distinct nonzero complex numbers $a_1,a_2,\dots,a_{\nu}$ with $(a_p/a_q)^r\neq 1$ for $1\leq p\neq q\leq \nu$, and $f_{\alpha_1,j}(t),f_{\alpha_2,j}(t)\in\C[t]$ for $1\leq j\leq \nu$ satisfying 
\begin{align*}
    \varphi^0(h_i\otimes t^n)=\sum_{j=1}^\nu a_j^nf_{\alpha_i,j}(n)\quad \te{for }i=1,2,~n\in\Z.
\end{align*}
As in Subsection \ref{sec:charloop}, we have 
\[N_j=\te{max} \{\deg f_{\alpha_1,j}(t),~\deg f_{\alpha_2,j}(t)\}\quad \]
 and  characters $\bar{\varphi}_{j}$, $1\leq j\leq \nu$, on $\CC_{N_j}\fh$
such that 
\begin{align*}
\bar{\varphi}_{j}(h_i(k))=\sum_{k'=0}^k(-1)^{k-k'}\binom{k}{k'}f_{\alpha_i,j}(k')\quad \te{for } i=1,2,~0\leq k\leq N_j.
\end{align*}
For every $j=1,2,\dots,\nu$, the character $\bar{\varphi}_{j}$ satisfies 
\[\bar{\varphi}_{j}(h_1(N_j))\neq 0\quad \te{or}\quad \bar{\varphi}_{j}(h_2(N_j))\neq 0.\]

Set 
\[Z:=\{1\leq j\leq \nu\mid N_j=0\},\quad \hat{Z}:=\{1\leq j\leq \nu\mid N_j\neq 0\},\]
and write $|Z|$ for the cardinality of $Z$.
By \eqref{char:loop-tak}, we have 
\begin{align}\label{charsl3}
    &\mathrm{char}\, V(\varphi^0)\notag\\
    =\ &  \left(\prod_{j\in Z} \mathrm{char}\, V_{\CC_{N_j}\mathfrak{b}}(\bar\varphi_j)\right)\left(\prod_{j\in\hat{Z}}\te{char}\, V_{\CC_{N_j}\mathfrak{b}}(\bar\varphi_j)\right)\notag\\
    =\ &e(\varphi^0|_{\fh})(1-e(-\alpha_0))^{-|Z|-\sum_{j\in\hat{Z}}m_{{\bar\varphi}_j}} \left(\prod_{i=1}^2((1-e(-\alpha_i))^{-|Z|-\sum_{j\in\hat{Z}}({\bar\varphi}_j)_{\alpha_i}})\right)\notag\\
&\cdot \prod_{j\in Z}\left(1+\theta_{{\bar\varphi}_j,0}\sum_{i=1}^2\theta_{{\bar\varphi}_j,i}e(-c_{\bar\varphi_j,0}\alpha_0+(c_{\bar\varphi_j,0}-c_{\bar\varphi_j,i})\alpha_i)-\sum_{i=0}^2\theta_{{\bar\varphi}_j,i}e(-c_{{\bar\varphi}_j,i}\alpha_i)\right)\notag\\
&\cdot \left(\prod_{j\in \hat{Z}}(1-\sum_{i=1}^2\delta_{({\bar\varphi}_j)_{\alpha_i},1}\theta_{{\bar\varphi}_j,i}e(-c_{{\bar\varphi}_j,i}\alpha_i)\right).
\end{align}

Combining \eqref{charsl3} with Proposition \ref{pr:char-affine},
we obtain the following character formula.
\begin{thmt}
 Let $b\in\C$ and  let $ 0 \neq \varphi\in\mathcal E$ satisfy
$A_\varphi=r\Z$ for some $r\in\Z_+$. Assume that $\varphi$ has the decomposition \eqref{eq:decomvarphi}.  Then 
\begin{align*}
\mathrm{char}\,\wt{V}(\varphi,b)\ =\ \frac{1}{r}\sum_{n\in\Z}\sum_{d\mid r}c_{d}(\bar{n})(\mathrm{char}\,V(\varphi^0,d))^{\frac{r}{d}}e((-n+b)\delta),
\end{align*}
where $\mathrm{char}\,V(\varphi^0,d)$ is obtained from $\mathrm{char}\,V(\varphi^0)$ by replacing $e(\alpha)$ by $e(d\alpha)$ for every $\alpha\in\fh^\ast$ and $\mathrm{char}\,V(\varphi^0)$ is given by \eqref{charsl3}. 
\end{thmt}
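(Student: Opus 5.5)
The formula follows by feeding the explicit description \eqref{charsl3} of $\mathrm{char}\,V(\varphi^0)$ into Proposition \ref{pr:char-affine}. The main work is checking that \eqref{charsl3} is valid for every $0\neq\varphi\in\mathcal E$, with no nonsingularity assumption. Proposition \ref{pr:char-affine} is stated for arbitrary $\varphi\in\mathcal E$ with $A_\varphi=r\Z$; it needs nothing about singularity. It gives $\mathrm{char}\,\wt V(\varphi,b)$ in terms of $\mathrm{char}\,V(\varphi^0)$ via the Ramanujan sums $c_d(\bar n)$. The statement is therefore reduced to justifying \eqref{charsl3}.

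To justify \eqref{charsl3}, I first apply Proposition \ref{tensor} to $\varphi^0=\sum_j a_j^n\varphi_j$ and use that twisting by $a_j$ does not change the $\fh$-module structure. Proposition \ref{pr:pbpichi} then gives \eqref{char:loop-tak}, namely $\mathrm{char}\,V(\varphi^0)=\prod_{j}\mathrm{char}\,V_{\CC_{N_j}\b}(\bar\varphi_j)$. I check that each $\bar\varphi_j$ satisfies $\bar\varphi_j(h_1(N_j))\neq0$ or $\bar\varphi_j(h_2(N_j))\neq0$. This holds because $\bar\varphi_j(h_i(k))$ is the $k$-th finite difference of $f_{\alpha_i,j}$ at $0$, which by \cite[Eq.~(3.7)]{CGT} is nonzero for $k=\deg f_{\alpha_i,j}$ and vanishes above it.

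I then split the factors by $N_j$. For $j\in Z$ ($N_j=0$), $V_{\CC_0\b}(\bar\varphi_j)$ is an irreducible highest weight $\fg$-module, and its character is the $N=0$ formula quoted from \cite{H1}. For $j\in\hat Z$, the preceding Proposition applies and gives the factor $(1-e(-\alpha_1))^{-\chi_1}(1-e(-\alpha_2))^{-\chi_2}(1-e(-\alpha_0))^{-m_\chi}$ with the $\delta\theta$ numerator. Multiplying these factors and collecting exponents gives \eqref{charsl3}. For the $Z$-factors, I will rewrite the numerator of the \cite{H1} formula using $c_{\chi,0}=c_{\chi,1}+c_{\chi,2}$. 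With this, $-c_{\chi,1}\alpha_1-c_{\chi,0}\alpha_2=-c_{\chi,0}\alpha_0+(c_{\chi,0}-c_{\chi,1})\alpha_1$, and the analogous identity holds for the other term. This matches the displayed product.

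Finally I substitute $e(\alpha)\mapsto e(d\alpha)$ and raise to the power $r/d$, exactly as in Proposition \ref{pr:char-affine}, which yields the stated formula. The step I expect to be the main obstacle is the exponent bookkeeping in \eqref{charsl3}. I must confirm that the exponent of $(1-e(-\alpha_i))$ from $\hat Z$ is $(\bar\varphi_j)_{\alpha_i}$ and that of $(1-e(-\alpha_0))$ is $m_{\bar\varphi_j}$. This includes the nonsingular case, where $m_\chi=N+1$ and the numerator correction vanishes; that case is handled by the case analysis in the preceding Proposition together with Lemma \ref{irrchi}.
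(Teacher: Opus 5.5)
Your proposal is correct and follows the paper's route. The paper obtains \eqref{charsl3} from \eqref{char:loop-tak}, using the \cite{H1} formula for the factors with $N_j=0$ and the preceding Proposition for those with $N_j\ge 1$, and then substitutes the result into Proposition \ref{pr:char-affine}. One minor point: the rewriting $-c_{\chi,1}\alpha_1-c_{\chi,0}\alpha_2=-c_{\chi,0}\alpha_0+(c_{\chi,0}-c_{\chi,1})\alpha_1$ follows from $\alpha_0=\alpha_1+\alpha_2$ alone, so you do not need $c_{\chi,0}=c_{\chi,1}+c_{\chi,2}$ at that step.
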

\section*{Acknowledgment}
Fulin Chen was supported by the Natural Science Foundation of Xiamen, China
(Grant No. 3502Z202473005), the National Natural Science Foundation of China (Grant
No. 12471029), and the Natural Science Foundation of Fujian Province, China (Grant No. 2026J001042). Xin Huang was supported by the National Natural Science Foundation of China (Grant No. 12526548).

ChatGPT (GPT-6 Astra) was used as an editorial aid to refine the language of this manuscript. Its use was limited to language refinement.

\end{document}